\numberwithin{equation}{section}
\newtheorem{thm}{Theorem}[section]
\newtheorem{theorem}[thm]{Theorem}
\newtheorem{conjecture}[thm]{Conjecture}
\newtheorem{lemma}[thm]{Lemma}
\newtheorem{proposition}[thm]{Proposition}
\title{On the Constants of the Lang-Trotter Conjecture for CM Elliptic Curves}
\date{}
\author{Anish Ray}
\begin{document}
	\maketitle	\textbf{Abstract.}
	 In $2021$, Daqing Wan and Ping Xi \cite{wan2021langtrotter} studied the equivalence of the Lang-Trotter conjecture for CM elliptic curves and the Hardy-Littlewood conjecture for primes represented by a quadratic polynomial \cite{Hardy1923SomePO}. Wan and Xi provided an alternative description of the Lang-Trotter Conjecture under the Hardy-Littlewood Conjecture. They obtained an explicit constant $\overline{\omega}_{E,r}$ in the asymptotics of the Lang-Trotter Conjecture. They further conjectured that this particular constant would be equal to the constant $C_{E,r}$ in the asymptotics of the original Lang-Trotter Conjecture \cite{lang2006frobenius}. In this paper, we verify the same for $20$ CM elliptic curves, which also establishes the equivalence of the Lang-Trotter Conjecture and the Hardy-Littlewood Conjecture with respect to $r$, for these CM elliptic curves.
	\vspace{0.2in}
	\begin{center}
	\section{\textbf{Introduction}}\label{sec:1}
	\end{center}
	Let $E/\mathbb{Q}$ be an elliptic curve with complex multiplication by an order $\mathcal{O}$ of an imaginary quadratic number field $K$ described by the Weierstrass equation
	\begin{equation}\label{eq:(1.1)}
		y^2=x^3+ax+b,\quad a,b\in\mathbb{Z}.
	\end{equation}
	Denote by $f_E\subset\mathcal{O}_K$ the conductor ideal of $E$, $|f_E|\in\mathbb{N}$ the conductor of $E$, and $\triangle_E=-16(4a^3+27b^2)$ the discriminant of $E$. For any prime $p\nmid |f_E|$ of good reduction, let $E_p$ denote the reduction of $E$ at $\mathbb{F}_p$, and let
	\begin{equation}
		a_p(E)=p+1-|E_p(\mathbb{F}_p)|
	\end{equation}
	denote the trace of the Frobenius endomorphism at $p$.\\
	\\
	In $1976$, Serge Lang and Hale Trotter \cite{lang2006frobenius}, formulated a precise conjecture on the number of primes $p$ of good reduction such that $a_p(E)$ takes a fixed integer value (say) $r\in\mathbb{Z}$.\\
	\begin{conjecture}[The Lang-Trotter Conjecture for CM Elliptic curves,\cite{lang2006frobenius}]\label{con:1.1}
		Let us define $$\pi_{E,r}(x):=|\{p\leq x: a_p(E)=r, p\nmid |f_E|\}|.$$ We assume $r\neq 0$ for CM elliptic curves $E/\mathbb{Q}$. Then,
		\begin{equation}
			\pi_{E,r}(x)\sim C_{E,r}\frac{\sqrt{x}}{\log x},
		\end{equation}
		where the Lang-Trotter constant $C_{E,r}\ge 0$ is described in terms of the Galois representations associated with $E$. When $C_{E,r}=0$, it means that there are only finitely many primes for which $a_p(E)=r$.
	\end{conjecture}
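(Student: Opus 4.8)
The plan is to prove the asymptotic by first converting the trace condition $a_p(E)=r$ into an arithmetic condition inside the CM field $K$, and then carrying out the resulting prime count analytically. By the theory of complex multiplication (Deuring, Hecke) there is a Hecke Grossencharacter $\psi$ of $K$ of infinity-type one and conductor dividing $N_E$ with $L(E/\mathbb{Q},s)=L(\psi,s)$; equivalently, for every prime $p\nmid N_E$ that splits in $K$, say $p=\mathfrak{p}\overline{\mathfrak{p}}$, one has $a_p(E)=\psi(\mathfrak{p})+\overline{\psi(\mathfrak{p})}$, while for $p$ inert in $K$ one has $a_p(E)=0$. Since $r\neq 0$, I would first dispose of the inert primes (they never contribute) and reduce to counting degree-one primes $\mathfrak{p}$ of $K$ with $\psi(\mathfrak{p})+\overline{\psi(\mathfrak{p})}=r$.

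Next I would parametrize these primes. Writing $d=|d_K|$ for the absolute value of the discriminant of $K$, the value $\alpha:=\psi(\mathfrak{p})$ is a normalized generator of $\mathfrak{p}$ with $N(\alpha)=p$ and $\alpha+\overline{\alpha}=r$; hence $\alpha=\tfrac12(r+s\sqrt{-d})$ for an integer $s$ determined by $\mathfrak{p}$, and taking norms gives the single Diophantine relation $4p=r^2+d\,s^2$. Consequently $p=Q(s)$, where $Q(T)=\tfrac14(r^2+d\,T^2)$ is an irreducible quadratic polynomial in one variable, and, after accounting for the finitely many congruence conditions on $s$ that pin down the correct ideal class, conductor, and value of $\psi$ (and for the two-to-one correspondence $s\leftrightarrow\mathfrak{p},\overline{\mathfrak{p}}$), I obtain
\[
\pi_{E,r}(x)=\tfrac12\bigl|\{\, s : Q(s)\le x,\ Q(s)\text{ prime},\ s\equiv s_0\ (\mathrm{mod}\ m)\,\}\bigr|+O(1)
\]
for an explicit modulus $m$ and a set of admissible residues $s_0$. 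The problem is thereby identified with counting prime values of a fixed quadratic polynomial along an arithmetic progression.

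To extract the main term I would weight by the prime-detecting von Mangoldt function and compare with the expected density: for $s\asymp\sqrt{x}$ one has $Q(s)\asymp x$, so $\log Q(s)\sim\log x$; the admissible $s$ range over $\asymp\sqrt{x}$ values; and the local densities assemble into a convergent singular series $\mathfrak{S}(Q)$. Partial summation of the heuristic density $\mathfrak{S}(Q)/\log Q(s)$ then formally yields $\pi_{E,r}(x)\sim C_{E,r}\,\sqrt{x}/\log x$, with $C_{E,r}$ built from $\mathfrak{S}(Q)$, from $d$, and from the density of admissible residues. This is precisely the predicted shape, and the constant it produces is the quantity $\overline{\omega}_{E,r}$ arising on the Hardy-Littlewood side.

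The hard part is the passage from this heuristic main term to a rigorous asymptotic. The relation $p=Q(s)$ expresses $p$ as a value of an irreducible one-variable quadratic, and proving that such a polynomial represents infinitely many primes with the correct density is exactly the (open) Hardy-Littlewood conjecture for quadratic polynomials, which already contains statements such as the infinitude of primes of the form $s^2+1$. With current technology one can establish unconditionally only the two flanking results: the Selberg (or large) sieve gives the upper bound $\pi_{E,r}(x)\ll\sqrt{x}/\log x$ of the correct order, and Iwaniec's half-dimensional sieve shows that $Q(s)$ is a product of at most two primes infinitely often, so $\pi_{E,r}(x)$ is not eventually constant. A matching lower bound with the exact constant lies beyond present sieve methods; accordingly the full asymptotic would be obtained conditionally on the Hardy-Littlewood conjecture, and the remaining content — verifying that the constant it supplies equals the Galois-theoretic $C_{E,r}$ — is exactly what the explicit computation for the twenty CM curves is designed to furnish.
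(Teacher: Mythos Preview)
The statement you are attempting to prove is a \emph{conjecture}, not a theorem: the paper states Conjecture~1.1 (the Lang--Trotter Conjecture for CM elliptic curves) without proof, and indeed no unconditional proof exists. So there is no ``paper's own proof'' to compare your proposal against. Your write-up correctly recognizes this: you reduce the trace condition via Deuring's lemma to the representability of primes by a fixed quadratic polynomial, and then explicitly acknowledge that the required asymptotic for prime values of $Q(s)$ is exactly the open Hardy--Littlewood conjecture. What you have produced is therefore not a proof of Conjecture~1.1 but a heuristic derivation together with a conditional reduction, which is precisely the state of the art.

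That said, your sketch accurately mirrors the logical architecture of the paper. The reduction you describe is the content of Lemma~1.3 and Theorem~1.4 (Wan--Xi), which assert that Hardy--Littlewood implies $\pi_{E,r}(x)\sim\overline{\omega}_{E,r}\sqrt{x}/\log x$; the ``remaining content'' you mention---matching the Hardy--Littlewood constant $\overline{\omega}_{E,r}$ with the Galois-theoretic constant $C_{E,r}$---is exactly Conjecture~1.5, and it is this matching (not Conjecture~1.1 itself) that the paper verifies for the twenty curves. So your proposal is a correct conditional outline, but you should be clear that you are not proving the conjecture: you are explaining why it is equivalent, up to the constant, to Hardy--Littlewood, and deferring the constant identification to the paper's explicit computations.
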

As interpreted by [\cite{Jones2009-yf} Section 2.2, Page 693 (described in detail in section \ref{sec:2.4})],
\begin{equation*}
	C_{E,r}=\frac{m_E}{2\pi}\frac{|(\operatorname{Gal}(K(E[m_E])/K))_r|}{|\operatorname{Gal}(K(E[m_E])/K)|}\prod_{\substack{\ell\nmid m_E\\ \ell\mid r}}\frac{\ell}{\ell-\chi\mathcal{O}(\ell)}\prod_{\substack{\ell\nmid m_E\\ \ell\nmid r}}\frac{\ell^2-(1+\chi\mathcal{O}(\ell))\ell}{(\ell-1)(\ell-\chi\mathcal{O}(\ell))}.
\end{equation*}
\begin{conjecture}[The Hardy-Littlewood Conjecture for Primes represented by Quadratic Polynomials (\cite{Hardy1923SomePO}Conjecture F, Page no. $48$)]\label{con:1.2}
	Let $a,b,c\in\mathbb{Z}$ such that $a>0$, $b^2-4ac$ is not a square and $\gcd(a,b,c)=1$ and also assume that $a+b$ and $c$ are not simultaneously even. Then there exist infinitely many primes of the form $am^2+bm+c,\, m\in\mathbb{N}$. Moreover, for primes $p$, if we denote
	\begin{equation*}
		\pi_{a,b,c}(x):=|\{p\leq x:p=am^2+bm+c, m\in\mathbb{N},\text{ with }a,b,c\text{ as given above}\}|,
	\end{equation*}
	then
	\begin{equation}
		\pi_{a,b,c}(x)\sim\frac{\gcd(2,a+b)\delta}{\sqrt{a}\phi(\delta)}\prod_{p\nmid2a}\left(1-\frac{\left(\frac{b^2-4ac}{p}\right)}{p-1}\right)\frac{\sqrt{x}}{\log x},
	\end{equation}
	where $\delta$ denotes the odd part of $\gcd(a,b)$, $\phi$ is the Euler's totient function, and $\left(\frac{.}{p}\right)$ denotes the Legendre symbol.
\end{conjecture}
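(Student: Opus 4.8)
\medskip
\noindent\emph{Proof proposal.} Conjecture~\ref{con:1.2} is Hardy and Littlewood's ``Conjecture~F'' and is a well-known open problem: even the qualitative assertion that a single irreducible quadratic such as $m^{2}+1$ represents infinitely many primes is unknown. Accordingly the plan below separates into the part that can be carried out rigorously — the evaluation of the asserted constant as a convergent singular series — and the asymptotic itself, which I do not expect to be accessible.

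For the constant, write $M=\sqrt{x/a}$ so that $Q(m):=am^{2}+bm+c$ ranges over integers of size $\asymp x$ as $m$ ranges over $[1,M]$. The Cram\'er-type heuristic models ``$Q(m)$ prime'' by probability $1/\log Q(m)\sim 1/\log x$ and then corrects the local statistics prime by prime, producing the singular series
\[
\mathfrak{S}=\prod_{p}\frac{1-\rho_{Q}(p)/p}{1-1/p},\qquad \rho_{Q}(p)=\#\{m\bmod p:\ p\mid Q(m)\}.
\]
The substance is the case analysis of $\rho_{Q}(p)$. If $p\mid\delta$ (so $p$ is odd and divides $\gcd(a,b)$) then $\gcd(a,b,c)=1$ forces $p\nmid c$, hence $\rho_{Q}(p)=0$ and these primes contribute $\prod_{p\mid\delta}\frac{p}{p-1}=\delta/\phi(\delta)$. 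If $p\nmid 2a$ then $Q$ is a genuine quadratic modulo $p$ with $\rho_{Q}(p)=1+\left(\frac{b^{2}-4ac}{p}\right)$, contributing $\prod_{p\nmid 2a}\bigl(1-\left(\tfrac{b^{2}-4ac}{p}\right)/(p-1)\bigr)$. The remaining odd primes $p\mid a$ with $p\nmid\delta$ satisfy $p\nmid b$, so $Q$ is linear and nonconstant modulo $p$ and $\rho_{Q}(p)=1$, contributing a factor $1$; and $p=2$ contributes exactly $\gcd(2,a+b)$ once the hypothesis ``$a+b$ and $c$ not simultaneously even'' is used. Multiplying by $M/\log x=\sqrt{x/a}\,/\log x$ reassembles the asserted constant. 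This bookkeeping is routine once the case division is in place, and the hypotheses $\gcd(a,b,c)=1$, $b^{2}-4ac$ not a square, and the parity condition are precisely what guarantee that $\mathfrak{S}$ converges to a nonzero value (if $b^{2}-4ac$ were a square the associated character would be principal and the product would diverge to $0$, reflecting reducibility of $Q$).

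To upgrade the heuristic to a theorem one would run the circle method or a sieve on the sequence $\{Q(m):m\le M\}$. A Selberg (or Brun) upper-bound sieve already gives, unconditionally, $\pi_{a,b,c}(x)\ll \sqrt{x}/\log x$ of the correct order of magnitude, with the implied constant a bounded multiple of the one above. The matching lower bound — equivalently, the infinitude of primes of the form $Q(m)$ — is the genuine obstacle: the values $Q(m)$ have density only $\asymp x^{-1/2}$ in $[1,x]$, far too sparse for any known sieve to detect primes, and the parity problem prevents a linear sieve from separating primes from products of two primes in this range. Thus I expect the singular-series computation to go through cleanly while the asymptotic formula remains out of reach of current technology; realistically the only deliverables that accompany the statement are the Selberg-sieve upper bound and the verification that $\mathfrak{S}$ is well defined and nonzero under the stated hypotheses.
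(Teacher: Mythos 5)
This statement is Hardy and Littlewood's Conjecture F, which the paper merely quotes (with a citation) as an input hypothesis; it offers no proof, and indeed none exists. You correctly recognized this, and your decision not to claim a proof but instead to (i) derive the constant heuristically as a singular series and (ii) delimit what is provable unconditionally is the right call. Your local-factor bookkeeping is also correct: for odd $p\mid\gcd(a,b)$ the hypothesis $\gcd(a,b,c)=1$ gives $\rho_Q(p)=0$ and the factor $p/(p-1)$, yielding $\delta/\phi(\delta)$; for $p\nmid 2a$ one has $\rho_Q(p)=1+\left(\frac{b^2-4ac}{p}\right)$ and the factor $\bigl(1-\rho_Q(p)/p\bigr)/\bigl(1-1/p\bigr)=1-\left(\frac{b^2-4ac}{p}\right)/(p-1)$; odd $p\mid a$ with $p\nmid b$ give factor $1$; and the prime $2$ contributes $\gcd(2,a+b)$ exactly under the parity hypothesis ($Q(m)\equiv(a+b)m+c\pmod 2$). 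Multiplying by $\sqrt{x/a}/\log x$ reassembles the stated constant, and your remarks on the role of ``$b^2-4ac$ not a square'' (reducibility of $Q$, degeneracy of the series) and on the Selberg/Brun upper bound versus the inaccessible lower bound are accurate. In short: there is nothing in the paper to compare against, your treatment is honest about the conjectural status, and the heuristic derivation of the constant is sound.
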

\begin{lemma}[Deuring, \cite{Deuring1941}]\label{lem:1.3}
	Let $E/\mathbb{Q}$ be an elliptic curve with complex mltiplication by $\mathcal{O}_K$ the ring of integers of an imaginary quadratic field $K=\mathbb{Q}(\sqrt{-D})$. Let $p$ be a prime where $E$ has good reduction. If $p$ is inert in $\mathcal{O}_K$, then $a_p(E)=0$. If $p=\pi\overline{\pi}$ splits in $\mathcal{O}_K$, then $a_p(E)=\pi+\overline{\pi}$, where $\pi\in\mathcal{O}_K$ so that the endomorphism $[\pi]$ induces the $p$-th power Frobenius automorphism of the reduction $E$ modulo $p$.
\end{lemma}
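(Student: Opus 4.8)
This lemma is classical (due to Deuring, 1941), so I will only outline how I would reconstruct it from the reduction theory of CM elliptic curves; the two displayed cases correspond to the two ways a rational prime can behave in $\mathcal{O}_K$ (recall that $E/\mathbb{Q}$ with CM by $\mathcal{O}_K$ forces $K$ to have class number one, and the relevant endomorphisms are the geometric ones, defined over $K$). My starting point would be that, since $E$ has good reduction at $p$, the reduction map $\operatorname{End}(E)\hookrightarrow\operatorname{End}(E_p)$ on (geometric) endomorphism rings is an injective ring homomorphism that preserves degrees of isogenies, so $E_p$ inherits an action of $\mathcal{O}_K$. I would then invoke Deuring's dichotomy --- which I would cite rather than reprove (e.g.\ from Lang's \emph{Elliptic Functions} or Silverman's \emph{Advanced Topics in the Arithmetic of Elliptic Curves}) --- namely that $E_p$ is supersingular exactly when $p$ does not split in $K$, and ordinary exactly when $p$ splits. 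The proof then divides along these two cases.

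\emph{The inert case.} Here $E_p$ is a supersingular elliptic curve over the prime field $\mathbb{F}_p$. The $p$-power Frobenius $\phi_p\in\operatorname{End}(E_p)$ satisfies its characteristic equation $\phi_p^{2}-a_p(E)\phi_p+p=0$, and supersingularity over $\mathbb{F}_p$ is equivalent to $a_p(E)\equiv 0\pmod p$. Since the Hasse bound gives $|a_p(E)|\le 2\sqrt{p}$, for every $p\ge 5$ one gets $|a_p(E)|<p$ and hence $a_p(E)=0$; the two remaining primes $p=2,3$ --- which for the curves under consideration either reduce badly or can be settled by direct computation --- I would dispose of by hand.

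\emph{The split case.} Write $p\mathcal{O}_K=\pi\overline{\pi}$ with $\pi,\overline{\pi}$ non-associate primes. Now $E_p$ is ordinary, so $\operatorname{End}(E_p)$ is an order of $K$ containing the image of $\operatorname{End}(E)=\mathcal{O}_K$; as $\mathcal{O}_K$ is the maximal order this forces $\operatorname{End}(E_p)=\mathcal{O}_K$ and makes the reduction map $\operatorname{End}(E)\xrightarrow{\ \sim\ }\operatorname{End}(E_p)$ an isomorphism of rings. Hence the Frobenius $\phi_p$ lifts uniquely to an endomorphism of $E$ which, under the CM identification $\operatorname{End}(E)\cong\mathcal{O}_K$, is $[\pi]$ for a uniquely determined $\pi\in\mathcal{O}_K$ --- exactly the $\pi$ in the statement. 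Because reduction preserves degrees and the dual isogeny corresponds to complex conjugation on $K$, one has $\widehat{\phi_p}=\overline{\phi_p}$ and $N_{K/\mathbb{Q}}(\pi)=\deg\phi_p=p$, so $\pi$ generates a prime above $p$ and
\begin{equation*}
	a_p(E)=\operatorname{tr}\phi_p=\phi_p+\widehat{\phi_p}=\pi+\overline{\pi}.
\end{equation*}

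\emph{Where the real work is.} The substantive inputs are Deuring's supersingular/ordinary criterion underlying the case split, and the bookkeeping needed to make the phrase ``$[\pi]$ induces the $p$-th power Frobenius'' unambiguous --- that is, fixing the normalization of $\operatorname{End}(E)\cong\mathcal{O}_K$ (the one compatible with the action on the invariant differential) so that $\pi$, rather than $\overline{\pi}$, is the element singled out. Both belong to the standard CM package and I would quote them directly; everything else above is the Hasse bound together with elementary arithmetic in the imaginary quadratic order $\mathcal{O}_K$.
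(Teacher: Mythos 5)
The paper does not prove this lemma at all: it is quoted as a classical result of Deuring with a citation, so there is no internal argument to compare against. Your reconstruction is the standard one (reduction of the CM endomorphisms, Deuring's ordinary/supersingular criterion, and in the split case the identification $\operatorname{End}(E)\cong\operatorname{End}(E_p)\cong\mathcal{O}_K$ with $\phi_p=[\pi]$, $\pi\overline{\pi}=p$, $a_p=\operatorname{tr}\phi_p=\pi+\overline{\pi}$), and the split case as you present it is complete modulo the quoted CM facts, including the normalization remark about which of $\pi,\overline{\pi}$ corresponds to Frobenius.

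The one genuine soft spot is the inert case at $p\in\{2,3\}$. Your argument there is ``supersingular $\Rightarrow p\mid a_p$, Hasse $\Rightarrow |a_p|<p$,'' which only works for $p\ge 5$; for $p=2,3$ the values $a_p=\pm p$ are not excluded by Hasse (and such isogeny classes do exist over $\mathbb{F}_2,\mathbb{F}_3$ by Waterhouse). Waving this away ``by direct computation for the curves under consideration'' does not settle the lemma as stated, since it applies to arbitrary $E/\mathbb{Q}$ with CM by $\mathcal{O}_K$ and each $\overline{\mathbb{Q}}$-isomorphism class has infinitely many twists, possibly with good reduction at $2$ or $3$. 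The clean way to close it uniformly in $p$ is to use that, for inert $p$, the CM endomorphisms reduce to endomorphisms defined over $\mathbb{F}_{p^2}$ and the $p$-power Frobenius conjugates them by the nontrivial element of $\operatorname{Gal}(K/\mathbb{Q})$, i.e.\ $\phi_p\,\tilde{\alpha}=\tilde{\overline{\alpha}}\,\phi_p$ in the quaternion algebra $\operatorname{End}^0(E_p)$; taking $\alpha\in\mathcal{O}_K$ purely imaginary and writing $\phi_p=\tfrac{a_p}{2}+v$ with $v$ a pure quaternion, the anticommutation forces $a_p\tilde{\alpha}$ to be a scalar, hence $a_p=0$. (Equivalently, one may quote the Hecke-character description of $L(E/\mathbb{Q},s)$, which gives $a_p=0$ at inert primes of good reduction for every $p$.) With that replacement, your outline is a faithful proof of the cited lemma.
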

Using this lemma we are ready to relate Conjecture \ref{con:1.1} to primes represented by quadratic polynomials. We consider two cases:\\
\textbf{Case 1.} Let $D\equiv1,2\pmod4$. If $p=\pi\overline{\pi}$ splits in $\mathcal{O}_K$ with $\pi=m+n\sqrt{-D}\in\mathcal{O}_K$, $m,n\in\mathbb{Z}$ then one obtains $p=(r/2)^2+Dn^2$ for $a_p(E)=2m=r$.\\
\textbf{Case 2.} Let $D\equiv 3\pmod4$. If $p=\pi\overline{\pi}$ splits in $\mathcal{O}_K$ with $\pi=m+n\frac{1+\sqrt{-D}}{2}\in\mathcal{O}_K$, $m,n\in\mathbb{Z}$, then one obtains $p=(D+1)(r/2)^2-Drm+Dm^2$, where $a_p(E)=2m+n=r$.\\
Using Lemma \ref{lem:1.3} Wan and Xi described the constant in the asymptotics of the Lang-Trotter Conjecture without interpreting the images of the Galois representations. Rather they used the orthogonality of characters and standard methods from analytic number theory. For details, one may refer to Chapters $4$, $5$, and $6$ of \cite{wan2021langtrotter}.
\begin{theorem}[Wan-Xi, \cite{wan2021langtrotter}, Theorem $1.4$]\label{thm:1.4}
	Let $r$ be a fixed non-zero integer and $E/\mathbb{Q}$ be an elliptic curve with CM by the imaginary quadratic field $\mathbb{Q}(\sqrt{-D})$ with squarefree $D\ge1$. The Hardy-Littlewood conjecture implies
	\begin{equation}
		\pi_{E,r}(x)\sim\overline{\omega}_{E,r}\frac{\sqrt{x}}{\log x},
	\end{equation}
	where $\overline{\omega}_{E,r}\ge0$ is some explicit constant defined using analytic number theory.
\end{theorem}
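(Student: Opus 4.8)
The plan is to rewrite $\pi_{E,r}(x)$ as a count of primes represented by an explicit one-variable quadratic polynomial and then feed this into Conjecture~\ref{con:1.2}. First I would split the primes $p\le x$ of good reduction according to their splitting type in $\mathcal{O}_K$: by Lemma~\ref{lem:1.3} an inert prime has $a_p(E)=0$, which is excluded since $r\neq0$, so only split primes $p=\pi\overline{\pi}$ contribute. For such $p$, Deuring's lemma gives $a_p(E)=\pi+\overline{\pi}$ for the generator $\pi$ of the Frobenius prime ideal; this $\pi$ is the value of the Hecke character attached to $E$, hence is pinned down, up to conjugation and the finitely many units of $\mathcal{O}_K$, by a congruence modulo the conductor $\mathfrak f$ of that character. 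So $p$ is counted by $\pi_{E,r}(x)$ precisely when it admits a generator $\pi$ of trace $r$ lying in the prescribed residue class(es) modulo $\mathfrak f$.

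Next I would invoke the two cases recorded above. Writing $\pi$ in an integral basis of $\mathcal{O}_K$: in \textbf{Case 1} ($D\equiv1,2\pmod4$) the trace condition forces $m=r/2$, so if $r$ is odd there are no such primes and one sets $\overline{\omega}_{E,r}=0$, while otherwise $p=(r/2)^2+Dn^2$; in \textbf{Case 2} ($D\equiv3\pmod4$) one gets $p=Dm^2-Drm+(D+1)(r/2)^2$. In both cases the Hecke-character congruence becomes a condition of the shape $v\equiv v_0\pmod{M}$ on the free integer variable $v$ ($=n$ in Case 1, $=m$ in Case 2), for an explicit modulus $M=M(\mathfrak f)$ and a finite admissible set $S$ of residues into which the $\pi\leftrightarrow\overline{\pi}$ ambiguity is folded. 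Thus $\kappa\cdot\pi_{E,r}(x)=\sum_{v_0\in S}\#\{p\le x:\ p=Q_r(v),\ v\equiv v_0\pmod{M}\}+O(1)$, where $Q_r$ is the quadratic polynomial above, $\kappa$ is the fixed number of normalized generators of trace $r$ produced per prime, and the $O(1)$ absorbs ramified primes.

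For each $v_0\in S$ I would substitute $v=Mt+v_0$ to obtain $\widetilde Q_{v_0}(t):=Q_r(Mt+v_0)\in\mathbb{Z}[t]$, a quadratic with positive leading coefficient and negative discriminant, hence not a square; the remaining hypotheses of Conjecture~\ref{con:1.2} (coprimality of the coefficients and the parity condition) either hold or fail only in finitely many degenerate instances contributing $O(1)$, and can be arranged by dividing out a bounded content. Conjecture~\ref{con:1.2} then yields $\#\{p\le x:\ p=\widetilde Q_{v_0}(t)\}\sim c_{v_0}\,\sqrt{x}/\log x$ with $c_{v_0}$ the associated Hardy--Littlewood singular series. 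Summing over the finitely many $v_0\in S$ (only one of the two cases occurs for a given $E$) and collecting the constants gives $\pi_{E,r}(x)\sim\overline{\omega}_{E,r}\,\sqrt{x}/\log x$, where $\overline{\omega}_{E,r}=\kappa^{-1}\sum_{v_0\in S}c_{v_0}\ge0$ is a finite explicit sum of singular series, vanishing exactly when $S=\varnothing$; note that this is a priori of a quite different shape from the Galois-theoretic $C_{E,r}$, which is why its identification is nontrivial.

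The step I expect to be the main obstacle is the arithmetic bookkeeping in the first two paragraphs: extracting the precise modulus $M$ and residue set $S$ from the conductor of the Hecke character, and counting correctly how many normalized generators of a split prime ideal have trace exactly $r$ — this is where the larger unit groups for $D=1$ and $D=3$ (the curves with $j=1728$ and $j=0$) demand extra care. The analytic part is by contrast routine: only finitely many polynomials $\widetilde Q_{v_0}$ appear, so no uniform version of Conjecture~\ref{con:1.2} is needed and all error terms are genuinely $o(\sqrt{x}/\log x)$. This is precisely the computation carried out in Chapters 4--6 of \cite{wan2021langtrotter}, which produces the explicit constant $\overline{\omega}_{E,r}$ whose comparison with $C_{E,r}$ is the subject of the present paper.
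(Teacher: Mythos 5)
Your sketch follows essentially the same route as the proof this paper relies on: Theorem \ref{thm:1.4} is not proved here but quoted from Wan--Xi \cite{wan2021langtrotter}, whose Chapters 4--6 carry out exactly your reduction — Deuring's Lemma \ref{lem:1.3} to discard inert primes, the fixed-trace parametrizations of Cases 1 and 2, normalization of the Frobenius generator (handled there via orthogonality of quartic/sextic and quadratic residue characters, which is the character-sum incarnation of your Hecke-character congruence conditions), and finally the Hardy--Littlewood conjecture applied to the finitely many resulting quadratic progressions. So the proposal is correct in approach and consistent with the cited argument; the unit-group and normalization bookkeeping you flag (especially for $D=1,3$) is indeed where the bulk of Wan--Xi's case analysis lies and where the explicit shape of $\overline{\omega}_{E,r}$ in Theorems \ref{thm:2.1} and \ref{thm:2.2} comes from.
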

And, they proposed the following conjecture:
\begin{conjecture}[Wan-Xi, \cite{wan2021langtrotter}, Conjecture $1.5$]\label{con:1.5}
	Let $E/\mathbb{Q}$ be a CM elliptic curve and $r\neq 0$ be a fixed integer. Then $\overline{\omega}_{E,r}=C_{E,r}$.
\end{conjecture}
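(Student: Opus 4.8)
The plan is to verify $\overline{\omega}_{E,r}=C_{E,r}$ for each of the $20$ curves by computing the two constants independently as explicit Euler products and matching them. Fix such a curve $E$ with CM by $\mathcal{O}_K$, $K=\mathbb{Q}(\sqrt{-D})$, and let $\psi_E$ be the associated Hecke character, of conductor $\mathfrak{f}_E$. For a good prime $p$ that splits as $\mathfrak{p}\overline{\mathfrak{p}}$ one has $a_p(E)=\psi_E(\mathfrak{p})+\overline{\psi_E(\mathfrak{p})}$ by Lemma~\ref{lem:1.3} and CM theory, and $\psi_E(\mathfrak{p})$ is the unique generator $\pi$ of $\mathfrak{p}$ lying in a fixed residue class modulo $\mathfrak{f}_E$. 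Hence $a_p(E)=r$ iff $p=N(\pi)$ with $\operatorname{Tr}(\pi)=r$ and $\pi$ in that class. Using the integral bases of Cases~1 and~2 of the excerpt, the admissible $\pi$ form an arithmetic progression $\{\pi_0+\lambda t:t\in\mathbb{Z}\}$ inside the line $\{\operatorname{Re}=r/2\}$ of the lattice $\mathcal{O}_K$, on which $N(\pi_0+\lambda t)$ is (after removing a common divisor) a quadratic $f(t)=at^2+bt+c$ satisfying the hypotheses of Conjecture~\ref{con:1.2}. Since $r\neq 0$ we have $\pi\neq\overline{\pi}$, so each prime counted by $\pi_{E,r}$ corresponds to exactly one value of $t$ (up to tracking one factor of $2$ coming from the involution $\pi\leftrightarrow\overline{\pi}$), giving $\pi_{E,r}(x)=\pi_{a,b,c}(x)+O(1)$; Conjecture~\ref{con:1.2} then yields
\[
\overline{\omega}_{E,r}=\frac{\gcd(2,a+b)\,\delta}{\sqrt a\,\phi(\delta)}\prod_{\ell\nmid 2a}\left(1-\frac{\left(\frac{b^2-4ac}{\ell}\right)}{\ell-1}\right),
\]
where $a$ is $D$ up to a square factor and $b^2-4ac$ is $-Dr^2$ up to a square factor, so that $\left(\frac{b^2-4ac}{\ell}\right)=\chi_{\mathcal{O}}(\ell)$ for all but finitely many $\ell$.

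For $C_{E,r}$ I would use the formula quoted above from \cite{Jones2009-yf} together with the fact that, for a CM curve, the mod-$m$ image of Galois over $K$ lies inside the Cartan subgroup of $\operatorname{GL}_2(\mathbb{Z}/m)$ attached to $\mathcal{O}_K$ (split at $\ell$ split in $K$, nonsplit otherwise), cut down by the congruences defining $\psi_E$. Concretely: take $m_E$ divisible by $\mathfrak{f}_E$, by the primes of bad reduction, and by $2$ and $3$; identify $\operatorname{Gal}(K(E[m_E])/K)$ with the resulting subgroup of $(\mathcal{O}_K/m_E\mathcal{O}_K)^\times$ and compute its order and the cardinality of the subset $(\cdot)_r$ of elements of trace $\equiv r\pmod{m_E}$; and evaluate the two displayed Euler products over $\ell\nmid m_E$ using $\chi_{\mathcal{O}}(\ell)=\left(\frac{-D}{\ell}\right)$. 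This expresses $C_{E,r}$ as an explicit rational multiple of $\frac{m_E}{2\pi}$ times an Euler product.

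The reconciliation is then arithmetic bookkeeping. The two Euler products do not agree factor by factor: a direct check, separately for $\chi_{\mathcal{O}}(\ell)=1$ and $\chi_{\mathcal{O}}(\ell)=-1$, shows that for $\ell\nmid m_E$ the Lang--Trotter local factor equals the Hardy--Littlewood local factor times $\ell/(\ell-\chi_{\mathcal{O}}(\ell))$, while for $\ell\mid r$ the Lang--Trotter factor is itself $\ell/(\ell-\chi_{\mathcal{O}}(\ell))$. Collecting these discrepancies, $C_{E,r}$ exceeds the Hardy--Littlewood shape exactly by $\prod_{\ell\nmid m_E}\bigl(1-\chi_{\mathcal{O}}(\ell)/\ell\bigr)^{-1}$, which is $L(1,\chi_K)$ with the Euler factors at $\ell\mid m_E$ deleted. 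Inserting the class number formula $L(1,\chi_K)=2\pi h_K/\bigl(w_K\sqrt{|d_K|}\,\bigr)$, with $h_K$ known for the fields occurring (equal to $1$ in the class-number-one cases), cancels the $1/(2\pi)$; matching $\sqrt{|d_K|}$ against $\sqrt{a}$ up to the explicit factors $2$, $w_K$, $h_K$, and checking that the finite prefactor $\tfrac{m_E}{2\pi}|(\cdot)_r|/|\cdot|$ equals $\tfrac{\gcd(2,a+b)\delta}{\sqrt a\,\phi(\delta)}$ times the Euler factors at $\ell\mid m_E$ omitted from the Hardy--Littlewood product (absorbing the factor $2$ from $\pi\leftrightarrow\overline{\pi}$), then yields $\overline{\omega}_{E,r}=C_{E,r}$.

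I expect the main obstacle — and the reason the verification is done curve by curve rather than uniformly — to be the determination of the exact conductor $\mathfrak{f}_E$, hence of $m_E$ and of the structure of $\operatorname{Gal}(K(E[m_E])/K)$ and its trace-$r$ subset, for each curve (the primes $2$, $3$ and those dividing $\triangle_E$ needing individual treatment), and then the matching of the local factors of $\overline{\omega}_{E,r}$ and $C_{E,r}$ at the finitely many ``bad'' primes $\ell\mid 2Dr\,m_E$, where neither the clean Cartan description nor the clean Hardy--Littlewood factor applies and both sides must be computed directly. A secondary subtlety is the reparametrization turning the congruence conditions modulo $\mathfrak{f}_E$ into a single quadratic polynomial with coefficients satisfying $\gcd(a,b,c)=1$, so that Conjecture~\ref{con:1.2} can be quoted verbatim.
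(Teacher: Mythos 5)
Your overall architecture matches the paper's: compute both constants explicitly and reconcile them through the special value $L(1,\chi_{-D})$ (the paper uses $L(1,\chi_4)=\pi/4$, $L(1,\chi_3)=\pi/(3\sqrt3)$, and $L(1,\chi)=\pi/\sqrt D$ for the remaining fields, exactly the class-number-formula step you describe). But there are two places where your plan has a real gap rather than just deferred bookkeeping. First, on the Hardy--Littlewood side you reduce $\pi_{E,r}(x)$ to a \emph{single} quadratic progression, claiming $\pi_{E,r}(x)=\pi_{a,b,c}(x)+O(1)$ ``up to one factor of $2$ from $\pi\leftrightarrow\overline\pi$.'' For $\mathbb{Q}(i)$ and $\mathbb{Q}(\sqrt{-3})$ the unit groups have order $4$ and $6$, and which associate of a generator of $\mathfrak p$ is the Frobenius is governed by quartic/sextic residue conditions depending on the twist of $E$; translating the Hecke-character congruence into lattice conditions produces several polynomials (equivalently, character sums), and this is precisely where the $\kappa(g,r)$, $\zeta_i(g,r)$, $\xi_D(g,r)$ factors in Wan--Xi's Theorems (quoted here as Theorems \ref{thm:2.1}, \ref{thm:2.2} and the two theorems of section \ref{sec:2.3}) come from. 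The paper does not redo this analysis; it quotes Wan--Xi's explicit $\overline{\omega}_{E,r}$, which is also what Conjecture \ref{con:1.5} refers to. If you re-derive your own Hardy--Littlewood constant by the single-polynomial shortcut, you have not verified that it is Wan--Xi's $\overline{\omega}_{E,r}$, and the shortcut itself is not correct in general.

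Second, and more importantly, on the Lang--Trotter side you correctly identify the determination of $m_E$, of $\operatorname{Gal}(K(E[m_E])/K)$ inside $(\mathcal{O}/m_E\mathcal{O})^\times$, and of its trace-$r$ fibers as ``the main obstacle,'' but you give no method for resolving it --- and this resolution is the actual content of the paper. One needs Lozano-Robledo's classification of CM Galois images (in particular whether the image over $K$ at the ramified prime is all of $(\mathcal{O}/\ell^n\mathcal{O})^\times$ or an index-$2$ subgroup, and \emph{which} index-$2$ subgroup, cf. Theorems \ref{thm:3.7}, \ref{thm:3.8}) together with Campagna--Pengo's linear-disjointness results (Theorems \ref{thm:4.1}, \ref{thm:4.3}) to split, e.g., $\operatorname{Gal}(K(E_2^*[12])/K)\cong\operatorname{Gal}(K(E_2^*[4])/K)\times\operatorname{Gal}(K(E_2^*[3])/K)$ and to pin down each factor. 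Without these inputs the ratio $|(\operatorname{Gal}(K(E[m_E])/K))_r|/|\operatorname{Gal}(K(E[m_E])/K)|$ is ambiguous up to factors of $2$, and even the set of residue classes of $r$ with $C_{E,r}\neq0$ (here $r\equiv2\pmod4$ for $E_1^*$, $r\equiv2,8\pmod{12}$ for $E_2^*$) cannot be determined, so the claimed matching with $\overline{\omega}_{E,r}$ cannot be carried out. So the proposal sketches the right comparison mechanism but omits the two technical pillars (Wan--Xi's explicit constants, or an equivalent complete derivation, and the explicit CM Galois-image computations) on which the verification actually rests.
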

Our goal is to verify the same for the following seven isogeny classes of CM elliptic curves along with two CM elliptic curves having CM from $\mathbb{Q}(\sqrt{-7})$, given by:
\begin{equation*}
	\text{Class }1:\begin{cases}
		E_1: y^2=x^3+4x, & \mathcal{O}=\mathbb{Z}[\sqrt{-1}]\\
		y^2=x^3-x, & \mathcal{O}=\mathbb{Z}[\sqrt{-1}]\\
		y^2=x^3-11x-14, & \mathcal{O}=\mathbb{Z}[\sqrt{-4}]\\
		E_1^*: y^2=x^3-11x+14, & \mathcal{O}=\mathbb{Z}[\sqrt{-4}],
	\end{cases},
\end{equation*}
\begin{equation*}
	\text{Class }2:\begin{cases}
		E_2: y^2=x^3+1, & \mathcal{O}=\mathbb{Z}\biggl[\frac{1+\sqrt{-3}}{2}\biggr]\\
		y^2=x^3-27, & \mathcal{O}=\mathbb{Z}\biggl[\frac{1+\sqrt{-3}}{2}\biggr]\\
		E_2^*: y^2=x^3-15x+22, & \mathcal{O}=\mathbb{Z}[\sqrt{-3}]\\
		y^2=x^3-135x-594 & \mathcal{O}=\mathbb{Z}[\sqrt{-3}]	
	\end{cases},
\end{equation*}
\begin{align*}
	E_3: y^2+xy=x^3-x^2-2x-1\\
	y^2+xy=x^3-x^2-107x+552
\end{align*}
\begin{equation*}
	\text{Class }3:\begin{cases}
		E_4: y^2+y=x^3-x^2-7x+10\\
		y^2+y=x^3-x^2-887-10143
	\end{cases},
\end{equation*}
\begin{equation*}
	\text{Class }4:\begin{cases}
		E_5: y^2+y=x^3-38x+90\\
		y^2+y=x^3-13718x-619025
	\end{cases},
\end{equation*}
\begin{equation*}
	\text{Class }5:\begin{cases}
		E_6: y^2+y=x^3-860x+9707\\
		y^2+y=x^3-1590140x-771794326
	\end{cases},
\end{equation*}
\begin{equation*}
	\text{Class }6:\begin{cases}
		E_7: y^2+y=x^3-7370x+243528\\
		y^2+y=x^3-3308930x-73244287055
	\end{cases},
\end{equation*}
and
\begin{equation*}
	\text{Class }7:\begin{cases}
		E_8: y^2+y=x^3-2174420x+1234136692\\
		y^2+y=x^3-57772164980x-5344733777551611
	\end{cases}
\end{equation*}
having complex multiplication from $\mathbb{Q}(\sqrt{-1})$, $\mathbb{Q}(\sqrt{-3})$, $\mathbb{Q}(\sqrt{-7})$, $\mathbb{Q}(\sqrt{-11})$, $\mathbb{Q}(\sqrt{-19})$, $\mathbb{Q}(\sqrt{-43})$, $\mathbb{Q}(\sqrt{-67})$, and $\mathbb{Q}(\sqrt{-163})$ respectively.
Our methodology is as follows:\\
\textbf{1}. We choose two curves from classes $1$ and $2$, namely, the curve $E_1: y^2=x^3+4x$, the curve $E_1^*:y^2=x^3-11x+14$, and the curve $E_2: y^2=x^3+1$, $E_2^*=y^2=x^3-15x+22$, respectively.\\
\textbf{2}. We compute the constant $\overline{\omega}_{E,r}$ as described in section \ref{sec:2.1} for $E_1$ and section \ref{sec:2.2} for $E_2$, respectively. Then, we compute the constant $C_{E,r}$ for $E_1^*$ and $E_2^*$. We will finally show that these two are equal, and using the fact that the number of $\mathbb{F}_q$-rational points (i.e., $\#E(\mathbb{F}_q)$) of two isogenous elliptic curves are equal, we will get the desired result.\\
\textbf{3}. Then we transform the remaining curves in the form $y^2=4x^3+ax+b$, where $a$ and $b$ are integers as given in section \ref{sec:2.3}. And, then we apply the same techniques as in the case of the curves in classes $1$ and $2$, to obtain the equality of the constants.\\  
\\
But to complete step $2$, we need to compute the constant $m_E$ for the respective curves. To find $m_E$, one has to know the Galois groups as mentioned in the formula of Jones\cite{Jones2009-yf}. For this, we will use the results of \cite{lozano2022galois} which are mentioned in section \ref{sec:3}, where one can obtain the image of the $p$-adic Galois representation $$\rho_{E,p^{\infty}}:\operatorname{Gal}(\overline{\mathbb{Q}}/\mathbb{Q})\to\operatorname{GL}_2(\mathbb{Z}_p)$$ for any prime $p\ge2$ up to conjugation.\\
\\
Eventually, in section \ref{sec:5} we will obtain $m_{E_1^*}=4$ and $m_{E_2^*}=12$, which leads us to the Galois group $\operatorname{Gal}(K(E_2^*[12])/K)$. Then from Galois Theory, we know that this group could be shown isomorphic to $\operatorname{Gal}(K(E_2^*[4])/K)\times\operatorname{Gal}(K(E_2^*[3])/K)$ if $K(E_2^*[4])\cap K(E_2^*[3])=K$. This is assured by Campagna and Pengo\cite{campagna2022entanglement}, where they have given a decomposition of these Galois groups in terms of direct products by studying the images of the natural inclusion
\begin{equation*}
	\operatorname{Gal}(F(E)_{tors}/F)\hookrightarrow\prod_p\operatorname{Gal}(F(E[p^{\infty}])/F),
\end{equation*} 
where $E/F$ is an elliptic curve with CM defined over a number field $F$. They further showed that such exact decomposition is only possible for $30$ isomorphic CM elliptic curves which are mentioned in Table $1$ \cite{campagna2022entanglement}.\\
\\
Using all such results, we would be able to extend the work of H. Qin \cite{qin2021}, who proved that the Lang-Trotter Conjecture \ref{con:1.1} and the Hardy-Littlewood Conjecture \ref{con:1.2} are equivalent for elliptic curves of the form $y^2=x^3+Dx$, $D\in\mathbb{Z}$. Moreover, we would also like to mention that Conjecture $1.5$ has not been verified before for any CM elliptic curve as described in \ref{eq:(1.1)}. We would also like to state that, although the application of Lemma \ref{lem:1.3} is sufficient to form the relation between Conjectures \ref{con:1.1} and \ref{con:1.2} for elliptic curves with CM as described in \ref{eq:(1.1)}, it is not sufficient for general CM elliptic curves since one have to compute such relations for several quadratic polynomials, which could be done but the process is cumbersome.\\
\\
\textbf{\emph{Acknowledgments:}}I would like to thank C. Deninger for suggesting this exciting problem, and for his valuable suggestions during the preparation of this manuscript.
 	\begin{center}
 	\section{\textbf{The constants $\overline{\omega}_{E,r}$ and $C_{E,r}$}}\label{sec:2}
 	\end{center}
 	We consider the curves $E/\mathbb{Q}$ which has CM by the maximal order and one can observe that the ring of integers $\mathcal{O}_K$ where $K=\mathbb{Q}(\sqrt{-D})$ can be one of the $9$ imaginary quadratic fields with class number $1$, namely:
 	\begin{equation*}
 		D\in\{1,2,3,7,11,19,43,67,163\},
 	\end{equation*}
 	since the $j$-invariant is a rational number if and only if $K$ has class number $1$. The non-maximal orders are of the form $\mathcal{O}=\mathbb{Z}+f\mathcal{O}_K$ where $f$ is the conductor of the order $\mathcal{O}$ and $f$ is always $1$ for $\mathcal{O}_K$. For a CM elliptic curve $E/\mathbb{Q}$, the conductor $f$ for the endomorphism ring or the order $\mathcal{O}$ is $1$ for all the $D$'s mentioned above but it can be also $2$ for $D=1,3,7$, and $3$ for $D=3$ which gives us total $13$ different $\overline{\mathbb{Q}}$-isomorphism classes of CM elliptic curves defined over $\mathbb{Q}$ (as given in \cite{silverman1994advanced}, page $483$), and each class has infinitely many family of twists of one elliptic curve over $\mathbb{Q}$. Considering [\cite{silverman1994advanced}, exercise $2.12$], it suffices to study only $9$ of these $13$ classes of CM elliptic curves having the full ring of integer of $K$ as the endomorphism ring.\\
 	\\
 	\subsection{The Constant $\overline{\omega}_{E,r}$ for curves having CM from $\mathbb{Q}(\sqrt{-1})$}\label{sec:2.1}
 	In this section, we will mention an explicit description of the constant $\overline{\omega}_{E,r}$ where $E/\mathbb{Q}$ has CM from $\mathbb{Q}(\sqrt{-1})$. Let $E/\mathbb{Q}$ be defined by
 	\begin{equation}\label{eq:(2.1)}
 		y^2=x^3-gx
 	\end{equation}
 	where $g=(-1)^{\delta}2^{\lambda}g_1$ with
 	\begin{itemize}
 		\item$\delta\in\{0,1\}$,
 		\item$\lambda\in\mathbb{N}\cup\{0\}$,
 		\item$2\nmid g_1\in\mathbb{N}$.
 	\end{itemize}
 	In the next few sections we will define a constant $\Omega_{j}$ which has been described in detail in Chapter $7$ of \cite{wan2021langtrotter}. For this section we define $\Omega_{j}(1;g_1;r):=\prod\limits_{\substack{p^{\nu}||g_1,p\nmid r\\j\nmid\nu}}\frac{-1}{p-1-\left(\frac{-1}{p}\right)}$, $j\in\{2,4\}$ and $\left(\frac{.}{p}\right)$ denotes the Legendre symbol for an odd prime $p$. We denote $\mathfrak{U}_1=\{\pm1,\pm i\}$, the group of roots of unity of $\mathbb{Z}[\sqrt{-1}]$. We would also like to mention that for a prime $p$ dividing an integer $n$, $p||n$ means $p^2\nmid n$, and $\operatorname{Re}(z)$ denotes the real part of $z\in\mathbb{C}$. Wan and Xi, mentioned a constant $h_{1,r}$ in the [\cite{wan2021langtrotter}, Theorem $4.1$], the definition of which could be found in [\cite{wan2021langtrotter}, chapter $2$, page $22(2.15)$] and [\cite{wan2021langtrotter}, page $19 (2.11)$] based on which we restate the theorem as follows:
 	\begin{theorem}[Wan-Xi, \cite{wan2021langtrotter}, Theorem $4.1$]\label{thm:2.1}
 		Suppose $E/\mathbb{Q}$ as defined in \ref{eq:(2.1)} is a CM elliptic curve. Then, for each integer $r\neq 0$, the Hardy-Littlewood conjecture implies that
 		\begin{equation}
 			\pi_{E,r}(x)\sim \overline{\omega}_{E,r}\frac{\sqrt{x}}{\log x},
 		\end{equation}
 		holds with $\overline{\omega}_{E,r}=\frac{1}{4}\kappa(g,r)\prod\limits_{p\nmid2r}\left(1-\frac{\left(\frac{-1}{p}\right)}{p-1}\right)$, where $2\mid r$,
 		and
 		\begin{equation*}
 			\kappa(g,r)=\begin{cases}
 				1-(-1)^{\frac{\lambda r}{4}}\Omega_2+(-1)^{\frac{r}{4}\left(\delta+\frac{g_1-1}{2}\right)}\left(1-(-1)^{\frac{r}{4}}\right)\operatorname{Re}\left(i^{1+\frac{\lambda r}{4}}\right)\Omega_4, & \text{if } 4\mid r\\
 				1+\Omega_2+(-1)^{\frac{r-2}{4}+\frac{g_1^2-1}{8}}\left(1-(-1)^{\delta+\frac{\lambda+g-1}{2}}\right)\Omega_4 & \text{if }2||r,2\mid\lambda\\
 				1 & \text{if } 2||r,2\nmid\lambda.
 			\end{cases}
 		\end{equation*}
 		Unconditionally, $\pi_{E,r}$ is bounded by $O(1)$ if $2\nmid r$.
 	\end{theorem}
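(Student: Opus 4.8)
First I would carry out the reduction sketched after Lemma~\ref{lem:1.3} and then invoke the Hardy--Littlewood Conjecture~\ref{con:1.2}. Since $E\colon y^2=x^3-gx$ has CM by $\mathbb{Z}[\sqrt{-1}]$ and $D=1\equiv1\pmod4$, Deuring's Lemma~\ref{lem:1.3} gives $a_p(E)=0\neq r$ at inert primes and $a_p(E)=\pi_p+\overline{\pi_p}=2\operatorname{Re}(\pi_p)$ at a split prime $p=\pi_p\overline{\pi_p}$, where $\pi_p\in\mathbb{Z}[\sqrt{-1}]$ is the Frobenius generator, well defined up to complex conjugation (which leaves $\operatorname{Re}$ unchanged). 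Hence $a_p(E)$ is always even, which forces the hypothesis $2\mid r$, and, since $2\mid\triangle_E$ for these curves, also yields the unconditional bound $\pi_{E,r}(x)=O(1)$ when $2\nmid r$. Consequently, up to $O(1)$, $\pi_{E,r}(x)$ counts split primes $p\le x$ with $\operatorname{Re}(\pi_p)=r/2$; since $|\pi_p|^2=p$, every such $p$ has the form $p=(r/2)^2+n^2$ with $n\in\mathbb{Z}$, and among the four associates in $\mathfrak{U}_1\cdot\pi_p$ the one actually attached to $E$ as its Frobenius must have real part exactly $r/2$.

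Next I would pin down this associate. Let $\widetilde{\pi}_p$ denote the primary generator of $(\pi_p)$, i.e.\ the associate with $\widetilde{\pi}_p\equiv1\pmod{(1+\sqrt{-1})^3}$. By the theory of complex multiplication for the curves $y^2=x^3-gx$ (worked out in \cite{wan2021langtrotter}), $\pi_p=u_g(p)\,\widetilde{\pi}_p$ with $u_g(p)\in\mathfrak{U}_1$ a product of quartic residue symbols; writing $g=(-1)^\delta2^\lambda g_1$ and using multiplicativity, $u_g(p)$ decomposes (up to an inversion or conjugation that is immaterial here) as $\bigl(\tfrac{-1}{\widetilde{\pi}_p}\bigr)_4^{\delta}\bigl(\tfrac{2}{\widetilde{\pi}_p}\bigr)_4^{\lambda}\bigl(\tfrac{g_1}{\widetilde{\pi}_p}\bigr)_4$. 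The supplementary laws for $\bigl(\tfrac{-1}{\,\cdot\,}\bigr)_4$ and $\bigl(\tfrac{2}{\,\cdot\,}\bigr)_4$ evaluate the first two factors through $r/2$ and $n$ modulo a fixed power of $2$ and, using $(r/2)^2+n^2=p$, ultimately through $r/2$, $\delta$, $\lambda$ and small congruence data; this is exactly what produces the three-way case split of $\kappa(g,r)$ according to $4\mid r$, or $2\|r$ with $2\mid\lambda$, or $2\|r$ with $2\nmid\lambda$, and the signs $(-1)^{\lambda r/4}$, $(-1)^{(r/4)(\delta+(g_1-1)/2)}$, $\operatorname{Re}(i^{1+\lambda r/4})$ occurring in it. By quartic reciprocity, $\bigl(\tfrac{g_1}{\widetilde{\pi}_p}\bigr)_4$ depends only on $\widetilde{\pi}_p$ modulo $g_1$, hence, with $r/2$ fixed, on $n$ modulo $g_1$; decomposing it over the prime powers $q^\nu\|g_1$ and recording the order --- as a function of $\nu\bmod4$ --- of each resulting quartic-symbol character is what yields the two finite products $\Omega_2(1;g_1;r)$ and $\Omega_4(1;g_1;r)$.

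For the final step, observe that when $2\|r$ the number $r/2$ is odd, so $n$ must be even for $p=(r/2)^2+n^2$ to be an odd prime and one works with $p=(r/2)^2+4n^2$ instead, while for $4\mid r$ one keeps $p=(r/2)^2+n^2$. After the above analysis the requirement $\operatorname{Re}(\pi_p)=r/2$ becomes a congruence on $n$ modulo a fixed $M$ built from $16$ and $g_1$, with $r/2$ fixed; for each admissible class $n\equiv n_0\pmod M$ the primes $p\le x$ in question are exactly the prime values taken by a quadratic polynomial in the remaining variable (substitute $n\mapsto Mn+n_0$), to which Conjecture~\ref{con:1.2} applies --- its discriminant is a negative non-square, the remaining hypotheses are immediate from $p$ being odd, and the classes $n_0$ for which the polynomial is imprimitive simply contribute no primes. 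Expanding the indicator of the admissible classes by orthogonality of Dirichlet characters modulo $M$ and summing these Hardy--Littlewood asymptotics, the principal character contributes a leading term whose singular series simplifies --- via $\left(\frac{-r^2}{p}\right)=\left(\frac{-1}{p}\right)$ for $p\nmid r$ and $=0$ for $p\mid r$ --- to $\prod_{p\nmid2r}\left(1-\frac{\left(\frac{-1}{p}\right)}{p-1}\right)$, while the overall count picks up the extra factor $\tfrac14=1/|\mathfrak{U}_1|$ because the Hardy--Littlewood count includes every prime of the form $(r/2)^2+n^2$, whereas only those whose twist-normalized Frobenius has real part $r/2$ --- a quarter of them on average, the four associates of $\pi_p$ having distinct real parts $\pm r/2,\pm n$ --- are counted by $\pi_{E,r}$. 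The non-principal characters supply the correction $\kappa(g,r)$: those supported on powers of $2$ give, through the sign evaluations above, the signs and the case split, and those modulo $g_1$ give $\Omega_2$ and $\Omega_4$, each surviving factor $\tfrac{-1}{q-1-\left(\frac{-1}{q}\right)}$ being the ratio of the $\chi$-twisted to the principal leading term at a ramified prime $q^\nu\|g_1$ with $q\nmid r$ and $j\nmid\nu$. This assembles to
\[
\overline{\omega}_{E,r}=\tfrac14\,\kappa(g,r)\prod_{p\nmid2r}\left(1-\frac{\left(\frac{-1}{p}\right)}{p-1}\right),
\]
which is the constant $h_{1,r}$ of \cite{wan2021langtrotter}.

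The main obstacle --- and essentially the entire substance of the argument --- lies in the quartic-symbol evaluation and the character-sum bookkeeping above taken together: applying Gaussian quartic reciprocity and the supplementary laws to extract the exact closed form of $\kappa(g,r)$ with every sign and every case boundary correct, and, simultaneously, checking that the ``bad'' Euler factors of the Hardy--Littlewood singular series at the primes dividing $2r$ and $g_1$ recombine with the non-principal character contributions to give precisely the product over $p\nmid2r$ together with the finite products $\Omega_2,\Omega_4$, with no spurious factor surviving at $p\mid r$ or $p\mid g_1$. The analytic input (Conjecture~\ref{con:1.2}) enters only as a black box, and the reduction via Deuring's lemma is routine; all the delicate work is in the explicit evaluation and matching of the local factors, as carried out in \cite{wan2021langtrotter}.
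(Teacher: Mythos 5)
The paper does not actually prove this statement: it is quoted verbatim from Wan--Xi, and the paper's only ``proof'' is the citation to Chapters 4--6 of \cite{wan2021langtrotter}, whose method it summarizes as Deuring's lemma plus orthogonality of characters and standard analytic number theory under Conjecture~\ref{con:1.2}. Your outline follows exactly that route (Deuring reduction, normalization of the Frobenius by quartic residue symbols and reciprocity, character orthogonality feeding into Hardy--Littlewood counts for $p=(r/2)^2+n^2$), so it is consistent with the cited proof, with the delicate sign and local-factor computations deferred to \cite{wan2021langtrotter} just as the paper itself does.
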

 	\subsection{The Constant $\overline{\omega}_{E,r}$ for curves having CM from $\mathbb{Q}(\sqrt{-3})$}\label{sec:2.2}
 	In this section, we will mention an explicit description of the constant $\overline{\omega}_{E,r}$ where $E/\mathbb{Q}$ has CM from $\mathbb{Q}(\sqrt{-3})$. Let $E/\mathbb{Q}$ be defined by
 	\begin{equation}\label{eq:(2.3)}
 		y^2=x^3+g
 	\end{equation}
 	where $g=(-1)^{\delta}2^{\lambda}3^{\mu}g_1$ with
 	\begin{itemize}
 		\item$\delta\in\{0,1\}$,
 		\item$\lambda,\mu\in\mathbb{N}\cup\{0\}$,
 		\item$g_1\in\mathbb{N}$ and $\gcd(6,g_1)=1$.
 	\end{itemize}
 	Moreover, for this section we define $\Omega_{j}(3;g_1;r):=\prod\limits_{\substack{p^{\nu}||g_1,p\nmid r\\j\nmid\nu}}\frac{-1}{p-1-\left(\frac{-3}{p}\right)}$, $j\in\{2,3,6\}$. Let $\mathfrak{U}_3=\{\pm1,\pm\overline{\omega},\pm\overline{\omega}^2\}$, the group of roots of unity of $\mathbb{Z}[\frac{-1+\sqrt{-3}}{2}]$, $\overline{\omega}=e^{\frac{2\pi i}{3}}$.
 	\begin{theorem}[Wan-Xi, \cite{wan2021langtrotter}, Theorem $5.1$]\label{thm:2.2}
 		Suppose $E/\mathbb{Q}$ as defined in \ref{eq:(2.3)} is a CM elliptic curve. Then, for each integer $r\neq 0$, the Hardy-Littlewood conjecture implies that
 		\begin{equation}
 			\pi_{E,r}(x)\sim \overline{\omega}_{E,r}\frac{\sqrt{x}}{\log x},
 		\end{equation}
 		holds with $\overline{\omega}_{E,r}=\frac{\sqrt{3}}{12}\xi(3,r)\prod\limits_{p\nmid2r}\left(1-\frac{\left(\frac{-3}{p}\right)}{p-1}\right)\kappa(g,r)$, where $3\nmid r$, $\xi(3,r)=\begin{cases}
 			1 & \text{if }2\mid r\\
 			2 & \text{if } 2\nmid r,
 		\end{cases}$ $$\kappa(g,r)=\left(1+\zeta_2(g,r)\left(\frac{3}{g_1}\right)\Omega_2+\hat{\mu}\zeta_1(g,r)\left(\Omega_3+\zeta_2(g,r)\left(\frac{3}{g_1}\right)\Omega_6\right)\right).$$
 		In $\kappa(g,r)$, $\hat{\mu}=\frac{2}{3}$ if $3\mid\mu$, otherwise it vanishes, and
 		\begin{equation*}
 			\zeta_1(g,r)=\begin{cases}
 				1+2\operatorname{Re}(\overline{\omega}^{\mu})\operatorname{Re}\left(\overline{\omega}^{\lambda+\frac{g_1^2-1}{3}}\right) & \text{if }r\equiv\pm2\pmod6\\
 				\operatorname{Re}\left(\overline{\omega}^{1+\frac{g_1^2-1}{3}}\right)+\operatorname{Re}(\overline{\omega}^{\mu})\operatorname{Re}\left(\overline{\omega}^{2-\lambda+\frac{g_1^2-1}{3}}+\overline{\omega}^{2+\lambda}\right) & \text{if } r\equiv\pm5\pmod6,
 			\end{cases}
 		\end{equation*} 
 		\begin{equation*}
 			\zeta_2(g,r)=\begin{cases}
 				\pm(-1)^{\delta+\lambda+\mu+\frac{g_1-1}{2}} & \text{if }r\equiv\pm8\pmod{24}\\
 				\pm(-1)^{\delta+\mu+\frac{g_1-1}{2}} & \text{if }r\equiv\pm20\pmod{24}\\
 				\pm\frac{1+(-1)^{\lambda}}{2} & \text{if } r\equiv\pm2\pmod{12}\\
 				\pm\frac{\left(1+(-1)^{\lambda}\right)\left(1+(-1)^{\delta+\mu+\frac{g_1-1}{2}}\right)}{4} & \text{if } r\equiv\pm5\pmod{6},
 			\end{cases}
 		\end{equation*} and $\left(\frac{.}{g_1}\right)$ denotes the Jacobi symbol. In $\omega_{E,r}$, $\left(\frac{.}{p}\right)$ denotes the Legendre symbol for an odd prime $p$. Unconditionally, $\pi_{E,r}$ is bounded by $O(1)$ if $3\mid r$.
 	\end{theorem}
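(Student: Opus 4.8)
The plan is to reduce the count of primes with $a_p(E)=r$ to a count of split primes of $K=\mathbb{Q}(\sqrt{-3})$ that lie in prescribed arithmetic progressions and are represented by a fixed binary quadratic form, and then to feed that into the Hardy--Littlewood conjecture. First, since $r\neq 0$, Lemma~\ref{lem:1.3} shows that every prime $p$ counted by $\pi_{E,r}$ splits in $\mathcal{O}_K=\mathbb{Z}[\overline{\omega}]$, say $p=\pi\overline{\pi}$, and $a_p(E)=\pi+\overline{\pi}$ for the one generator $\pi$ whose multiplication endomorphism induces the $p$-power Frobenius on the reduction. The point is that for $E:y^2=x^3+g$ the correct $\pi$ among its six associates $\zeta\pi$, $\zeta\in\mathfrak{U}_3$, is pinned down by the classical CM normalization: if $\pi$ is chosen primary (i.e.\ $\pi\equiv\pm 1\pmod 3$), the Frobenius generator is $\left(\frac{4g}{\pi}\right)_{6}^{-1}\pi$ up to a fixed convention, where $\left(\frac{\cdot}{\pi}\right)_{6}$ is the sextic residue symbol in $\mathbb{Z}[\overline{\omega}]$. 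Hence $a_p(E)=r$ becomes: there is a primary $\pi$ with $N(\pi)=p$ and $\left(\frac{4g}{\pi}\right)_{6}^{-1}\pi+\left(\frac{4g}{\pi}\right)_{6}\overline{\pi}=r$.

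Next, writing $\pi=a+b\overline{\omega}$ with $a,b\in\mathbb{Z}$, the equation $N(\pi)=p$ is the quadratic form $p=a^2-ab+b^2$ and the trace relation $\pi+\overline{\pi}=r$ is linear, so $p$ is a value of an explicit quadratic polynomial $Am^2+Bm+C$ with $B^2-4AC$ a negative multiple of $3$; this is exactly the second of the two cases discussed after Lemma~\ref{lem:1.3}, with $D=3$. Using the supplementary sextic laws at $2$ and $3$ and multiplicativity of $\left(\frac{\cdot}{\pi}\right)_{6}$ in the numerator, together with sextic reciprocity, the normalization condition $\left(\frac{4g}{\pi}\right)_{6}=\zeta$ translates into finitely many congruences on $(a,b)$ modulo $12g_1$, hence into a finite set $\mathcal{S}$ of admissible residue classes for $p$ modulo $12g_1$ depending on $\delta,\lambda,\mu,g_1$ and on $r$ modulo $24$, so that
\begin{equation*}
	\pi_{E,r}(x)=\sum_{c\in\mathcal{S}}\#\{p\le x:\ p\equiv c\;(\mathrm{mod}\ 12g_1),\ p=Am^2+Bm+C\}+O(1).
\end{equation*}

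Then, to each residue class I would apply the Hardy--Littlewood conjecture (Conjecture~\ref{con:1.2}) for the quadratic polynomial restricted to that progression, obtained by the linear substitution $m\mapsto 12g_1 m'+m_0$; each such count is $\sim(\text{a local density})\cdot\frac{\sqrt{x}}{\log x}$. Summing over $c\in\mathcal{S}$ and regrouping the resulting Euler product prime by prime separates three kinds of contributions: for $\ell\nmid 6g_1 r$ one gets the generic factor $1-\left(\frac{-3}{\ell}\right)/(\ell-1)$, which assembles to $\prod_{\ell\nmid 2r}\bigl(1-\left(\frac{-3}{\ell}\right)/(\ell-1)\bigr)$; for $\ell\mid g_1$ one gets factors $-1/(\ell-1-\left(\frac{-3}{\ell}\right))$ bundled into $\Omega_2$ and $\Omega_6$ according to the exponent $\nu$ with $\ell^{\nu}\|g_1$; and the primes $2,3$, together with the sum over the unit ambiguities in $\mathfrak{U}_3$ and the $1/\sqrt{A}$ normalization, produce the prefactor $\frac{\sqrt{3}}{12}\,\xi(3,r)$ and the combinatorial coefficients $\zeta_1(g,r),\zeta_2(g,r)$, giving the asserted value of $\overline{\omega}_{E,r}$. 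Finally, when $3\mid r$ the primary condition $\pi\equiv\pm 1\pmod 3$ forces $\pi+\overline{\pi}\equiv\pm 2\pmod 3$, so $3\mid a_p(E)$ can hold for only finitely many split $p$; this yields the unconditional bound $\pi_{E,r}=O(1)$.

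The hard part will be the second step: determining the precise sextic-residue normalization of the Frobenius for $y^2=x^3+g$ when $g=(-1)^{\delta}2^{\lambda}3^{\mu}g_1$ is fully general, and then evaluating the ensuing character and unit sums. This is exactly where the many case distinctions in $\zeta_1(g,r)$ and $\zeta_2(g,r)$ (on $r$ modulo $6$, $12$, $24$ and on the parities of $\delta,\lambda,\mu$ and of $\frac{g_1^2-1}{3}$) originate, and getting every sign right -- via the supplementary laws at $2$ and $3$ and rational sextic reciprocity -- is the real labour; once $\mathcal{S}$ is correctly identified, the analytic input is the routine application of Hardy--Littlewood in progressions plus the reassembly of the Euler product.
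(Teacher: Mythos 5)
This statement is quoted in the paper from Wan--Xi (their Theorem 5.1); the paper itself gives no proof, and your outline does follow the route Wan--Xi actually take (Deuring's lemma, the sextic-residue normalization of the Frobenius for $y^2=x^3+g$, Hardy--Littlewood applied to the resulting quadratic polynomial with the parameter restricted to progressions, then reassembly of the Euler product -- Wan--Xi detect the correct unit by orthogonality of sextic characters rather than by enumerating residue classes, which is the same mechanism in different clothing). The problem is that your proposal stops exactly where the theorem begins. The entire content of the statement is the explicit value of $\overline{\omega}_{E,r}$: the prefactor $\frac{\sqrt{3}}{12}\xi(3,r)$, the factors $\Omega_2,\Omega_6$ attached to primes dividing $g_1$, and above all the coefficients $\zeta_1(g,r)$ and $\zeta_2(g,r)$ with their case distinctions modulo $6$, $12$, $24$ and their dependence on $\delta,\lambda,\mu,g_1$. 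You explicitly defer the determination of the normalization $\left(\frac{4g}{\pi}\right)_6$ and the evaluation of the ensuing character and unit sums as ``the real labour,'' so none of these constants is derived or verified; as it stands the proposal establishes only the qualitative shape $\pi_{E,r}\sim c\cdot\frac{\sqrt{x}}{\log x}$ under Hardy--Littlewood, not the asserted formula. That computation is not a routine afterthought: it occupies the bulk of Wan--Xi's Chapter 5 and is where every sign in $\zeta_1,\zeta_2$ comes from.

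Two further points need repair even in the sketch. First, the sextic condition $\left(\frac{4g}{\pi}\right)_6=\zeta$ is a condition on the generator $\pi$, i.e.\ on the pair $(a,b)$ (equivalently on the parameter $m$ of the quadratic polynomial), not on $p$ alone; two primes in the same class modulo $12g_1$ can have Frobenius generators with different sextic symbols. So the displayed reduction to counting $p\equiv c\pmod{12g_1}$ represented by $Am^2+Bm+C$ is not correct as written -- the admissible set $\mathcal{S}$ must consist of residue classes for $m$ (which is what your later substitution $m\mapsto 12g_1m'+m_0$ implicitly assumes), and one must also account for the finitely many representations/unit ambiguities per prime without double counting. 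Second, your argument for the unconditional bound when $3\mid r$ uses the trace of the \emph{primary} generator, but $a_p(E)$ is the trace of a unit multiple of it, so the congruence $\pi+\overline{\pi}\equiv\pm2\pmod 3$ does not directly apply; the correct (and simpler) argument is that $a_p(E)=r$ with $p$ split gives $4p=r^2+3b^2$ for some $b\in\mathbb{Z}$, so $3\mid r$ forces $3\mid p$, leaving only finitely many primes.
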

 	\subsection{The Constant $\overline{\omega}_{E,r}$ for curves having CM from other imaginary  quadratic fields}\label{sec:2.3}
 	In this section, we will mention an explicit description of the constant $\overline{\omega}_{E,r}$ where $E/\mathbb{Q}$ has CM from $\mathbb{Q}(\sqrt{-D})$, $D\in\{2,7,11,19,43,67,163\}$. Let $E/\mathbb{Q}$ be defined by
 	\begin{equation}\label{eq:(2.5)}
 		y^2=4x^3+ax+b
 	\end{equation}
 	where $a,b$ will be given in terms of arbitrary integer $g$ as below such that the curve is non-singular:
 	\begin{itemize}
 		\item $D=2:\, (a,b)=\left(-\frac{40}{3}g^2,-\frac{224}{27}g^3\right)$
 		\item $D=7:\, (a,b)=\left(-\frac{35}{4}g^2,-\frac{49}{8}g^3\right)$
 		\item $D=11:\, (a,b)=\left(-\frac{88}{3}g^2,\frac{847}{27}g^3\right)$
 		\item $D=19;\, (a,b)=\left(-152g^2,361g^3\right)$
 		\item $D=43:\, (a,b)=\left(-3440g^2,38829g^3\right)$
 		\item $D=67:\, (a,b)=\left(-29480g^2,974113g^3\right)$
 		\item $D=163:\, (a,b)=\left(8697680g^2,185801\cdot163^2g^3\right)$.
 	\end{itemize}
 	The choices of $(a,b)$ above, make the expression of $a_p(E)$ take a uniform shape when $D\ge7$. And, $g$ is defined as $g=(-1)^{\delta}2^{\lambda}D^{\mu}g_1$, where
 	\begin{itemize}
 		\item $\delta\in\{0,1\}$,
 		\item $\lambda,\mu\in\mathbb{N}\cup\{0\}$,
 		\item $g_1\in\mathbb{N}$ with $\gcd(2D,g_1)=1$,
 	\end{itemize}
 	where we always assume $\mu= 0$ for $D = 2$, and let $\Omega_{j}(D;g_1;r):=\prod\limits_{\substack{p^{\nu}||g_1,p\nmid r\\j\nmid\nu}}\frac{-1}{p-1-\left(\frac{-D}{p}\right)}$.
 	\begin{theorem}[Wan-Xi, \cite{wan2021langtrotter}, Theorem $6.1$]
 		Let $D = 2$. Suppose $E/\mathbb{Q}$ is the elliptic curve defined by \ref{eq:(2.5)} with the above convention. For each non-zero integer $r\equiv2\pmod4$, the Hardy–Littlewood conjecture implies that
 		\begin{equation}
 			\pi_{E,r}(x)\sim \overline{\omega}_{E,r}\frac{\sqrt{x}}{\log x},
 		\end{equation}
 		holds with $$\overline{\omega}_{E,r}=\frac{1}{\sqrt{2}}\left(1+\frac{1}{2}(-1)^{\frac{(r-2)(r+10)}{32}+\delta+\lambda+\frac{g_1-1}{2}}\left(\frac{2}{g_1}\right)\Omega_2(2;g_1,r)\right)\prod\limits_{p\nmid2r}\left(1-\frac{\left(\frac{-2}{p}\right)}{p-1}\right),$$
 		where $\left(\frac{.}{g_1}\right)$ denotes the Jacobi symbol. Unconditionally, $\pi_{E,r}$ is bounded by $O(1)$ if $r\not\equiv2\pmod4$.
 	\end{theorem}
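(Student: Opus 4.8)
The plan is to carry out the argument of Wan and Xi specialized to $K=\mathbb{Q}(\sqrt{-2})$, which is the branch $D\equiv2\pmod4$ of Case~$1$ following Lemma~\ref{lem:1.3}. First I would reduce, via Deuring's Lemma, to counting primes represented by a quadratic polynomial. Since $\mathbb{Z}[\sqrt{-2}]=\mathcal{O}_K$ has unit group $\{\pm1\}$, the curve~\eqref{eq:(2.5)} with $(a,b)=\bigl(-\tfrac{40}{3}g^2,-\tfrac{224}{27}g^3\bigr)$ is the quadratic twist by $g$ of a fixed CM base curve $E_0$, so its Hecke Grössencharakter $\psi_E$ differs from that of $E_0$ by the quadratic character $p\mapsto\left(\frac{g}{p}\right)$. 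For a good prime $p$, $a_p(E)=r$ holds precisely when $p=\mathfrak{p}\overline{\mathfrak{p}}$ splits in $\mathcal{O}_K$ and the generator $\psi_E(\mathfrak{p})$ of $\mathfrak{p}$ singled out by $\psi_E$ has trace $r$ rather than $-r$. Writing $\psi_E(\mathfrak{p})=r/2+n\sqrt{-2}$ yields $p=(r/2)^2+2n^2$; this forces $r$ even, and since $p$ is odd we need $r/2$ odd, i.e. $r\equiv2\pmod4$. When $r\not\equiv2\pmod4$ no prime of good reduction can occur apart from $p\mid2$, which gives the unconditional $O(1)$ assertion.

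Next I would translate the ``correct generator'' condition into congruences on $n$. Recalling that $\mu=0$ for $D=2$, so $g=(-1)^\delta2^\lambda g_1$, the conductor of $\psi_E$ is supported on $2$ and on $\mathrm{rad}(g_1)$. Expanding the characteristic function of ``$\psi_E(\mathfrak{p})$ has trace $r$'' by orthogonality of the characters of $(\mathcal{O}_K/\mathfrak{f}_{\psi_E})^\times$ then shows this condition to be equivalent to $n$ lying in a prescribed union $S$ of residue classes modulo $M:=2^{e}\,\mathrm{rad}(g_1)$, for an absolute constant $e$: the classes modulo $2^{e}$ record $\delta$, $\lambda$ and $r$ modulo a power of $2$ (from the $2$-adic component of $\psi_E$ together with $p=(r/2)^2+2n^2$), while the class of $n$ modulo each $\ell\mid g_1$ records the Legendre symbol $\left(\frac{\ell}{p}\right)$ with $p=2n^2+(r/2)^2$. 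Writing $f_r(n)=2n^2+(r/2)^2$ and discarding an $O(1)$ set of classes that force a fixed prime divisor, one arrives at
\[
\pi_{E,r}(x)=\sum_{n_0\in S}\#\bigl\{\,n\equiv n_0\!\!\!\pmod M:\ f_r(n)\le x,\ f_r(n)\ \text{prime}\,\bigr\}+O(1).
\]

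Then I would feed each inner count into the Hardy–Littlewood Conjecture~\ref{con:1.2}, applied to $t\mapsto f_r(Mt+n_0)=2M^2t^2+4Mn_0t+f_r(n_0)$: its leading coefficient is positive, its discriminant $-2^{3}M^2(r/2)^2$ equals $-2$ times a square and so is not a square, and the gcd and parity hypotheses hold because $(r/2)^2$ is odd. Summing the resulting asymptotics, the primes $p\nmid2r$ produce the Euler product $\prod_{p\nmid2r}\bigl(1-\tfrac{(\frac{-2}{p})}{p-1}\bigr)$, each local factor emerging as $\tfrac{\ell}{\ell-1}\bigl(1-\tfrac{1+(\frac{-2}{\ell})}{\ell}\bigr)=1-\tfrac{(\frac{-2}{\ell})}{\ell-1}$ from the two roots of $f_r$ modulo $\ell$, while the normalization $\sqrt{a}=\sqrt{2}$ together with the fact that $S$ makes up half of all residues yields the prefactor $\tfrac12\cdot\sqrt2=\tfrac{1}{\sqrt2}$. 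The remaining work is to evaluate how $|S|$ deviates from exactly half the residues: expressed through the non-trivial characters modulo $2^{e}$ and modulo the primes $\ell\mid g_1$, the $2$-adic characters collapse — using $\left(\frac2p\right)$ with $p\equiv2n^2+(r/2)^2$ and the $2$-adic congruence defining $\psi_E$ — to the single sign $(-1)^{\frac{(r-2)(r+10)}{32}+\delta+\lambda+\frac{g_1-1}{2}}$, and the odd characters collapse, via $\sum_{n\bmod\ell}\left(\frac{2n^2+(r/2)^2}{\ell}\right)=-\left(\frac2\ell\right)$ for odd $\ell\nmid r$, to $\left(\frac{2}{g_1}\right)\Omega_2(2;g_1,r)$, each $\ell^\nu||g_1$ with $\ell\nmid r$ and $\nu$ odd contributing the factor $\tfrac{-1}{\ell-1-(\frac{-2}{\ell})}$. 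Assembling the pieces gives the asserted value of $\overline{\omega}_{E,r}$.

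I expect the main obstacle to be the $2$-adic bookkeeping in this last step: one must pin down the exact power of $2$ in the conductor of $\psi_E$ for the specific model with $(a,b)=\bigl(-\tfrac{40}{3}g^2,-\tfrac{224}{27}g^3\bigr)$ and then match the congruence $\psi_E(\mathfrak{p})\equiv(\text{unit})\pmod{2^{e}}$ against the constraint $p=(r/2)^2+2n^2$ carefully enough that the exponent $\tfrac{(r-2)(r+10)}{32}+\delta+\lambda+\tfrac{g_1-1}{2}$ emerges with precisely the right sign, not merely up to an unresolved $\pm$; a secondary point is verifying admissibility of each shifted polynomial for Conjecture~\ref{con:1.2} and that the discarded residue classes contribute only $O(1)$.
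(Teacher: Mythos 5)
This theorem is not proved in the paper at all: it is quoted verbatim from Wan--Xi (\cite{wan2021langtrotter}, Theorem 6.1), and the paper's only indication of method is the remark that Wan and Xi use Deuring's Lemma \ref{lem:1.3}, orthogonality of characters, and standard analytic number theory, with details in Chapters 4--6 of \cite{wan2021langtrotter}. Measured against that, your outline reconstructs the right strategy: Deuring's lemma reduces $a_p(E)=r$ to $p=(r/2)^2+2n^2$ with $r\equiv2\pmod 4$ (and gives the unconditional $O(1)$ statement otherwise), the ambiguity $\pm\pi$ is resolved by the Hecke character of the twisted curve, the resulting condition is detected by characters modulo a modulus supported on $2$ and the primes dividing $g_1$, and Conjecture \ref{con:1.2} is applied to each shifted polynomial $f_r(Mt+n_0)$, whose admissibility you check correctly; the local computations you quote (the factor $1-\frac{(\frac{-2}{\ell})}{\ell-1}$ from the two roots of $f_r$ modulo $\ell$, and $\sum_{n\bmod \ell}\left(\frac{2n^2+(r/2)^2}{\ell}\right)=-\left(\frac{2}{\ell}\right)$) are sound.

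The genuine gap is the one you yourself flag, and it is not a minor bookkeeping point but the actual content of the theorem: the explicit value of $\overline{\omega}_{E,r}$, in particular the sign $(-1)^{\frac{(r-2)(r+10)}{32}+\delta+\lambda+\frac{g_1-1}{2}}$, the Jacobi symbol $\left(\frac{2}{g_1}\right)$, and the precise shape of $\Omega_2(2;g_1,r)$ (including the role of the exponents $\nu$ with $2\nmid\nu$ in $p^{\nu}\,\|\,g_1$), can only come out of an explicit formula for $a_p$ for the specific model $(a,b)=\left(-\frac{40}{3}g^2,-\frac{224}{27}g^3\right)$ --- equivalently, an explicit normalization of the generator $\pi$ of $\mathfrak{p}$ (Wan--Xi pin this down through residue-symbol formulas for the curve family rather than through an abstract conductor computation for $\psi_E$). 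Without carrying out that $2$-adic identification, your argument yields $\overline{\omega}_{E,r}=\frac{1}{\sqrt{2}}\left(1+\epsilon\cdot\frac{1}{2}\Omega_2(2;g_1,r)\right)\prod_{p\nmid 2r}\left(1-\frac{(\frac{-2}{p})}{p-1}\right)$ with an undetermined sign $\epsilon$ depending on $(\delta,\lambda,g_1,r\bmod 16)$, which is strictly weaker than the stated theorem; likewise the claim that the admissible set $S$ occupies exactly half of the residues (giving the prefactor $\frac{1}{\sqrt 2}$) needs the same explicit character analysis rather than an appeal to symmetry. So the proposal matches the cited approach in outline but, as written, does not constitute a proof of the stated formula.
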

 	\begin{theorem}[Wan-Xi, \cite{wan2021langtrotter}, Theorem $6.2$]
 		Let $D\in\{7, 11, 19, 43, 67, 163\}$. Suppose $E/\mathbb{Q}$ is the elliptic curve defined by \ref{eq:(2.5)} with the above convention. For each non-zero integer $r$, the Hardy–Littlewood conjecture implies that
 		\begin{equation}
 			\pi_{E,r}(x)\sim \overline{\omega}_{E,r}\frac{\sqrt{x}}{\log x},
 		\end{equation}
 		holds with $$\overline{\omega}_{E,r}=\frac{\xi(D,r)\sqrt{D}}{2\phi(D)}\left(1+\xi_D(g,r)\left(\frac{2^{\lambda+1}g_1r}{D}\right)\Omega_2(D;g_1.r)\right)\prod\limits_{p\nmid2r}\left(1-\frac{\left(\frac{-D}{p}\right)}{p-1}\right),$$
 		for $D\nmid r$, where $$\xi(D,r)=\begin{cases}
 			1, & \text{if }D\equiv1\pmod4, 2\mid r\text{ and }\gcd(D,r)=1\\
 			1, & \text{if }D\equiv2\pmod4, 2\mid r\text{ and }\gcd(D,r/2)=1\\
 			1, & \text{if }D\equiv3\pmod4, 2\mid r\text{ and }\gcd(D,r)=1\\
 			2, & \text{if }D\equiv3\pmod8, 2\nmid r\text{ and }\gcd(D,r)=1\\
 			0, & \text{otherwise}
 		\end{cases}$$
 		$$\xi_D(g,r)=\begin{cases}
 			\frac{1}{2}(-1)^{\frac{g_1-1}{2}}(1+(-1)^{\lambda}), & \text{for }2\mid\mid r,\\
 			(-1)^{\delta+\mu+\frac{\lambda r}{4}}, & \text{for }4\mid r,\\
 			\frac{1}{4}(1+(-1)^{\lambda})((-1)^{\frac{g_1-1}{2}}+(-1)^{\delta+\mu}) & \text{for }2\nmid r,
 		\end{cases}$$
 		$\left(\frac{.}{D}\right)$ denotes the Jacobi symbol, and $\phi(.)$ denotes the Euler's totient function. Unconditionally, $\pi_{E,r}$ is bounded by $O(1)$ if $D\mid r$.
 	\end{theorem}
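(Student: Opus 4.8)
The plan is to deduce this asymptotic from Deuring's lemma (Lemma~\ref{lem:1.3}) and the Hardy--Littlewood conjecture (Conjecture~\ref{con:1.2}), reproducing the analytic reduction of Wan and Xi. Since $r\neq 0$ and inert primes satisfy $a_p(E)=0$, only primes $p=\pi\overline{\pi}$ that split in $\mathcal{O}_K$ can contribute to $\pi_{E,r}$. Because $D\equiv 3\pmod 4$ for all $D$ in the list, I would write $\pi=u+v\omega$ with $\omega=\tfrac{1+\sqrt{-D}}{2}$ and $u,v\in\mathbb{Z}$, so that $N(\pi)=u^2+uv+\tfrac{D+1}{4}v^2$ and, by Deuring, $a_p(E)=\varepsilon\,(\pi+\overline\pi)=\varepsilon\,(2u+v)$ for a sign $\varepsilon=\pm1$ (note $\mathcal{O}_K^{\times}=\{\pm1\}$ here) fixed by which associate of $\pi$ induces the $p$-power Frobenius of the chosen model \eqref{eq:(2.5)}. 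That choice is governed by the Hecke character $\psi_E$ of $E$; twisting the base curve by $g=(-1)^{\delta}2^{\lambda}D^{\mu}g_1$ multiplies $\psi_E$ by the quadratic residue symbol attached to $g$, which is exactly how the parameters $\delta,\lambda,\mu,g_1$ enter. Thus $a_p(E)=r$ becomes $v=r-2u$ together with a congruence $u\equiv u_0\pmod{M}$, with $M$ built from powers of $2$, $D$ and $g_1$, that selects the correct associate of $\pi$; under these constraints $p=N\!\big(u+(r-2u)\omega\big)=Du^2-Dru+\tfrac{(D+1)r^2}{4}$, an integral quadratic polynomial in $u$ of discriminant $-Dr^2$.

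Next, for each admissible residue I would set $u=u_0+Mt$, so that $f_{u_0}(t):=D(u_0+Mt)^2-Dr(u_0+Mt)+\tfrac{(D+1)r^2}{4}$ has positive leading coefficient, discriminant $-D(Mr)^2$ (negative, hence not a perfect square), and primitive content unless its content is a proper divisor of $p$, in which case that residue class contributes only $O(1)$ primes. Conjecture~\ref{con:1.2} then gives $|\{p\le x: p=f_{u_0}(t),\ t\in\mathbb{N}\}|\sim c(u_0)\tfrac{\sqrt{x}}{\log x}$, and summing over the finitely many admissible $u_0$ yields $\pi_{E,r}\sim\big(\sum_{u_0}c(u_0)\big)\tfrac{\sqrt{x}}{\log x}$. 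When $D\mid r$ every such $f_{u_0}$ is identically divisible by $D$, forcing $p=D$ and hence the unconditional $O(1)$ bound.

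The last and most delicate step is to evaluate $\sum_{u_0}c(u_0)$ and match it with $\overline{\omega}_{E,r}$. Each $c(u_0)=\tfrac{\gcd(2,\cdot)\,\delta'}{\sqrt{A'}\,\phi(\delta')}\prod_{q\nmid 2A'}\big(1-\tfrac{(\frac{\Delta'}{q})}{q-1}\big)$, and since $\big(\tfrac{\Delta'}{q}\big)=\big(\tfrac{-D}{q}\big)$ for all $q\nmid 2Dr g_1$, one factors out the common Euler product $\prod_{q\nmid 2r}\big(1-\tfrac{(\frac{-D}{q})}{q-1}\big)$. The residual local factors at primes dividing $g_1$, after summing over the $g_1$-part of the residues $u_0$, collapse into $1+\xi_D(g,r)\big(\tfrac{2^{\lambda+1}g_1 r}{D}\big)\Omega_2(D;g_1,r)$ by the definition of $\Omega_2$, while the contributions at $q=2$ and $q\mid D$, weighted by the number of residues $u_0$ meeting the normalization congruence, assemble into the leading factor $\tfrac{\xi(D,r)\sqrt D}{2\phi(D)}$; the case split in $\xi(D,r)$ by $D\bmod 8$ and the parity of $r$ records when that congruence is solvable and with how many solutions, and the factor $\tfrac12(1+(-1)^{\lambda})$ inside $\xi_D(g,r)$ records a cancellation between the two associates of $\pi$ when $\lambda$ is odd. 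I expect this last bookkeeping to be the main obstacle: determining $\psi_E$ and the exact normalization congruence, and then verifying that the resulting weighted sum of Hardy--Littlewood densities at the ramified and dyadic places collapses to precisely $\tfrac{\xi(D,r)\sqrt D}{2\phi(D)}\big(1+\xi_D(g,r)(\tfrac{2^{\lambda+1}g_1 r}{D})\Omega_2(D;g_1,r)\big)$ rather than to some other combination of the local data.
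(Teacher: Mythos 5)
This theorem is not proved in the paper at all: it is quoted verbatim from Wan--Xi (Theorem $6.2$ of \cite{wan2021langtrotter}), and the paper only points to Chapters $4$--$6$ of that work, where the constant is extracted via Deuring's lemma, the Hecke character of the quadratic-twist family, and orthogonality of characters. Your reduction is the right skeleton and matches that strategy: by Lemma \ref{lem:1.3} only split primes contribute, and since $D\equiv3\pmod4$ one lands on the quadratic polynomial $p=Du^2-Dru+\tfrac{(D+1)r^2}{4}$ of discriminant $-Dr^2$ (the paper's Case 2), splits into residue classes of $u$, applies Conjecture \ref{con:1.2} to each class, and sums the Hardy--Littlewood densities; your argument for the unconditional $O(1)$ bound when $D\mid r$ is also fine.

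The genuine gap is that the proposal stops exactly where the content of the theorem begins. The statement is not merely ``$\pi_{E,r}\sim c\,\sqrt{x}/\log x$ for some constant'': it asserts the precise value $\overline{\omega}_{E,r}=\frac{\xi(D,r)\sqrt D}{2\phi(D)}\bigl(1+\xi_D(g,r)\bigl(\frac{2^{\lambda+1}g_1r}{D}\bigr)\Omega_2(D;g_1,r)\bigr)\prod_{p\nmid 2r}\bigl(1-\frac{(\frac{-D}{p})}{p-1}\bigr)$, and every nontrivial case distinction ($\xi(D,r)$ by $D\bmod 8$ and the parity of $r$, the three-branch definition of $\xi_D(g,r)$, the Jacobi symbol, the $\Omega_2$ factor) lives in the step you defer as ``the main obstacle.'' Two specific points are left unestablished rather than merely tedious: (i) the claim that the condition ``$\pi$ is the Frobenius for the model twisted by $g=(-1)^{\delta}2^{\lambda}D^{\mu}g_1$'' is equivalent to a single congruence $u\equiv u_0\pmod M$ is asserted, not derived; in Wan--Xi this is where the Hecke/quadratic residue character enters, and converting the residue-symbol condition on $\pi$ modulo $2Dg_1$ into congruences (via reciprocity) and then detecting it by character orthogonality is precisely what produces $\xi_D(g,r)$ and the symbol $\bigl(\frac{2^{\lambda+1}g_1r}{D}\bigr)$; and (ii) the evaluation of the weighted sum $\sum_{u_0}c(u_0)$ of Hardy--Littlewood constants, including the local factors at $2$, at $D$, and at primes dividing $g_1$ (which must collapse to $\Omega_2(D;g_1,r)$ and to the prefactor $\frac{\xi(D,r)\sqrt D}{2\phi(D)}$), is only conjectured to come out right. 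Since verifying that this bookkeeping yields exactly the stated constant is the theorem, the proposal as written is an outline of the known approach rather than a proof.
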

 	\subsection{The Constant $C_{E,r}$}\label{sec:2.4}
 	Let $E[n]$ be the $n$-torsion subgroup. The action of the absolute Galois group on $E[n]$ induces a continuous group homomorphism
 	\begin{equation*}
 		\rho_{E,n}:\operatorname{Gal}(\overline{\mathbb{Q}}/\mathbb{Q})\to\operatorname{Aut}(E[n])\cong\operatorname{GL}_2(\mathbb{Z}/n\mathbb{Z}).
 	\end{equation*}
 	The image of $\rho_{E,n}$ is a subgroup of $\operatorname{GL}_{2}(\mathbb{Z}/n\mathbb{Z})$ since the $n$- torsion group is a free $\mathbb{Z}/n\mathbb{Z}$-module of rank $2$. Taking the inverse limit over all $n\in\mathbb{N}$, one obtains a continuous group homomorphism
 	\begin{equation*}
 		\rho_{E}:\operatorname{Gal}(\overline{\mathbb{Q}}/\mathbb{Q})\to\operatorname{GL}_{2}({\widehat{\mathbb{Z}}})
 	\end{equation*}
 	where $\widehat{\mathbb{Z}}=\lim\limits_{\overleftarrow{n}}\mathbb{Z}/n\mathbb{Z}$ denotes the profinite completion of $\mathbb{Z}$.
 	\begin{theorem}[Serre's open image theorem, \cite{serre1972proprietes}]\label{thm:2.5}
 		Let $E/\mathbb{Q}$ be an elliptic curve with no complex multiplication. Then, we have
 		\begin{equation*}
 			[\operatorname{GL}_{2}(\widehat{\mathbb{Z}}):\operatorname{Gal}(\overline{\mathbb{Q}}/\mathbb{Q})]<\infty.
 		\end{equation*}
 	\end{theorem}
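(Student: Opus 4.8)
The statement above is Serre's open image theorem, and the route I would follow is the one of \cite{serre1972proprietes}. Write $G=\operatorname{Gal}(\overline{\mathbb{Q}}/\mathbb{Q})$ and let $H=\rho_E(G)\subseteq\operatorname{GL}_2(\widehat{\mathbb{Z}})=\prod_\ell\operatorname{GL}_2(\mathbb{Z}_\ell)$ be the image of the adelic representation; the claim $[\operatorname{GL}_2(\widehat{\mathbb{Z}}):H]<\infty$ is equivalent to $H$ being open. For each prime $\ell$ let $H_\ell\subseteq\operatorname{GL}_2(\mathbb{Z}_\ell)$ and $\bar H_\ell\subseteq\operatorname{GL}_2(\mathbb{F}_\ell)$ denote the images of $\rho_{E,\ell^\infty}$ and $\rho_{E,\ell}$. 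The proof breaks into three parts: (a) each $H_\ell$ is open in $\operatorname{GL}_2(\mathbb{Z}_\ell)$; (b) $\bar H_\ell=\operatorname{GL}_2(\mathbb{F}_\ell)$ for all but finitely many $\ell$; (c) a glueing step deducing openness of $H$ in the product from (a) and (b).

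For part (a) I would use $\ell$-adic Lie theory: $H_\ell$ is closed, hence an $\ell$-adic Lie subgroup of $\operatorname{GL}_2(\mathbb{Z}_\ell)$ with Lie algebra $\mathfrak{h}_\ell\subseteq\mathfrak{gl}_2(\mathbb{Q}_\ell)$, and $H_\ell$ is open exactly when $\mathfrak{h}_\ell=\mathfrak{gl}_2$. Since $\det\circ\rho_{E,\ell^\infty}$ is the $\ell$-adic cyclotomic character (Weil pairing), it is surjective onto $\mathbb{Z}_\ell^\times$, so $\mathfrak{h}_\ell$ surjects onto the trace and in particular is not contained in $\mathfrak{sl}_2$. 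It remains to exclude $\mathfrak{h}_\ell$ being a Cartan subalgebra (split or non-split) or a Borel subalgebra: a non-split Cartan, or its normalizer, would make $H_\ell$ abelian after a finite base change and acting on $E[\ell^\infty]$ through characters, which by the theory of complex multiplication (cf. Lemma \ref{lem:1.3}) forces $E$ to have CM, contrary to hypothesis; the split and Borel cases are excluded similarly, reducibility of $\rho_{E,\ell^\infty}$ over $\overline{\mathbb{Q}}$ producing characters of $G$ incompatible with the Hodge--Tate and ramification behaviour of a non-CM curve. Hence $\mathfrak{h}_\ell=\mathfrak{gl}_2$ and $H_\ell$ is open for \emph{every} $\ell$; in particular this disposes of the finitely many primes left over in part (b).

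Part (b) is the main obstacle. Here I would invoke Dickson's classification of subgroups of $\operatorname{GL}_2(\mathbb{F}_\ell)$: a proper subgroup $\bar H_\ell$ containing an element of non-square determinant is, up to conjugacy, contained in a Borel, in the normalizer of a split or non-split Cartan, or has projective image isomorphic to $A_4$, $S_4$ or $A_5$. Each case is ruled out for $\ell$ large. The Borel case yields a $\overline{\mathbb{Q}}$-rational $\ell$-isogeny on $E$, impossible for $\ell$ large by the boundedness of rational isogeny degrees over $\mathbb{Q}$ (Serre's effective bounds, and ultimately Mazur's theorem on $X_0(\ell)$). The Cartan-normalizer case becomes abelian after a quadratic base change, again pressing $E$ toward CM, and is contradicted by choosing, via Chebotarev, a good prime $p$ with $a_p(E)^2-4p$ of the wrong splitting type in the relevant imaginary quadratic field. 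The exceptional cases are excluded because $|\bar H_\ell|$ admits an a priori lower bound — coming from Frobenius elements with traces controlled by Chebotarev and the Hasse--Weil estimate $|a_p(E)|\le 2\sqrt p$ — which eventually exceeds $60(\ell-1)$. Thus there is $N$ with $\bar H_\ell=\operatorname{GL}_2(\mathbb{F}_\ell)$ for all $\ell>N$.

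For part (c), for $\ell\ge 5$ a closed subgroup of $\operatorname{GL}_2(\mathbb{Z}_\ell)$ whose reduction modulo $\ell$ contains $\operatorname{SL}_2(\mathbb{F}_\ell)$ already contains $\operatorname{SL}_2(\mathbb{Z}_\ell)$ (filtration argument using that $\operatorname{SL}_2(\mathbb{F}_\ell)$ is perfect and acts irreducibly on the successive quotients of the congruence filtration), so (b) gives $H_\ell\supseteq\operatorname{SL}_2(\mathbb{Z}_\ell)$ for $\ell>N$. To upgrade this to $H\supseteq\prod_{\ell>N}\operatorname{SL}_2(\mathbb{Z}_\ell)$ I would apply Goursat's lemma together with the fact that the simple groups $\operatorname{PSL}_2(\mathbb{F}_\ell)$ for distinct $\ell>N$ are pairwise non-isomorphic with no common nontrivial quotient, so the projection of $H$ to any finite subproduct $\prod_{N<\ell\le M}\operatorname{GL}_2(\mathbb{Z}_\ell)$ contains the full product of the corresponding $\operatorname{SL}_2(\mathbb{Z}_\ell)$; passing to the limit yields the claim. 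Combining this with the openness of $H_\ell$ for the finitely many $\ell\le N$ from part (a), the surjectivity of $\det\circ\rho_E$ onto $\widehat{\mathbb{Z}}^\times$ (adelic cyclotomic character), and one further finite Goursat-type argument over the small primes to control common abelian quotients, one sees that $H$ contains an open subgroup of $\operatorname{GL}_2(\widehat{\mathbb{Z}})$ and is therefore open, which proves the theorem. As indicated, the genuinely hard inputs are concentrated in part (b): the elimination of the reducible (Borel) case for large $\ell$ and the lower bound on $|\bar H_\ell|$ needed to kill the exceptional subgroups, which is precisely where effective isogeny bounds and the Chebotarev/Hasse--Weil estimates are indispensable.
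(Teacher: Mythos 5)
The paper itself does not prove this statement: Theorem \ref{thm:2.5} is quoted verbatim from Serre \cite{serre1972proprietes} and used as a black box (indeed only its CM analogue, Theorem \ref{thm:2.6}, enters the later computations), so there is no internal proof to compare you against. Note also that the statement as printed is an abuse of notation --- the index meant is $[\operatorname{GL}_2(\widehat{\mathbb{Z}}):\rho_E(\operatorname{Gal}(\overline{\mathbb{Q}}/\mathbb{Q}))]$ --- and you correctly silently repaired this by working with the image $H=\rho_E(G)$.

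Your outline is the standard Serre strategy and is broadly accurate: (a) openness of each $H_\ell$ by $\ell$-adic Lie theory (this is really Serre's earlier result, resting on the semisimplicity of the Tate module and $\operatorname{End}(E)=\mathbb{Z}$ for non-CM curves; Deuring's Lemma \ref{lem:1.3} about CM reductions is not the relevant input there); (b) surjectivity of $\rho_{E,\ell}$ for all but finitely many $\ell$ via Dickson's classification; (c) the gluing step using the lemma that for $\ell\ge5$ a closed subgroup of $\operatorname{SL}_2(\mathbb{Z}_\ell)$ surjecting onto $\operatorname{SL}_2(\mathbb{F}_\ell)$ is all of $\operatorname{SL}_2(\mathbb{Z}_\ell)$, Goursat's lemma with the pairwise non-isomorphic simple groups $\operatorname{PSL}_2(\mathbb{F}_\ell)$, and surjectivity of the determinant. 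Two caveats on part (b), which you rightly identify as the crux. First, the exceptional case is not eliminated by a lower bound on $|\bar H_\ell|$ exceeding $60(\ell-1)$; producing such a bound is essentially the assertion being proved and the Chebotarev/Hasse inputs you cite do not obviously yield it --- Serre's actual argument is that every element of a subgroup with projective image $A_4$, $S_4$ or $A_5$ has projective order at most $5$, so $\operatorname{tr}(g)^2/\det(g)$ lies in the reduction mod $\ell$ of the fixed finite set $\{0,1,2,4,(3\pm\sqrt{5})/2\}$, and a single well-chosen Frobenius (using $|a_p|\le 2\sqrt p$) violates these congruences once $\ell$ is large. Second, invoking Mazur's theorem for the Borel case is mathematically admissible over $\mathbb{Q}$ but anachronistic and unnecessary: Serre bounds $\ell$ by studying the isogeny character $\lambda$, whose twelfth power is unramified outside $\ell$ and hence a power of the cyclotomic character (since $\mathbb{Q}$ has no unramified abelian extensions), and then deriving congruences at auxiliary primes. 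With these repairs your proposal is the correct road map for Serre's proof, though at the level of detail given it is a sketch rather than a complete argument --- which is also all the paper offers, namely a reference.
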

 	By Theorem \ref{thm:2.5}, the image of $\rho_{E}$ has finite index in $\operatorname{GL}_{2}(\widehat{\mathbb{Z}})$ if $E$ does not have CM. In other words, there exists a positive integer level $m$ such that, for the natural projection
 	\begin{equation*}
 		\pi:\operatorname{GL}_{2}(\widehat{\mathbb{Z}})\to\operatorname{GL}_{2}(\mathbb{Z}/n\mathbb{Z}),
 	\end{equation*}
 	one obtains,
 	\begin{equation}
 		\rho_{E}(\operatorname{Gal}(\overline{\mathbb{Q}}/\mathbb{Q}))=\pi^{-1}(G_m(E)),
 	\end{equation} 
 	where $G_m(E)=\operatorname{Gal}(\mathbb{Q}(E[m])/\mathbb{Q})$ is the image of the Galois representation $\rho_{E,m}$ as a subgroup of $\operatorname{GL}_{2}(\mathbb{Z}/m\mathbb{Z})$ for a non-CM elliptic curve $E$. For the CM case, the image of the $\rho_{E}$ is not an open subgroup of $\operatorname{GL}_{2}(\widehat{\mathbb{Z}})$, rather it has an an abelian subgroup of index $2$. We will now follow the description of $C_{E,r}$ for CM elliptic curve $E/\mathbb{Q}$ as given by Jones\cite{Jones2009-yf} by following the computations in \cite{lang2006frobenius}. \\
 	\newline
 	Let $E/\mathbb{Q}$ be an elliptic curve with CM by an order $\mathcal{O}$ in an imaginary quadratic field $K$. Then the torsion part of $E$
 	\begin{equation*}
 		E_{\text{tors}}=\bigcup_{n\ge1}E[n]
 	\end{equation*}
 	is a one-dimensional $\widehat{\mathcal{O}}$-module, where $\widehat{\mathcal{O}}=\lim\limits_{\overleftarrow{n}}\mathcal{O}/n\mathcal{O}$. Then $\operatorname{Gal}(\overline{\mathbb{Q}}/K)$ acts on $E_{\text{tors}}$, preserving the $\widehat{\mathcal{O}}$ action. Thus the image of $\rho_{E}$ restricted to $\operatorname{Gal}(\overline{\mathbb{Q}}/K)$ maps into $(\widehat{\mathcal{O}})^{\times}$:
 	\begin{equation*}
 		\rho_{E}:\operatorname{Gal}(\overline{\mathbb{Q}}/K)\to(\widehat{\mathcal{O}})^{\times}=\operatorname{GL}_1(\widehat{\mathcal{O}}).
 	\end{equation*}
 	Now, we would like to mention a theorem which is a classical analogue of Theorem \ref{thm:2.5}.
 	\begin{theorem}[Serre, \cite{serre1972proprietes}, section $4.5$]\label{thm:2.6}
 		Let $E/\mathbb{Q}$ be an elliptic curve with CM by an order $\mathcal{O}$ in an imaginary quadratic field $K$. Then 
 		\begin{equation*}
 			[(\widehat{\mathcal{O}})^{\times}:\rho_E(\operatorname{Gal}(\overline{\mathbb{Q}}/K))]<\infty.
 		\end{equation*}
 	\end{theorem}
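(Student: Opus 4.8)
The plan is to compare the restriction of $\rho_E$ to $G_K:=\operatorname{Gal}(\overline{\mathbb{Q}}/K)$ with the Galois groups of the ray class fields of $K$, using the classical fact — a consequence of the Main Theorem of Complex Multiplication — that every abelian extension of an imaginary quadratic field is generated by special values of a Weber function evaluated at the torsion of a CM elliptic curve. Since an open subgroup of the compact group $(\widehat{\mathcal{O}})^{\times}$ is automatically of finite index, it is enough to exhibit an explicit open subgroup of $(\widehat{\mathcal{O}})^{\times}$ contained in $\rho_E(G_K)$.

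First I would reduce to $\mathcal{O}=\mathcal{O}_K$. Writing $\mathcal{O}=\mathbb{Z}+f\mathcal{O}_K$, the group $(\widehat{\mathcal{O}})^{\times}$ has finite index in $(\widehat{\mathcal{O}_K})^{\times}$, and by construction $\rho_E(G_K)\subseteq(\widehat{\mathcal{O}})^{\times}\subseteq(\widehat{\mathcal{O}_K})^{\times}$, so it suffices to bound $[(\widehat{\mathcal{O}_K})^{\times}:\rho_E(G_K)]$; moreover $E$ is isogenous — over a finite extension of $K$, which only shrinks $G_K$ to a finite-index subgroup and hence is harmless — to a curve $E'/K$ with CM by the maximal order $\mathcal{O}_K$, whose Galois image is commensurable with that of $E$, so we may replace $E$ by $E'$ and assume $\mathcal{O}=\mathcal{O}_K$. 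Since $j(E)\in\mathbb{Q}$ forces the Hilbert class field $K(j(E))$ to equal $K$, the field $K$ has class number one and $\mathcal{O}_K$ is a principal ideal domain. Also $E_{\mathrm{tors}}$ is free of rank one over $\widehat{\mathcal{O}_K}$, so $E[n]$ is free of rank one over $\mathcal{O}_K/n\mathcal{O}_K$ for every $n$; since the endomorphisms of $E$ are all defined over $K$, the image $\rho_{E,n}(G_K)$ therefore lies in $\operatorname{Aut}_{\mathcal{O}_K/n\mathcal{O}_K}(E[n])=(\mathcal{O}_K/n\mathcal{O}_K)^{\times}$, and $\operatorname{Gal}(K(E[n])/K)\cong\rho_{E,n}(G_K)$.

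Now fix a Weber function $h_E\colon E\to\mathbb{P}^{1}$, a nonconstant morphism defined over $K(j(E))=K$ whose fibres on $E$ are precisely the orbits of the unit group $\mu_K:=\mathcal{O}_K^{\times}$ acting through $\operatorname{End}(E)=\mathcal{O}_K$. By the Main Theorem of Complex Multiplication, in the form of Kronecker's \emph{Jugendtraum} (see e.g. \cite{silverman1994advanced}), the ray class field $K_n$ of $K$ of conductor $n\mathcal{O}_K$ equals $K(h_E(E[n]))$, and this lies inside $K(E[n])$ because $h_E$ is defined over $K$. Any $\tau\in\operatorname{Gal}(K(E[n])/K_n)$ fixes every $h_E(P)$ and hence sends each $P\in E[n]$ into its $\mu_K$-orbit; since $\tau$ commutes with $\mathcal{O}_K$ and $E[n]$ is cyclic over $\mathcal{O}_K$, it follows that $\tau$ acts on $E[n]$ as a single scalar from $\mu_K$, so $\operatorname{Gal}(K(E[n])/K_n)$ embeds into $\mu_K$ and $[K(E[n]):K_n]\le|\mu_K|\le 6$. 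Using the class field theory formula $[K_n:K]=|(\mathcal{O}_K/n\mathcal{O}_K)^{\times}|/|\mu_K\bmod n|$ together with $|\rho_{E,n}(G_K)|=[K(E[n]):K]=[K(E[n]):K_n]\cdot[K_n:K]$, we obtain
\[
\bigl[(\mathcal{O}_K/n\mathcal{O}_K)^{\times}:\rho_{E,n}(G_K)\bigr]=\frac{|\mu_K\bmod n|}{[K(E[n]):K_n]}\le|\mu_K|\le 6 .
\]
As the index of a closed subgroup of a profinite group is the supremum of its indices in the finite quotients,
\[
\bigl[(\widehat{\mathcal{O}_K})^{\times}:\rho_E(G_K)\bigr]=\sup_{n\ge 1}\bigl[(\mathcal{O}_K/n\mathcal{O}_K)^{\times}:\rho_{E,n}(G_K)\bigr]\le 6 ,
\]
and by the reduction of the second paragraph this gives $[(\widehat{\mathcal{O}})^{\times}:\rho_E(G_K)]<\infty$, which is the theorem.

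The single deep ingredient is the identification $K_n=K(h_E(E[n]))$, i.e. the Main Theorem of Complex Multiplication; granted it, the rest is class field theory for $K$ together with the elementary observation that $\operatorname{Gal}(K(E[n])/K_n)$ is trapped inside the finite scalar group $\mu_K$. I expect the only real care is needed in tracking the small-level discrepancy introduced by $\mu_K$ — which exceeds $\{\pm1\}$ exactly when $K=\mathbb{Q}(i)$ or $K=\mathbb{Q}(\sqrt{-3})$, the two cases where $E$ can moreover have CM by a non-maximal order — but since $|\mu_K|\le 6$ is bounded uniformly, this never affects finiteness of the index.
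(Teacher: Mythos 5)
Your argument is correct, and it is essentially the classical proof of this result: the paper itself gives no proof (Theorem \ref{thm:2.6} is simply quoted from Serre), and what you write is the standard deduction from the Main Theorem of Complex Multiplication, namely that $K(h_E(E[n]))$ is the ray class field $K_n$ of conductor $n\mathcal{O}_K$, that $\operatorname{Gal}(K(E[n])/K_n)$ is trapped in the scalar group $\mu_K$ because $E[n]$ is $\mathcal{O}_K/n\mathcal{O}_K$-cyclic and the Weber fibres are $\mu_K$-orbits, and that the bounded indices at each finite level pass to the profinite limit since $\rho_E(\operatorname{Gal}(\overline{\mathbb{Q}}/K))$ is closed. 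The only place you wave your hands is the reduction to the maximal order: the assertion that an isogeny (defined over a finite extension $L$ of $K$) identifies the adelic images of $\operatorname{Gal}(\overline{\mathbb{Q}}/L)$ for $E$ and $E'$ up to commensurability is true and standard --- the isogeny identifies the rational adelic Tate modules Galois-equivariantly, so the two images are conjugate there and their intersections with the compact groups $(\widehat{\mathcal{O}})^{\times}$ and $(\widehat{\mathcal{O}_K})^{\times}$ have comparable indices --- but it deserves the two or three lines it takes, since the statement you are proving concerns the index in $(\widehat{\mathcal{O}})^{\times}$ for the possibly non-maximal order, and one must also note that passing from $G_K$ to the open subgroup $G_L$ only costs a finite index. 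With that spelled out, the proof is complete and even gives the sharper bound that the index divides $\#\mathcal{O}_K^{\times}\le 6$, consistent with the refinements of Lozano-Robledo quoted in Section \ref{sec:3} of the paper.
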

 	Theorem \ref{thm:2.6} states that, $\operatorname{Gal}(K(E[n])/K)$ is a subgroup of $(\mathcal{O}/n\mathcal{O})^{\times}$, and as an analogue to Theorem \ref{thm:2.5}, there is a positive integer $m$ such that, for each positive integer $n$, there is a canonical projection
 	\begin{equation*}
 		\pi:(\mathcal{O}/n\mathcal{O})^{\times}\to(\mathcal{O}/\gcd(n,m)\mathcal{O})^{\times}
 	\end{equation*}
 	and we obtain
 	\begin{equation}\label{eq:(2.9)}
 		\operatorname{Gal}(K(E[n])/K)\cong\pi^{-1}\operatorname{Gal}(K(E[\gcd(n,m)])/K).
 	\end{equation}
 	We observe that \ref{eq:(2.9)} holds for any multiple of $m$. For a CM elliptic curve $E/\mathbb{Q}$, let $m_E$ be the smallest positive integer that satisfies \ref{eq:(2.9)} and for which $4\ell\mid m_E$ where the primes $\ell$ ramifies in the ring $\mathcal{O}$. We would like to note that by the ramification of primes $\ell$ in an  order $\mathcal{O}$ of an imaginary quadratic order $K$, we mean the primes $\ell\mid\triangle_K$ where $\triangle_K$ denotes the discriminant of $K$ but not the conductor of $\mathcal{O}$. For odd unramified primes $\ell\nmid m_E$, we have
 	\begin{equation}\label{eq:(2.10)}
 		\operatorname{Gal}(K(E[\ell])/K)\cong(\mathcal{O}/\ell\mathcal{O})^{\times}.
 	\end{equation}
 Jones interpreted $C_{E,r}$ as
 	\begin{equation}\label{eq:(2.11)}
 		C_{E,r}=\frac{m_E}{2\pi}\frac{|(\operatorname{Gal}(K(E[m_E])/K))_r|}{|\operatorname{Gal}(K(E[m_E])/K)|}\prod_{\ell\nmid m_E}\frac{\ell\cdot|((\mathcal{O}/\ell\mathcal{O})^{\times})_r|}{|(\mathcal{O}/\ell\mathcal{O})^{\times}|}.
 	\end{equation}
 	The infinite product over primes $\ell\nmid m_E$ is absolutely convergent. Explicitly computing the infinite product, one obtains
 	\begin{align}\label{eq:(2.12)}
 		C_{E,r}=\frac{m_E}{2\pi}\frac{|(\operatorname{Gal}(K(E[m_E])/K))_r|}{|\operatorname{Gal}(K(E[m_E])/K)|}\prod_{\substack{\ell\nmid m_E\\ \ell\mid r}}\frac{\ell}{\ell-\chi\mathcal{O}(\ell)}\prod_{\substack{\ell\nmid m_E\\ \ell\nmid r}}\frac{\ell^2-(1+\chi\mathcal{O}(\ell))\ell}{(\ell-1)(\ell-\chi\mathcal{O}(\ell))}\\
 		=\frac{m_E}{2\pi}\frac{|(\operatorname{Gal}(K(E[m_E])/K))_r|}{|\operatorname{Gal}(K(E[m_E])/K)|}\prod_{\substack{\ell\nmid m_E\\ \ell\mid r}}\frac{\ell}{\ell-\chi\mathcal{O}(\ell)}\prod_{\substack{\ell\nmid m_E\\ \ell\nmid r}}\left(1-\frac{\chi\mathcal{O}(\ell)}{(l-1)(l-\chi\mathcal{O}(l))}\right),
 	\end{align}
 	where
 	\begin{equation}\label{eq:(2.14)}
 		\chi\mathcal{O}(\ell)=\begin{cases}
 			1 & \text{if } \ell \text{ splits in }\mathcal{O},\\
 			-1 & \text{if } \ell \text{ is inert in }\mathcal{O}.
 		\end{cases}
 	\end{equation}
 	Here we would like to remark that the notion of splitting makes sense because $\mathcal{O}$ has class number $1$. Moreover, the character $\chi\mathcal{O}(\ell)$ in \ref{eq:(2.14)} is the Legendre symbol for rings of the form $\mathbb{Z}[\sqrt{d}]$.
 	\begin{center}
 		\section{\textbf{The $p$-adic images of the associated Galois representations upto conjugation}}\label{sec:3}
 	\end{center}
	In this section, we follow Lozano-Robledo's work to note the description of the p-adic Galois representations that are associated with elliptic curves $E$ with CM by an order $\mathcal{O}$ of the imaginary quadratic field $K$ with conductor $f\ge1$ defined over $\mathbb{Q}(j(E))$, where $j(E)=j_{K,f}$ is an arbitrary Galois conjugate of the $j$-invariant $j(\mathbb{C}/\mathcal{O})$, associated to the order $\mathcal{O}$ when regarded as complex lattice. In other words, one can describe the possible images of
	\begin{equation*}
		\rho_{E,p^{\infty}}:\operatorname{Gal}(\overline{\mathbb{Q}(j_{K,f})}/\mathbb{Q}(j_{K,f}))\to\operatorname{GL}_2(\mathbb{Z}_p)
	\end{equation*}
	for any prime $p\ge2$, and any elliptic curve $E/\mathbb{Q}(j_{K,f})$ with CM and $j$-invariant $j_{K,f}\neq0,1728$ as a subgroup of $\operatorname{GL}_2(\mathbb{Z}_p)$, up to conjugation. We will also mention the relevant results that describe the groups $\operatorname{Gal}(K(j_{K,f},E[N])/K(j_{K,f}))$ and $\operatorname{Gal}(K(j_{K,f},E[N])/\mathbb{Q}(j_{K,f}))$, which in turn would help us to understand the results on the description the image of $\rho_{E,p^{\infty}}$ for odd primes $p$ of bad reduction and the $2$-adic images, respectively. We would like to remind again that we only note the work done by Lozano-Robledo in the case of $j_{K,f}\neq 0,1728$ for our computations in section \ref{sec:5}.\\
	\\
	Let $n\ge1$ be an integer, $R$ a commutative ring, and $\delta,\phi\in R$. Then the Cartan subgroup of matrices defined as
	\begin{equation*}
		\mathscr{C}_{\delta,\phi}(R)=\biggl\{\begin{pmatrix}
			a+b\phi & b\\
			\delta b & a
		\end{pmatrix}:a,b\in R,\, a^2+ab\phi-\delta b^2\in R^{\times}\biggr\}
	\end{equation*}
	is a subgroup of $\operatorname{GL}_2(R)$, for any $\delta,\phi\in R$. Moreover, if $R$ is a local domain of characteristic $0$, then the normalizer of $\mathscr{C}_{\delta,\phi}(R)$ is defined as $\mathscr{N}_{\delta,\phi}(R)=\big<\mathscr{C}_{\delta,\phi}(R),c_{\phi}\big>$ where $c_{\phi}=\begin{pmatrix}
		-1 & 0\\ \phi & 1
	\end{pmatrix}$. We denote $\mathscr{C}_{\delta,\phi}(\mathbb{Z}/N\mathbb{Z})$ and $\mathscr{N}_{\delta,\phi}(\mathbb{Z}/N\mathbb{Z})$ as $\mathscr{C}_{\delta,\phi}(N)$ and $\mathscr{N}_{\delta,\phi}(N)$, respectively. Let $N\ge2$, $H_f=K(j_{K,f})$ and $h$ be a fixed Weber function.\\
\begin{lemma}[Lozano-Robledo, \cite{lozano2022galois}, Lemma $6.2$]
	Let $E$ be an elliptic curve with CM by an order $\mathcal{O}$ of an imaginary quadratic field $K$ with conductor $f\ge1$, and $j(E)=j_{K,f}$. Fix a $\mathbb{Z}/N\mathbb{Z}$-basis of $E[N]$ and let $\rho_{E,N}:\operatorname{Gal}(\overline{\mathbb{Q}(j_{K,f})}/\mathbb{Q}(j_{K,f}))\to\operatorname{Aut}(E[N])\cong\operatorname{GL}_2(\mathbb{Z}/N\mathbb{Z})$ be the representation afforded by the action of Galois on $E[N]$. Let $G_{E,K,N}=\rho_{E,N}(\operatorname{Gal}(\overline{H_f}/H_f))$, where $H_f =K(j_{K, f})$, and let $G_{E,N}$ be the image of $\rho_{E,N}$ . Then:\\
	\\$(1) $ $G_{E,N}$ is contained in the normalizer of $G_{E,K,N}$ in $\operatorname{GL}_2(\mathbb{Z}/N\mathbb{Z})$.\\
	$(2)$ If we pick a $\mathbb{Z}/N\mathbb{Z}$-basis of $E[N]$ so that $G_{E,K,N}$ is contained in $\mathscr{C}_{\delta,\phi}(N)$ with $(\delta,\phi)=(\triangle_K,0)$ or $((\triangle_K-1)f^2/4,f)$, then $G_{E,N}$ is contained in $\mathscr{N}_{\delta,\phi}(N)=\big<\mathscr{C}_{\delta,\phi}(N),c_\phi\big>$.
\end{lemma}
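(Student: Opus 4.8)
The plan is to build everything on one structural observation: $H_f=K(j_{K,f})$ is a quadratic extension of $\mathbb{Q}(j_{K,f})$. Indeed $H_f=K\cdot\mathbb{Q}(j_{K,f})$, so restriction embeds $\operatorname{Gal}(H_f/\mathbb{Q}(j_{K,f}))$ into $\operatorname{Gal}(K/\mathbb{Q})$, and the embedding is onto because $K\not\subseteq\mathbb{Q}(j_{K,f})$ (classical in CM theory; when $K$ has class number one this is immediate, since then $j_{K,f}\in\mathbb{R}$). Thus, writing $G=\operatorname{Gal}(\overline{\mathbb{Q}(j_{K,f})}/\mathbb{Q}(j_{K,f}))$ and $H=\operatorname{Gal}(\overline{\mathbb{Q}(j_{K,f})}/H_f)$, the subgroup $H$ is normal of index $2$ in $G$ with $G/H\cong\operatorname{Gal}(K/\mathbb{Q})=\{1,c\}$, $c$ being complex conjugation. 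Part $(1)$ then follows at once: for $g\in G$ and $h\in H$ normality gives $ghg^{-1}\in H$, so $\rho_{E,N}(g)\,G_{E,K,N}\,\rho_{E,N}(g)^{-1}=G_{E,K,N}$, i.e. $G_{E,N}$ normalizes $G_{E,K,N}$.

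For part $(2)$ I would first upgrade the basis in the hypothesis to one adapted to the $\mathcal{O}$-module structure of $E[N]$ (this is exactly what the normalizations $(\delta,\phi)=(\triangle_K,0)$ and $((\triangle_K-1)f^2/4,f)$ are designed to encode in \cite{lozano2022galois}), so that $\mathcal{O}=\mathbb{Z}[\tau_0]$ with $\tau_0^2=\phi\tau_0+\delta$ and the matrix of $[\alpha]$ for $\alpha=a+b\tau_0$ is $M_\alpha=\begin{pmatrix}a+b\phi&b\\\delta b&a\end{pmatrix}$; in particular $\mathscr{C}_{\delta,\phi}(N)=\{M_\alpha:\alpha\in(\mathcal{O}/N\mathcal{O})^\times\}$. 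Then I would pick a lift $\widetilde c\in G$ of $c$; since $G_{E,N}=G_{E,K,N}\cup G_{E,K,N}\,\rho_{E,N}(\widetilde c)$ and $G_{E,K,N}\subseteq\mathscr{C}_{\delta,\phi}(N)\subseteq\mathscr{N}_{\delta,\phi}(N)$ by hypothesis, it suffices to prove $T:=\rho_{E,N}(\widetilde c)\in\mathscr{C}_{\delta,\phi}(N)\,c_\phi$. Here I would invoke the classical theory of complex multiplication: all endomorphisms of $E$ are defined over $H_f$, and $c$ acts on $\operatorname{End}(E)=\mathcal{O}$ as complex conjugation, whence $\widetilde c\circ[\alpha]\circ\widetilde c^{-1}=[\bar\alpha]$ on $E$ and therefore $TM_\alpha=M_{\bar\alpha}T$ on $E[N]$, for every $\alpha\in\mathcal{O}$.

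The rest is linear algebra over $\mathbb{Z}/N\mathbb{Z}$. A direct $2\times2$ check, using $\bar\alpha=(a+b\phi)-b\tau_0$ and $c_\phi^2=I$, gives $c_\phi M_\alpha c_\phi^{-1}=M_{\bar\alpha}$ for all $\alpha$; combined with $TM_\alpha=M_{\bar\alpha}T$ this forces $T^{-1}c_\phi$ to commute with every $M_\alpha$, in particular with $M_{\tau_0}=\begin{pmatrix}\phi&1\\\delta&0\end{pmatrix}$. A one-line computation then identifies the centralizer of $M_{\tau_0}$ in $M_2(\mathbb{Z}/N\mathbb{Z})$ with $\mathbb{Z}/N\mathbb{Z}+(\mathbb{Z}/N\mathbb{Z})M_{\tau_0}$, whose unit group is exactly $\mathscr{C}_{\delta,\phi}(N)$ (an element $wI+yM_{\tau_0}$ has determinant $w^2+wy\phi-\delta y^2$, precisely the unit condition cutting out $\mathscr{C}_{\delta,\phi}(N)$). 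Since $T^{-1}c_\phi$ is invertible and lies in that centralizer, $T^{-1}c_\phi\in\mathscr{C}_{\delta,\phi}(N)$, so $T\in c_\phi\,\mathscr{C}_{\delta,\phi}(N)=\mathscr{C}_{\delta,\phi}(N)\,c_\phi\subseteq\mathscr{N}_{\delta,\phi}(N)$, and hence $G_{E,N}\subseteq\mathscr{N}_{\delta,\phi}(N)$.

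The step I expect to be the genuine obstacle is the bookkeeping in the first move of part $(2)$: matching the two abstract normalizations of $(\delta,\phi)$ to an explicit $\mathcal{O}$-adapted basis of $E[N]$, and especially handling primes dividing the conductor $f$, where $E[N]$ is not a free $\mathcal{O}/N\mathcal{O}$-module and one must instead lean on the description of $G_{E,K,N}$ coming from the earlier structural results of \cite{lozano2022galois}. By contrast, the complex-multiplication input (endomorphisms rational over $H_f$, Galois acting through $\operatorname{Gal}(K/\mathbb{Q})$) and the two small matrix computations should be entirely mechanical once that setup is in place.
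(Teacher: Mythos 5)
This lemma is not proved in the paper at all: it is quoted verbatim from Lozano-Robledo \cite{lozano2022galois} (Lemma 6.2), so there is no in-paper argument to compare against; your proposal has to be measured against the original source. Measured that way, your argument is essentially the source's own: part (1) is exactly the normality argument ($H_f=K(j_{K,f})$ is quadratic, hence Galois, over $\mathbb{Q}(j_{K,f})$, so $\operatorname{Gal}(\overline{H_f}/H_f)$ is normal and its $\rho_{E,N}$-image is normalized by $G_{E,N}$), and part (2) is the standard complex-multiplication computation: a lift of the nontrivial element of $\operatorname{Gal}(H_f/\mathbb{Q}(j_{K,f}))$ intertwines the $\mathcal{O}$-action through $\alpha\mapsto\overline{\alpha}$, the matrix $c_\phi$ does the same, and the centralizer of $M_{\tau_0}$ in $M_2(\mathbb{Z}/N\mathbb{Z})$ is the Cartan ring, forcing the lift into the coset $\mathscr{C}_{\delta,\phi}(N)\,c_\phi$. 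Your two matrix computations are correct. The obstacle you flag is smaller than you fear: $E[N]$ \emph{is} free of rank one over $\mathcal{O}/N\mathcal{O}$ even at primes dividing the conductor $f$, because the CM lattice is a proper (invertible, hence locally principal) $\mathcal{O}$-ideal; this is how \cite{lozano2022galois} sets up the adapted basis, and in that basis $G_{E,K,N}\subset\mathscr{C}_{\delta,\phi}(N)$ holds automatically since Galois over $H_f$ acts $\mathcal{O}$-linearly. The only point to state carefully is that your ``upgrade the basis'' step proves the conclusion for the $\mathcal{O}$-adapted basis (i.e., it establishes the existence of a basis with $G_{E,K,N}\subset\mathscr{C}_{\delta,\phi}(N)$ and $G_{E,N}\subset\mathscr{N}_{\delta,\phi}(N)$), rather than for an arbitrary basis merely satisfying the stated containment; that is the reading intended in \cite{lozano2022galois} and the only case used later in this paper, but your write-up should say so explicitly rather than silently replacing the given basis.
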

\begin{theorem}[Lozano-Robledo, \cite{lozano2022galois}, Theorem $1.1$, Theorem $6.3(3)$]\label{thm:3.2}
	Let $E$ be an elliptic curve with CM by an order $\mathcal{O}$ of an imaginary quadratic field $K$ with conductor $f\ge1$, and $j(E)=j_{K,f}$. Let $N\ge3$, and let $\rho_{E,N}:\operatorname{Gal}(\overline{\mathbb{Q}(j_{K,f})}/\mathbb{Q}(j_{K,f}))\to\operatorname{Aut}(E[N])\cong\operatorname{GL}_2(\mathbb{Z}/N\mathbb{Z})$ be the Galois representation. We define groups of $\operatorname{GL}_2(\mathbb{Z}/N\mathbb{Z})$ as follows:
	\begin{itemize}
		\item If $\triangle_Kf^2\equiv0\pmod4$ or $N$ is odd, let $\delta=\triangle_Kf^2/4$, and $\phi=0$.
		\item If $\triangle_Kf^2\equiv1\pmod4$, and $N$ is even, let $\delta=(\triangle_K-1)f^2/4$, and $\phi=f$.
	\end{itemize}
	If we define $\mathscr{C}_{\delta,\phi}(N)$ by
	\begin{equation*}
		\mathscr{C}_{\delta,\phi}(N)=\biggl\{\begin{pmatrix}a+b\phi & b\\\delta b & a\end{pmatrix}:a,b\in\mathbb{Z}/N\mathbb{Z}, a^2+ab\phi-\delta b^2\in(\mathbb{Z}/N\mathbb{Z})^{\times}\biggr\}\subset\operatorname{GL}_2(\mathbb{Z}/N\mathbb{Z}),
	\end{equation*}
	with $\mathscr{N}_{\delta,\phi}(N)=\biggl<\mathscr{C}_{\delta,\phi}(N),c_\phi=\begin{pmatrix}-1 & 0\\\phi & 1\end{pmatrix}\biggr>$, then there is $\mathbb{Z}/N\mathbb{Z}$-basis of $E[N]$ such that the image of $\rho_{E,N}$ is contained in $\mathscr{N}_{\delta,\phi}(N)$.\\
	Moreover:\\
	\\$(1)$$\mathscr{C}_{\delta,\phi}(N)\cong(\mathcal{O}/N\mathcal{O})^{\times}$ is a subgroup of index $2$ in $\mathscr{N}_{\delta,\phi}(N)$.\\
	$(2)$ The index of the image of $\rho$ in $\mathscr{N}_{\delta,\phi}(N)$ coincides with the order of the Galois group $\operatorname{Gal}(K(j_{K,f},E[N])/K(j_{K,f}),h(E[N]))$ ), for a fixed Weber function $h$, and it is a divisor of the order of $\frac{\mathcal{O}^{\times}}{\mathcal{O}_N^{\times}}$, where $\mathcal{O}_N^{\times}=\{u\in\mathcal{O}^{\times}:u\equiv 1\mod N\mathcal{O}\}$.\\
	$(3)$ The index of $\operatorname{Gal}(H_f(E[N])/H_f)$ in $\mathscr{C}_{\delta,\phi}(N)$ is the same as the index of the Galois group $\operatorname{Gal}(H_f(E[N])/\mathbb{Q}(j_{K,f}))$ in $\mathscr{N}_{\delta,\phi}(N)$.
\end{theorem}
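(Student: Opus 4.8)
The plan is to recast the assertion in the classical language of complex multiplication and then repackage it as a statement about matrix subgroups of $\operatorname{GL}_2(\mathbb{Z}/N\mathbb{Z})$; once the matrix model is set up, the three numbered claims become bookkeeping built on the main theorem of CM. I would proceed in three steps.

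\textbf{Step 1: the $\mathcal{O}$-module structure and the Cartan subgroup.} Since $E$ has CM by $\mathcal{O}$ and $j(E)=j_{K,f}\neq 0,1728$, we have $\operatorname{Aut}(E)=\{\pm1\}$ and every endomorphism of $E_{\overline{\mathbb{Q}}}$ is already defined over $H_f=K(j_{K,f})$; hence $E[N]$ is a free $\mathcal{O}/N\mathcal{O}$-module of rank one on which $\operatorname{Gal}(\overline{\mathbb{Q}(j_{K,f})}/H_f)$ acts $\mathcal{O}/N\mathcal{O}$-linearly. Fixing an $\mathcal{O}$-module generator $P$ of $E[N]$ and a generator $\pi$ of $\mathcal{O}/N\mathcal{O}$ over $\mathbb{Z}/N\mathbb{Z}$ with $\pi^{2}-\phi\pi-\delta=0$, the pair $\{P,\pi P\}$ is a $\mathbb{Z}/N\mathbb{Z}$-basis of $E[N]$ in which multiplication by $a+b\pi$ is the matrix $\left(\begin{smallmatrix} a+b\phi & b\\ \delta b & a\end{smallmatrix}\right)$, whose determinant is the norm $a^{2}+ab\phi-\delta b^{2}$; this produces the isomorphism $(\mathcal{O}/N\mathcal{O})^{\times}\cong\mathscr{C}_{\delta,\phi}(N)$ of assertion $(1)$. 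The two possibilities for $(\delta,\phi)$ come from the two normal forms of $\pi$: one may take $\phi=0$, $\delta=\triangle_{K}f^{2}/4$ when $\triangle_{K}f^{2}\equiv0\pmod 4$, or when $N$ is odd (so $2$ is invertible modulo $N$), and one is forced to take $\phi=f$, $\delta=(\triangle_{K}-1)f^{2}/4$ when $\triangle_{K}f^{2}\equiv1\pmod 4$ and $N$ is even.

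\textbf{Step 2: descent to $\mathbb{Q}(j_{K,f})$ and the normalizer.} Every $\sigma\in\operatorname{Gal}(\overline{\mathbb{Q}(j_{K,f})}/\mathbb{Q}(j_{K,f}))$ restricts on $K\subseteq H_f$ to the identity or to complex conjugation, and $\sigma\circ[\alpha]\circ\sigma^{-1}=[\sigma\alpha]$ for $\alpha\in\mathcal{O}$; hence $\rho_{E,N}(\sigma)$ conjugates $\mathscr{C}_{\delta,\phi}(N)$ into itself, either trivially or through the automorphism $a+b\pi\mapsto\overline{a+b\pi}=(a+b\phi)-b\pi$. A direct matrix computation shows this automorphism is effected by conjugation by $c_\phi=\left(\begin{smallmatrix} -1 & 0\\ \phi & 1\end{smallmatrix}\right)$, that $c_\phi^{2}=I$, and that $c_\phi\notin\mathscr{C}_{\delta,\phi}(N)$ for $N\geq3$, so $\mathscr{C}_{\delta,\phi}(N)$ has index $2$ in $\mathscr{N}_{\delta,\phi}(N)=\langle\mathscr{C}_{\delta,\phi}(N),c_\phi\rangle$, finishing $(1)$, and the image $G_{E,N}$ of $\rho_{E,N}$ lies in $\mathscr{N}_{\delta,\phi}(N)$ for the stated $(\delta,\phi)$. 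Moreover, since $K\not\subseteq\mathbb{Q}(j_{K,f})$ there is some $\sigma$ restricting to complex conjugation on $K$, and for $N\geq3$ such a $\sigma$ does not act $\mathcal{O}$-linearly on $E[N]$ (complex conjugation being nontrivial on $\mathcal{O}/N\mathcal{O}$), so $\rho_{E,N}(\sigma)\notin\mathscr{C}_{\delta,\phi}(N)$; hence $G_{E,N}$ surjects onto $\mathscr{N}_{\delta,\phi}(N)/\mathscr{C}_{\delta,\phi}(N)$ and $G_{E,N}\cap\mathscr{C}_{\delta,\phi}(N)=G_{E,K,N}$.

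\textbf{Step 3: the index identities.} The last point gives $|G_{E,N}|=2\,|G_{E,K,N}|$, hence $[\mathscr{N}_{\delta,\phi}(N):G_{E,N}]=2\,|\mathscr{C}_{\delta,\phi}(N)|/(2\,|G_{E,K,N}|)=[\mathscr{C}_{\delta,\phi}(N):G_{E,K,N}]$; and the same $\sigma$ forces $H_f\subseteq\mathbb{Q}(j_{K,f})(E[N])$, so $\mathbb{Q}(j_{K,f})(E[N])=H_f(E[N])$, whence $G_{E,N}\cong\operatorname{Gal}(H_f(E[N])/\mathbb{Q}(j_{K,f}))$ and $G_{E,K,N}\cong\operatorname{Gal}(H_f(E[N])/H_f)$. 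This already yields $(3)$. For $(2)$ I would invoke the main theorem of complex multiplication: through the reciprocity map, $H_f(h(E[N]))$ is the ray class field of the order $\mathcal{O}$ of the relevant conductor and $\operatorname{Gal}(H_f(E[N])/H_f)=G_{E,K,N}$ is realized inside $(\mathcal{O}/N\mathcal{O})^{\times}$, while the remaining extension $H_f(E[N])/H_f(h(E[N]))$, obtained by adjoining the $N$-torsion points above their Weber values, is controlled by the roots of unity of $\mathcal{O}$, i.e.\ by $\mathcal{O}^{\times}/\mathcal{O}_N^{\times}$. Comparing orders shows that $[\mathscr{C}_{\delta,\phi}(N):G_{E,K,N}]$ equals the order of $\operatorname{Gal}(H_f(E[N])/H_f(h(E[N])))$ (for the normalization of $h$ used in the statement) and divides $|\mathcal{O}^{\times}/\mathcal{O}_N^{\times}|$, which is $(2)$.

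\textbf{Expected main obstacle.} The subtle part is Step 3 for a non-maximal order ($f>1$): the main theorem of CM, and in particular the identification of $H_f(h(E[N]))$ with a ray class field together with the description of the Galois action by the reciprocity map, is cleanest for the maximal order $\mathcal{O}_K$, and transporting the full dictionary --- the precise conductor, the choice and normalization of the Weber function $h$, and the exact contribution of $\mathcal{O}^{\times}$ --- to an arbitrary order $\mathbb{Z}+f\mathcal{O}_K$ takes real care. A secondary source of friction is the case of even $N$ with $\triangle_{K}f^{2}\equiv1\pmod 4$, where $\phi=f\neq0$ and the basis $\{1,\pi\}$ and the element $c_\phi$ are less transparent, so the normalizer and coset statements of Steps 1--2 have to be checked by hand rather than read off from an orthogonal-type model.
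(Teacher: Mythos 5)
First, a point of reference: the paper itself gives no proof of this statement at all --- Theorem \ref{thm:3.2} is imported verbatim (indeed somewhat garbled, note the unbalanced parenthesis in item $(2)$) from Lozano-Robledo as background input, so there is no internal argument to compare yours with; what you have written is a reconstruction of the cited source's proof. As such, your Steps 1--2 and the deduction of $(3)$ are essentially the right argument: the rank-one $\mathcal{O}/N\mathcal{O}$-structure of $E[N]$, the two normal forms of $(\delta,\phi)$ according to $\triangle_Kf^2 \bmod 4$ and the parity of $N$, the semilinearity $\sigma[\alpha]\sigma^{-1}=[\sigma\alpha]$ forcing the image into the normalizer, the computation that $c_\phi$ realizes $a+b\pi\mapsto(a+b\phi)-b\pi$, the fact that an element restricting to complex conjugation on $K$ cannot land in the Cartan because conjugation is nontrivial on $\mathcal{O}/N\mathcal{O}$ for $N\ge3$ (your reduction to $N\nmid 2$ is correct), and the resulting $|G_{E,N}|=2|G_{E,K,N}|$ with $K\subset\mathbb{Q}(j_{K,f})(E[N])$. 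Two caveats: the freeness of $E[N]$ over $\mathcal{O}/N\mathcal{O}$ for a non-maximal order is itself a theorem (quadratic orders are Gorenstein; this is Lozano-Robledo's Theorem $4.1$), not the formal consequence your ``hence'' suggests; and your matrix for multiplication by $a+b\pi$ matches the stated Cartan only after reordering/transposing the basis, a convention point worth fixing.

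The genuine gap is the ``comparing orders'' step in your treatment of $(2)$. Granting S\"ohngen's theorem that $H_f(h(E[N]))$ is the ray class field $H_{N,\mathcal{O}}$, whose degree over $H_f$ is $|(\mathcal{O}/N\mathcal{O})^{\times}|/|\mathcal{O}^{\times}/\mathcal{O}_N^{\times}|$, and identifying $\operatorname{Gal}(H_f(E[N])/H_f(h(E[N])))$ with $G_{E,K,N}\cap\overline{\mathcal{O}^{\times}}$ (two torsion points have the same Weber value iff they differ by an automorphism), the order count gives $[\mathscr{C}_{\delta,\phi}(N):G_{E,K,N}]=[\mathcal{O}^{\times}/\mathcal{O}_N^{\times}:\operatorname{Gal}(H_f(E[N])/H_f(h(E[N])))]$, i.e.\ the index is the \emph{complementary} index inside $\mathcal{O}^{\times}/\mathcal{O}_N^{\times}$, not the order of that Galois group as you assert. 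A concrete check with a curve from this very paper: for $E_2^{*}:y^{2}=x^{3}-15x+22$ and $N=3$ one has $\operatorname{Gal}(K(E[3])/K)\cong(\mathcal{O}/3\mathcal{O})^{\times}/\{\pm1\}$ of index $2$ in the Cartan, while $K(E[3])=K(\sqrt[3]{2})=H_{3,\mathcal{O}}$ already coincides with the Weber field, so the Galois group you compare against is trivial. Your divisibility claim (the index divides $|\mathcal{O}^{\times}/\mathcal{O}_N^{\times}|$) does survive, and the paper's own transcription of $(2)$ arguably contains the same conflation, but as a proof of the actual assertion the unit group must be kept explicit in this final bookkeeping; everything else in your sketch, including the honest flagging of the $f>1$ ray-class-field subtlety, is in order.
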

\begin{theorem}[Lozano-Robledo, \cite{lozano2022galois}, Theorem $1.2(1),(2)$]
	Let $E$ be an elliptic curve with CM by an order $\mathcal{O}$ of an imaginary quadratic field $K$ with conductor $f\ge1$, and $j(E)=j_{K,f}$. Let $N\ge3$, and let $\rho_{E,N}:\operatorname{Gal}(\overline{\mathbb{Q}(j_{K,f})}/\mathbb{Q}(j_{K,f}))\to\operatorname{Aut}(E[N])\cong\operatorname{GL}_2(\mathbb{Z}/N\mathbb{Z})$ be the Galois representation. We define groups of $\operatorname{GL}_2(\mathbb{Z}/N\mathbb{Z})$ as follows:
	\begin{itemize}
		\item If $\triangle_Kf^2\equiv0\pmod4$ or $N$ is odd, let $\delta=\triangle_Kf^2/4$, and $\phi=0$.
		\item If $\triangle_Kf^2\equiv1\pmod4$, and $N$ is even, let $\delta=(\triangle_K-1)f^2/4$, and $\phi=f$.
	\end{itemize}
	Then:\\
	\\
	$(1)$ Let $\rho_{E}:\operatorname{Gal}(\overline{\mathbb{Q}(j_{K,f})}/\mathbb{Q}(j_{K,f}))\to\lim\limits_{\overleftarrow{N}}\operatorname{Aut}(E[N])\cong\operatorname{GL}_2(\hat{Z})$ and let $\mathscr{N}_{\delta,\phi}=\lim\limits_{\overleftarrow{N}}\mathscr{N}_{\delta,\phi}(N)$. Then, there is a compatible system of bases of $E[N]$ such that the image of $\rho(E)$ is contained in $\mathscr{N}_{\delta,\phi}$, and the index of the image of $\rho_{E}$ in $\mathscr{N}_{\delta,\phi}$ is a divisor of the order of $\mathcal{O}^{\times}$. In particular, the index is a divisor of $4$ or $6$. Moreover, for every $K$ and $f\ge1$, and a fixed $N\ge3$, there is an elliptic curve $E/\mathbb{Q}(j_{K,f})$ such that the image of $\rho_{E,N}$ is precisely $\mathscr{N}_{\delta,\phi}(N)$.\\
	$(2)$ Let $p>2$ and $j_{K,f}\neq0$, or $p>3$. Let $G_{E,p^{\infty}}\subset\mathscr{N}_{\delta,\phi}(p^{\infty})$ be the image of $\rho_{E,p^{\infty}}$ with respect to a suitable basis of $T_p(E)$, and let $G_{E,p}\subset\mathscr{N}_{\delta,\phi}(p)$  be the image of $\rho_{E,p}\equiv \rho_{E,p^{\infty}}\mod p$ is the full inverse image of $G_{E,p}$ via the reduction $\mod p$ map $\mathscr{N}_{\delta,\phi}(p^{\infty})\to\mathscr{N}_{\delta,\phi}(p)$.
\end{theorem}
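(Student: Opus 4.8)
The plan is to bootstrap from the finite-level description in Theorem~\ref{thm:3.2} and pass to the inverse limit. First I would record the module-theoretic picture. Since $E/\mathbb{Q}(j_{K,f})$ has CM by $\mathcal{O}$, each Tate module $T_p(E)$ is free of rank $1$ over $\mathcal{O}_p:=\mathcal{O}\otimes_{\mathbb{Z}}\mathbb{Z}_p$, and the action of $\operatorname{Gal}(\overline{H_f}/H_f)$, with $H_f=K(j_{K,f})$, is $\mathcal{O}_p$-linear because the CM is defined over $H_f$. Fixing an $\mathcal{O}_p$-generator $v$ of $T_p(E)$ together with a $\mathbb{Z}$-basis $\{1,\theta\}$ of $\mathcal{O}$ whose minimal polynomial is $X^2-\phi X-\delta$ --- which is exactly where the pair $(\delta,\phi)$ comes from --- identifies the image of $\rho_{E,p^\infty}$ on $\operatorname{Gal}(\overline{H_f}/H_f)$ with a subgroup of $\mathscr{C}_{\delta,\phi}(p^\infty)\cong(\mathcal{O}_p)^\times$, acting in the $\mathbb{Z}_p$-basis $\{v,\theta v\}$. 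The extension $H_f/\mathbb{Q}(j_{K,f})$ has degree $2$ (as $[\mathbb{Q}(j_{K,f}):\mathbb{Q}]=[H_f:K]$ while $[K:\mathbb{Q}]=2$), and its nontrivial automorphism restricts to $K$, hence to $\mathcal{O}$, as conjugation $\theta\mapsto\overline{\theta}=\phi-\theta$; this forces any lift of it to act conjugate-linearly on $T_p(E)$, i.e.\ through the nontrivial coset $\mathscr{N}_{\delta,\phi}(p^\infty)\setminus\mathscr{C}_{\delta,\phi}(p^\infty)$, and after adjusting the basis one checks this coset is represented by the matrix $c_\phi$. Choosing the generators $v$ compatibly as $N$ varies assembles the containment $\rho_E\bigl(\operatorname{Gal}(\overline{\mathbb{Q}(j_{K,f})}/\mathbb{Q}(j_{K,f}))\bigr)\subseteq\mathscr{N}_{\delta,\phi}$.

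For the index I would invoke the main theorem of complex multiplication in the form already recorded in Theorem~\ref{thm:3.2}(2): the field $H_f\bigl(h(E[N])\bigr)$ obtained by adjoining Weber-function values lies in the ray class field of $K$ of modulus $N$, so $\operatorname{Gal}\bigl(H_f(E[N])/H_f(h(E[N]))\bigr)$ embeds into $\mathcal{O}^\times/\mathcal{O}_N^\times$. Hence the index of $\rho_{E,N}(\operatorname{Gal}(\overline{H_f}/H_f))$ in $\mathscr{C}_{\delta,\phi}(N)\cong(\mathcal{O}/N\mathcal{O})^\times$ divides $|\mathcal{O}^\times/\mathcal{O}_N^\times|$, hence divides $|\mathcal{O}^\times|\in\{2,4,6\}$; and since $[H_f:\mathbb{Q}(j_{K,f})]=2$ while automorphisms nontrivial on $K$ land outside the Cartan, the index of $\rho_{E,N}(\operatorname{Gal}(\overline{\mathbb{Q}(j_{K,f})}/\mathbb{Q}(j_{K,f})))$ in $\mathscr{N}_{\delta,\phi}(N)$ is that same divisor of $|\mathcal{O}^\times|$. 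These indices are monotone under divisibility of $N$ and uniformly bounded, so they stabilize, and the stable value is the index of $\rho_E$ in $\mathscr{N}_{\delta,\phi}$; this gives divisibility by $4$ or $6$. The maximality clause then reduces to producing, for given $K$, $f$ and $N\ge3$, a single twist $E$ of a curve with $j$-invariant $j_{K,f}$ for which $H_f(E[N])$ is exactly the ray class field of modulus $N$ --- equivalently, for which the index above is $1$ --- and for such a twist the image over $\mathbb{Q}(j_{K,f})$ fills out all of $\mathscr{N}_{\delta,\phi}(N)$; the existence of the twist follows from the classical determination of $H_f(h(E[N]))$ together with an analysis of the quadratic, quartic or sextic twists available over $\mathbb{Q}(j_{K,f})$.

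For part (2), the hypotheses ``$p>2$ and $j_{K,f}\neq 0$'' or ``$p>3$'' are precisely the condition $\gcd(p,|\mathcal{O}^\times|)=1$. Let $\mathscr{K}$ be the kernel of the reduction $\mathscr{N}_{\delta,\phi}(p^\infty)\to\mathscr{N}_{\delta,\phi}(p)$; it is a normal pro-$p$ subgroup. By the previous paragraph the index of $G_{E,p^\infty}$ in $\mathscr{N}_{\delta,\phi}(p^\infty)$ divides $|\mathcal{O}^\times|$, hence is prime to $p$. Since $\mathscr{K}$ is normal, $[G_{E,p^\infty}\mathscr{K}:G_{E,p^\infty}]=[\mathscr{K}:G_{E,p^\infty}\cap\mathscr{K}]$ is simultaneously a power of $p$ and a divisor of $[\mathscr{N}_{\delta,\phi}(p^\infty):G_{E,p^\infty}]$, so it equals $1$; thus $\mathscr{K}\subseteq G_{E,p^\infty}$, which says exactly that $G_{E,p^\infty}$ is the full preimage of $G_{E,p}$ under reduction mod $p$.

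The real difficulty is concentrated in the maximality assertion of part (1): bounding the index is soft, but exhibiting a twist that realizes the full normalizer $\mathscr{N}_{\delta,\phi}(N)$ requires the precise arithmetic of the Weber function and the ray class fields of $K$ --- in particular the fact that for a suitable twist the torsion field $H_f(E[N])$ is no smaller than the relevant ray class field --- together with the verification that the conjugation coset really is nontrivial in $\operatorname{Gal}(H_f(E[N])/\mathbb{Q}(j_{K,f}))$; likewise, pinning down that the coset outside the Cartan is represented by the specific matrix $c_\phi$ (rather than some other element of $\mathscr{N}_{\delta,\phi}(N)\setminus\mathscr{C}_{\delta,\phi}(N)$) takes an explicit computation with the CM action on a carefully normalized basis.
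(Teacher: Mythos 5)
This statement is not proved in the paper at all: it is quoted verbatim as Lozano-Robledo's Theorem $1.2(1),(2)$ from \cite{lozano2022galois} and used later as a black box, so there is no in-paper argument to compare yours against. Judged on its own, your sketch does reconstruct the route taken in the cited source: $T_p(E)$ is free of rank one over $\mathcal{O}\otimes\mathbb{Z}_p$, the subgroup $\operatorname{Gal}(\overline{H_f}/H_f)$ acts $\mathcal{O}$-linearly and hence lands in $\mathscr{C}_{\delta,\phi}$, automorphisms nontrivial on $K$ act conjugate-linearly and (by the complex-conjugation/Zarhin-type input already recorded around Theorem \ref{thm:3.6}) lie outside the Cartan, the index bound comes from the Weber function and ray class fields as in Theorem \ref{thm:3.2}(2), and your part (2) argument --- the reduction kernel $\mathscr{N}_{\delta,\phi}(p^{\infty})\to\mathscr{N}_{\delta,\phi}(p)$ is pro-$p$ while the index of $G_{E,p^{\infty}}$ divides $|\mathcal{O}^{\times}|$, which is prime to $p$ exactly under the stated hypotheses ($p>2$, $j_{K,f}\neq0$ forces $|\mathcal{O}^{\times}|\in\{2,4\}$) --- is complete and correct as written.

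The genuine gap is the one you flag yourself, and it is not minor: the final clause of part (1), that for every $K$, $f$ and $N\ge3$ some $E/\mathbb{Q}(j_{K,f})$ has image exactly $\mathscr{N}_{\delta,\phi}(N)$, is only gestured at. Saying it ``reduces to producing a twist with index $1$'' restates the problem rather than solving it: one must actually control how the index $\operatorname{Gal}\bigl(H_f(E[N])/H_f(h(E[N]))\bigr)$ varies across the quadratic (and, for $j=0,1728$, quartic and sextic) twist families and show some twist kills it, and separately verify that the coset outside the Cartan is realized with representative $c_{\phi}$ after the basis normalization; both points require the explicit CM computations that occupy a substantial part of \cite{lozano2022galois}. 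Also note a small wrinkle in your limit step: the index of the adelic image in $\mathscr{N}_{\delta,\phi}$ is the supremum of the finite-level indices of the \emph{projections} of the adelic image, so you should say why uniform boundedness by $|\mathcal{O}^{\times}|$ at each level (which you do get from Theorem \ref{thm:3.2}(2)) passes to the closed image in the inverse limit; this is true but deserves the one-line profinite argument rather than the phrase ``they stabilize.''
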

Let $E/\mathbb{Q}(j_{K,f})$ be an elliptic curve with CM by an order $\mathcal{O}$ with conductor $f$ in an imaginary quadratic field $K$, and let $N\ge3$. Then there is a Galois representation,
\begin{equation*}
	\rho_{E,N}:\operatorname{Gal}(\overline{\mathbb{Q}(j_{K,f})}/\mathbb{Q}(j_{K,f}))\to\operatorname{Aut}(E[N]).
\end{equation*}
The $N$-torsion of $E$ is a free $\mathcal{O}/N\mathcal{O}$-module of rank one, and as in [\cite{lozano2022galois}, Theorem $4.1$] one has for $H_f=K(j_{K,f})$, the restricted representation:
\begin{equation*}
	\operatorname{Gal}(H_f(E[N])/H_f)\hookrightarrow\operatorname{Aut}_{\mathcal{O}/N\mathcal{O}}(E[N])\cong\left(\frac{\mathcal{O}}{N\mathcal{O}}\right)^{\times}.
\end{equation*}
On the other hand, if one fixes an embedding of $\mathbb{Q}(j_{K,f})$ into $\mathbb{C}$, then complex conjugation in $\mathbb{C}$ induces a complex conjugation $c\in\operatorname{Gal}(\overline{\mathbb{Q}(j_{K,f})}/\mathbb{Q}(j_{K,f}))$.  By [\cite{lozano2022galois}, Lemma $5.5$] the image $\rho_{E,N}(\operatorname{Gal}(\overline{\mathbb{Q}(j_{K,f})}/\mathbb{Q}(j_{K,f})))$ is contained in $\mathscr{N}_{\delta.\phi}(N)$. The following results tell us where a complex conjugation lands via $\rho_{E,N}$.\\
\begin{lemma}[Lozano-Robledo, \cite{lozano2022galois}, Lemma $6.5$]
	Let $c\in\operatorname{Gal}(\overline{\mathbb{Q}(j_{K,f})}/\mathbb{Q}(j_{K,f}))$ be a complex conjugation. Then, $\rho_{E,N}(c)$ is a matrix in $\operatorname{GL}_2(\mathbb{Z}/N\mathbb{Z})$ with determinant $-1\mod N$ and zero trace.
\end{lemma}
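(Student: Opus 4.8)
The plan is to establish the two assertions separately, each as a consequence of a standard structural fact about $\rho_{E,N}$ combined with the fact that a complex conjugation is an involution. Throughout write $M=\rho_{E,N}(c)\in\operatorname{GL}_2(\mathbb{Z}/N\mathbb{Z})$; note that the very statement that $\rho_{E,N}(c)$ is such a matrix is automatic from $\operatorname{Aut}(E[N])\cong\operatorname{GL}_2(\mathbb{Z}/N\mathbb{Z})$, so the real content is the determinant and the trace.

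For the determinant, I would invoke the Weil pairing $e_N\colon E[N]\times E[N]\to\mu_N$, which is non-degenerate and Galois-equivariant; this holds for any elliptic curve over a field of characteristic zero and forces $\det\rho_{E,N}$ to coincide with the mod-$N$ cyclotomic character $\chi_N\colon\operatorname{Gal}(\overline{\mathbb{Q}(j_{K,f})}/\mathbb{Q}(j_{K,f}))\to(\mathbb{Z}/N\mathbb{Z})^{\times}$, determined by $\sigma(\zeta)=\zeta^{\chi_N(\sigma)}$ for $\zeta\in\mu_N$. Since $c$ is induced, via a fixed real embedding of $\mathbb{Q}(j_{K,f})$, by complex conjugation on $\mathbb{C}$, it sends each $N$-th root of unity $\zeta$ to $\overline{\zeta}=\zeta^{-1}$, so $\chi_N(c)\equiv-1\pmod N$, whence $\det M\equiv-1\pmod N$.

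For the trace, I would use that $c$ arises from complex conjugation and therefore satisfies $c^2=\mathrm{id}$, so $M^2=I$. Combining this with the Cayley--Hamilton identity for $2\times2$ matrices over the commutative ring $\mathbb{Z}/N\mathbb{Z}$,
\[
M^2-(\operatorname{tr}M)\,M+(\det M)\,I=0,
\]
and substituting $M^2=I$ and $\det M=-1$, one gets $(\operatorname{tr}M)\,M=0$; multiplying by $M^{-1}$ and using that scalars commute with $M$ yields $(\operatorname{tr}M)\,I=0$, i.e. $\operatorname{tr}M\equiv0\pmod N$.

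I do not expect any serious obstacle here: the argument rests only on the identification $\det\rho_{E,N}=\chi_N$ through the Weil pairing, the elementary action of complex conjugation on roots of unity, and the involutivity of $c$ (which is built into the hypothesis that $c$ is a complex conjugation, i.e. comes from a real embedding). The one point requiring attention is the order of the two steps: the Cayley--Hamilton computation genuinely needs $\det M=-1$, since from $M^2=I$ alone one only obtains $(\operatorname{tr}M)M=(1+\det M)I$; hence the determinant must be pinned down before the trace can be read off.
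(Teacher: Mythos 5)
Your proposal is correct: the determinant claim follows exactly as you say from the Weil pairing identifying $\det\rho_{E,N}$ with the mod-$N$ cyclotomic character, which takes the value $-1$ on a complex conjugation, and the trace claim follows from $\rho_{E,N}(c)^2=I$ together with Cayley--Hamilton and the invertibility of $\rho_{E,N}(c)$, with the determinant computed first as you note. The paper itself gives no proof of this statement (it is quoted verbatim from Lozano-Robledo, Lemma $6.5$), and your argument is essentially the standard proof of that cited result, so there is nothing further to reconcile.
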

Next, one can use [\cite{zarhin1987endomorphisms}, Theorem $1$] to show that the image of a complex conjugation via $\rho_{E,N}$ cannot be contained in a Cartan subgroup, for $N\ge3$. This leads to the following theorem:
\begin{theorem}[Lozano-Robledo, \cite{lozano2022galois}, Theorem $6.7$]
	Let $E/\mathbb{Q}(j_{K,f})$ be an elliptic curve with CM by an order $\mathcal{O}$ with conductor $f$ in an imaginary quadratic field $K$, let $p$ be a prime, let $n=2$ if $p=2$ and $n=1$ if
	$p>2$, and choose a basis of the $p$-adic Tate module $T_p(E)$ as in Theorem \ref{thm:3.2}. Then, the image of $\rho_{E,p^n}:\operatorname{Gal}(\overline{\mathbb{Q}(j_{K,f})}/\mathbb{Q}(j_{K,f}))\to\operatorname{GL}_2(\mathbb{Z}/p^n\mathbb{Z})$ cannot be contained in $\mathscr{C}_{\delta,\phi}(p^n)$.
\end{theorem}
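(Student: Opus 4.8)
The plan is to isolate a single Galois element and reduce the statement to showing that it avoids $\mathscr{C}_{\delta,\phi}(p^{n})$. Fix an embedding $\overline{\mathbb{Q}(j_{K,f})}\hookrightarrow\mathbb{C}$ and let $c$ be the corresponding complex conjugation. Since $K(j_{K,f})$ is the ring class field of $\mathcal{O}$, one has $[K(j_{K,f}):\mathbb{Q}(j_{K,f})]=2$, and as $K\subseteq K(j_{K,f})$ is imaginary, $c$ does not fix $K$; hence $c$ lies in $\operatorname{Gal}(\overline{\mathbb{Q}(j_{K,f})}/\mathbb{Q}(j_{K,f}))$ but not in $\operatorname{Gal}(\overline{\mathbb{Q}(j_{K,f})}/K(j_{K,f}))$. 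Because $\rho_{E,p^{n}}(c)$ lies in the image of $\rho_{E,p^{n}}$, it suffices to prove that $M:=\rho_{E,p^{n}}(c)\notin\mathscr{C}_{\delta,\phi}(p^{n})$.

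First I would record the shape of $M$ using the lemma just stated ([\cite{lozano2022galois}, Lemma $6.5$]): $\det M\equiv-1$ and $\operatorname{tr}M\equiv0\pmod{p^{n}}$. Since $p^{n}\ge3$ we have $-1\not\equiv1$, so $M\ne\pm I$, and Cayley--Hamilton gives $M^{2}\equiv I$; thus $M$ is a non-central involution. This alone does not eject $M$ from $\mathscr{C}_{\delta,\phi}(p^{n})$ --- small Cartan subgroups do contain trace-zero involutions of determinant $-1$ --- so the next step uses how $c$ interacts with the complex multiplication.

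Over $H_{f}:=K(j_{K,f})$ the action on $E[p^{n}]$ is $\mathcal{O}$-linear, so $G:=\rho_{E,p^{n}}\bigl(\operatorname{Gal}(\overline{\mathbb{Q}(j_{K,f})}/H_{f})\bigr)$ is a subgroup of the abelian group $\mathscr{C}_{\delta,\phi}(p^{n})\cong(\mathcal{O}/p^{n}\mathcal{O})^{\times}$ whose index divides $|\mathcal{O}^{\times}/\mathcal{O}_{p^{n}}^{\times}|$, hence divides $4$ or $6$. The Galois action on $\operatorname{End}(E)=\mathcal{O}$ factors through $\operatorname{Gal}(K(j_{K,f})/\mathbb{Q}(j_{K,f}))$, with $c$ acting as the nontrivial automorphism of $\mathcal{O}\subset K$; consequently conjugation by $M$ induces on $(\mathcal{O}/p^{n}\mathcal{O})^{\times}$ the map $u\mapsto\bar u$ (the choice $n=2$ when $p=2$ is exactly what makes this map nontrivial, since it is trivial modulo $2$). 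If $M$ lay in the abelian group $\mathscr{C}_{\delta,\phi}(p^{n})$, then conjugation by $M$ would be the identity, forcing $G$ into the proper subgroup of $(\mathcal{O}/p^{n}\mathcal{O})^{\times}$ fixed by complex conjugation. This is absurd: quantitatively, one first notes that the Weil pairing identifies $\det\rho_{E,p^{n}}$ with the mod-$p^{n}$ cyclotomic character, so $\det(G)$ is too large to sit inside the $\det$-image of the conjugation-fixed subgroup in all but a few small or ramified primes, and for those residual primes --- chiefly the $p$ with $p\mid\triangle_{K}f^{2}$ --- one invokes [\cite{zarhin1987endomorphisms}, Theorem $1$] applied to $T_{p}(E)$: an image this small would force $\operatorname{End}_{\mathbb{Q}(j_{K,f})}(E)\supsetneq\mathbb{Z}$, contradicting $\operatorname{End}_{\mathbb{Q}(j_{K,f})}(E)=\mathbb{Z}$ (every endomorphism of $E$ rational over $\mathbb{Q}(j_{K,f})$ is $c$-fixed, hence equal to its complex conjugate, hence rational). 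Therefore $M\notin\mathscr{C}_{\delta,\phi}(p^{n})$, and the theorem follows.

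The main obstacle is precisely the implication ``$M$ a non-central involution with $\det M=-1$ $\Longrightarrow$ $M\notin\mathscr{C}_{\delta,\phi}(p^{n})$'': this is false purely as linear algebra --- for example when $K=\mathbb{Q}(\sqrt{-1})$ and $p^{n}=5$ the relevant Cartan really does contain such an $M$ --- so the argument has to use arithmetic input (the Weil pairing, or Zarhin's endomorphism theorem for a uniform treatment), and the bookkeeping is most delicate at the ramified prime $p\mid\triangle_{K}f^{2}$ and at $p=2$ (which is why one passes from $E[2]$ to $E[4]$).
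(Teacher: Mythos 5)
Your overall skeleton is the same as the one the paper indicates for this cited result of Lozano-Robledo: reduce to the image $M=\rho_{E,p^n}(c)$ of a complex conjugation, observe (Lemma $6.5$) that trace and determinant alone cannot exclude $M$ from a Cartan, and bring in genuine arithmetic input, ultimately Zarhin's theorem together with $\operatorname{End}_{\mathbb{Q}(j_{K,f})}(E)=\mathbb{Z}$. Your identification of the key equivariance --- that conjugation by $M$ acts on the CM ring modulo $p^n$ as $\alpha\mapsto\bar{\alpha}$, because $c$ restricts to complex conjugation on $K$ --- is correct and is exactly the right lever.

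The gap is in how you close the argument. Having noted that conjugation by $M$ induces $u\mapsto\bar{u}$ on all of $\mathscr{C}_{\delta,\phi}(p^n)\cong(\mathcal{O}/p^n\mathcal{O})^{\times}$, you only retain the weaker consequence that $G$ (the image over $H_f$) would be pointwise conjugation-fixed, and then try to refute that by a Weil-pairing determinant count, which you concede fails precisely at the primes $p\mid \triangle_Kf$ and at $p=2$; for those residual cases your appeal to Zarhin is a non sequitur as stated: Zarhin's theorem does not say that a \emph{small} image (or an image inside the conjugation-fixed subgroup) forces extra endomorphisms. What it says is that every $\operatorname{Gal}(\overline{\mathbb{Q}(j_{K,f})}/\mathbb{Q}(j_{K,f}))$-equivariant endomorphism of $E[p^n]$ (for $p^n\ge 3$) comes from $\operatorname{End}_{\mathbb{Q}(j_{K,f})}(E)$; to use it you must exhibit a large centralizer of the image of the \emph{full} group, not a small image of the subgroup over $H_f$. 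So the proof, as written, is incomplete exactly in the hard cases. The repair is already contained in your own setup, in two equivalent ways. First, if $M\in\mathscr{C}_{\delta,\phi}(p^n)$ then $M$ centralizes the whole commutative ring $\mathcal{O}/p^n\mathcal{O}$ embedded in $M_2(\mathbb{Z}/p^n\mathbb{Z})$, while equivariance gives $M\,\iota(\alpha)\,M^{-1}=\iota(\bar{\alpha})$ for every $\alpha$; hence $\alpha=\bar{\alpha}$ in $\mathcal{O}/p^n\mathcal{O}$ for all $\alpha$, which is false as soon as $p^n\ge 3$ (this is the role of $n=2$ at $p=2$), and no determinant count or Zarhin is needed. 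Second, and this is the route the cited source takes: since $G_{E,K,p^n}\subset\mathscr{C}_{\delta,\phi}(p^n)$ always and the full image is generated by $G_{E,K,p^n}$ and $M$, the hypothesis $M\in\mathscr{C}_{\delta,\phi}(p^n)$ puts the entire image of $\rho_{E,p^n}$ inside the abelian Cartan; then all of $\mathcal{O}/p^n\mathcal{O}$ acts Galois-equivariantly on $E[p^n]$, and Zarhin (valid uniformly for $p^n\ge3$) would give $\operatorname{End}_{\mathbb{Q}(j_{K,f})}(E)\otimes\mathbb{Z}/p^n\mathbb{Z}\twoheadrightarrow\mathcal{O}/p^n\mathcal{O}$, contradicting $\operatorname{End}_{\mathbb{Q}(j_{K,f})}(E)=\mathbb{Z}$ (your closing argument for this last equality is fine). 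Either repair makes your proof work for all primes at once and renders the Weil-pairing detour unnecessary.
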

The above theorem shows that the image of a complex conjugation via $\rho_{E,N}$ lands in $\mathscr{N}_{\delta,\phi}(N)/\mathscr{C}_{\delta,\phi}(N)$ for any $N\ge3$.
\begin{theorem}[Lozano-Robledo, \cite{lozano2022galois}, Theorem $6.10$]\label{thm:3.6}
	Let $E/\mathbb{Q}(j_{K,f})$ be an elliptic curve with CM by an order $\mathcal{O}$ with conductor $f$ in an imaginary quadratic field $K$, let $p$ be an odd prime, let $G_{E,p^{\infty}}$ be the image of $\rho_{E,p^{\infty}}$, and let $G_{E,K,p^{\infty}}=\rho_{E,p^{\infty}}(\operatorname{Gal}(\overline{H_f}/H_f))$. Let $c\in\operatorname{Gal}(\overline{\mathbb{Q}(j_{K,f})}/\mathbb{Q}(j_{K,f}))$ be a complex conjugation, and let $\gamma=\rho_{E,p^{\infty}}(c)$. Then:\\
	\\(a) There is a $\mathbb{Z}_p$-basis of $T_p(E)$ such that $G_{E,K,p^{\infty}}\subset\mathscr{C}_{\delta,\phi}(p^{\infty})$ and $\gamma=c_{\varepsilon}=\begin{pmatrix}\varepsilon & 0\\0 & -\varepsilon
	\end{pmatrix}$ for some $\varepsilon\in\{\pm1\}$.\\
	(b) If $\delta=\triangle_{K}f^2/4\not\equiv0\pmod p$, then for each $\varepsilon\in\{\pm1\}$ there is a basis such that $\rho_{E,p^{\infty}}(c)=c_{\varepsilon}$.\\
	(c) If $\delta\equiv0\pmod p$, then the group $\big<\mathscr{C}_{\delta,\phi}(p^{\infty}),c_1\big>$ cannot be conjugated to $\big<\mathscr{C}_{\delta,\phi}(p^{\infty}),c_1^{-1}\big>$ in such a way that sends $c_1$ to $c_1^{-1}$.
\end{theorem}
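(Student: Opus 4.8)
\textbf{Reduction to a computation inside $\operatorname{GL}_2(\mathbb{Z}_p)$.}
Since $p$ is odd we are in the case $\phi=0$, $\delta=\triangle_Kf^2/4\in\mathbb{Z}_p$, and $c_\phi=c_{-1}$. The plan is first to exploit that $T_p(E)$ is free of rank one over $\mathcal{O}_p:=\mathcal{O}\otimes_{\mathbb{Z}}\mathbb{Z}_p$: choosing such a generator (as part of a $\mathbb{Z}_p$-basis) identifies $\mathscr{C}_{\delta,\phi}(p^{\infty})$ with $(\mathcal{O}_p)^{\times}$, and by the Lozano-Robledo results quoted above it places $G_{E,K,p^{\infty}}$ inside $\mathscr{C}_{\delta,\phi}(p^{\infty})$ and $G_{E,p^{\infty}}$, hence $\gamma=\rho_{E,p^{\infty}}(c)$, inside $\mathscr{N}_{\delta,\phi}(p^{\infty})=\langle\mathscr{C}_{\delta,\phi}(p^{\infty}),c_\phi\rangle$. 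Then I would invoke the two facts already recorded for $\gamma$: it has trace $0$ and determinant $-1$ (so $\gamma^2=I$, consistent with $c^2=1$), and by the theorem preceding this one it is not contained in $\mathscr{C}_{\delta,\phi}(p^{\infty})$. Since $\mathscr{C}_{\delta,\phi}(p^{\infty})$ has index $2$ in $\mathscr{N}_{\delta,\phi}(p^{\infty})$, this forces $\gamma=c_\phi M_0$ for some $M_0\in(\mathcal{O}_p)^{\times}$, and the theorem becomes the problem of classifying the involutions in the coset $c_\phi\mathscr{C}_{\delta,\phi}(p^{\infty})$ up to conjugation by the normalizer of $\mathscr{C}_{\delta,\phi}(p^{\infty})$ in $\operatorname{GL}_2(\mathbb{Z}_p)$ --- a normalizer which equals $\mathscr{N}_{\delta,\phi}(p^{\infty})$, since $\mathscr{C}_{\delta,\phi}(p^{\infty})$ is self-centralizing in $\operatorname{GL}_2(\mathbb{Z}_p)$ and $\operatorname{Aut}_{\mathbb{Z}_p}(\mathcal{O}_p)$ has order $2$.

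\textbf{The matrix identities.}
Conjugation by $c_\phi$ induces on $\mathscr{C}_{\delta,\phi}(p^{\infty})\cong(\mathcal{O}_p)^{\times}$ the nontrivial automorphism $z\mapsto\overline{z}$ of $\mathcal{O}_p/\mathbb{Z}_p$ --- this is automatic once $\gamma\in c_\phi\mathscr{C}_{\delta,\phi}(p^{\infty})$, and it matches the fact that $c$ restricts to the nontrivial element of $\operatorname{Gal}(K/\mathbb{Q})$. Using $c_\phi^2=I$ one gets $(c_\phi M)^2=\overline{M}M=\mathrm{N}_{\mathcal{O}_p/\mathbb{Z}_p}(M)\cdot I$, so $c_\phi M$ is an involution precisely when $M$ lies in the norm-one subgroup $U^1=\ker\!\bigl(\mathrm{N}_{\mathcal{O}_p/\mathbb{Z}_p}\colon(\mathcal{O}_p)^{\times}\to\mathbb{Z}_p^{\times}\bigr)$. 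A further short computation shows that conjugating such a $c_\phi M$ by an element of $\mathscr{N}_{\delta,\phi}(p^{\infty})$ replaces the coordinate $M$ by some element of $\{\,u_1M^{\pm1}:u_1\in B\,\}$, where $B=\{\,\overline{u}u^{-1}:u\in(\mathcal{O}_p)^{\times}\,\}\subseteq U^1$; equivalently, the conjugacy class of $c_\phi M$ is recorded by the unordered pair $\{[M],[M]^{-1}\}$ in the group $U^1/B$. Because $c_\phi\cdot I=c_{-1}$ and $c_\phi\cdot(-I)=c_1$, the model matrices $c_{-1}$ and $c_1$ correspond to the classes $[I]$ and $[-I]$, each of which is its own inverse and hence a full conjugacy class on its own.

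\textbf{Splitting into the two cases.}
The finite $2$-torsion group $U^1/B$ measures the failure of Hilbert 90 for $\mathcal{O}_p/\mathbb{Z}_p$; because $p$ is odd its pro-$p$ part is cohomologically trivial, so one may reduce to the prime-to-$p$ roots of unity of $\mathcal{O}_p$, on which $\operatorname{Gal}(K/\mathbb{Q})$ acts through an automorphism of order at most $2$, giving $|U^1/B|\le 2$ with equality exactly when $\delta\equiv0\pmod p$ (i.e. $p$ ramifies in $K$ or $p\mid f$). If $\delta\not\equiv0\pmod p$ then $B=U^1$, so $M_0\in B$ and $\gamma$ is conjugate to $c_{-1}$; moreover $-I=\overline{u}u^{-1}$ for $u=\tau$, which is a unit since $\mathrm{N}_{\mathcal{O}_p/\mathbb{Z}_p}(\tau)=-\delta\in\mathbb{Z}_p^{\times}$, so $-I\in B$ and $\gamma$ is conjugate to $c_1$ as well --- this is part (b), and hence part (a) in this case. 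If $\delta\equiv0\pmod p$ I would check $-I\notin B$: from $\overline{u}=-u$ one gets $\operatorname{tr}_{\mathcal{O}_p/\mathbb{Z}_p}(u)=0$, hence $u\in\mathbb{Z}_p\tau$ and $\mathrm{N}_{\mathcal{O}_p/\mathbb{Z}_p}(u)\in p\mathbb{Z}_p$, so $u$ is not a unit. Thus $-I$ represents the unique nontrivial class, $U^1=B\sqcup(-I)B$, and $M_0$ lies in exactly one coset; so $\gamma$ is conjugate to exactly one of $c_1,c_{-1}$ --- part (a), with $\varepsilon$ a genuine invariant of $(E,p)$ --- and since $[I]\neq[-I]$ in $U^1/B$ while conjugation within $\mathscr{N}_{\delta,\phi}(p^{\infty})$ fixes the classes of $c_1$ and $c_{-1}$, no element of $\operatorname{GL}_2(\mathbb{Z}_p)$ normalizing $\mathscr{C}_{\delta,\phi}(p^{\infty})$ can send $c_1$ to $c_{-1}$ --- part (c).

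\textbf{Main obstacle.}
Everything past the dictionary is routine $2\times 2$ matrix algebra over $\mathbb{Z}_p$ together with the already-quoted facts about $\rho_{E,p^{\infty}}$ and about complex conjugation. The one ingredient needing genuine care is the cohomological estimate $|U^1/B|\le 2$, to be established uniformly across the split, inert, ramified, and (for $p\mid f$) non-maximal cases; this bound is exactly what lets part (a) conclude that every involution in the coset $c_\phi\mathscr{C}_{\delta,\phi}(p^{\infty})$ reduces to one of $c_{\pm1}$, and, together with the elementary observation that $-I\notin B$ when $p\mid\delta$, it is also the crux of the non-conjugacy assertion in part (c).
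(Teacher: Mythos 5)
Note first that the paper itself contains no proof of this statement: it is quoted verbatim (as Theorem $6.10$) from Lozano-Robledo's article, with $c_1^{-1}$ evidently a typo for $c_{-1}$, so there is no internal argument to compare yours against; your sketch is, in spirit, a reconstruction of the source's own line of reasoning. On its merits, the plan is sound and the key computations check out: since $p$ is odd one has $\phi=0$; the quoted results place $G_{E,K,p^{\infty}}$ in $\mathscr{C}_{\delta,0}(p^{\infty})\cong(\mathcal{O}\otimes\mathbb{Z}_p)^{\times}$ and $\gamma$ in the nontrivial coset of $\mathscr{N}_{\delta,0}(p^{\infty})$; involutions $c_\phi M$ in that coset correspond to norm-one units $M$; conjugation inside $\mathscr{N}_{\delta,0}(p^{\infty})$ changes $M$ by an element of $B=\{\overline{u}u^{-1}\}$ or inverts it, so the invariant is the pair $\{[M],[M]^{-1}\}$ in $U^1/B=H^1$ of the order-two group acting on the unit group; the pro-$p$ principal units are cohomologically trivial for odd $p$, which gives $|U^1/B|\le 2$ uniformly in the split, inert, ramified and $p\mid f$ cases; and $-I\in B$ exactly when $p\nmid\delta$ (your witness $u=\tau$, and the trace-zero obstruction when $p\mid\delta$) yields (a), (b), (c). Two points must be written out to make this complete. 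First, the input $\gamma\notin\mathscr{C}_{\delta,0}(p^{\infty})$ is not pure matrix algebra --- in the split case the Cartan does contain trace-zero, determinant $-1$ elements --- so it genuinely rests on the quoted Zarhin-based theorem together with the observation that $G_{E,p^{\infty}}=\bigl<\gamma, G_{E,K,p^{\infty}}\bigr>$ since $c$ acts nontrivially on $K$; you invoke the theorem, but should make this inference explicit. Second, in part (c) the hypothesis only says the conjugation carries $\bigl<\mathscr{C}_{\delta,0}(p^{\infty}),c_1\bigr>$ to $\bigl<\mathscr{C}_{\delta,0}(p^{\infty}),c_{-1}\bigr>$; to apply your invariant you must first argue that such a conjugator necessarily normalizes $\mathscr{C}_{\delta,0}(p^{\infty})$, e.g.\ because the Cartan is the unique abelian index-two subgroup of either group (the centralizer in the Cartan of any element of the nontrivial coset is only $\mathbb{Z}_p^{\times}$, of infinite index), after which your self-centralizing computation identifies the relevant normalizer with $\mathscr{N}_{\delta,0}(p^{\infty})$ and the classes $[I]\neq[-I]$ separate $c_{-1}$ from $c_1$.
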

From [\cite{lozano2022galois}, Lemma $6.11$], one obtains the image of $\rho_{E,p^{\infty}}$, $G_{E,p^{\infty}}=\big<\zeta c_\phi,G_{E,K,p^{\infty}}\big>$ for all primes $p\ge2$, where $\zeta$ is a root of unity in $\mathscr{C}_{\delta,\phi}(p^{\infty})$ of order $d$ (index of $G_{E,K,p^{\infty}}$ in $\mathscr{C}_{\delta,\phi}(p^{\infty})$) and $c_{\phi}=\begin{pmatrix}
	-1 & 0\\
	\phi & 1
\end{pmatrix}$. Also, $d\mid\#\mathcal{O}$. This result forms the basis of describing the image of $\rho_{E,p}$ as obtained by [\cite{lozano2022galois}] for odd primes dividing the conductor of the discriminant. The last theorem in this section describes the image of $\rho_{E,N}$ for $N=2^n$, $n\ge1$, for $j\neq0,1728$.
\begin{theorem}[Lozano-Robledo, \cite{lozano2022galois}, Theorem $8.1(a)$]\label{thm:3.7}
Let $E/\mathbb{Q}(j_{K,f})$ be an elliptic curve with CM by an order $\mathcal{O}$ with conductor $f$ in an imaginary quadratic field $K$, let $p$ be an odd prime dividing $\triangle_{K}f$ (thus, $j\neq1728$ where
$f\triangle_K = -4$), and let $G_{E,p}$ be the image of $\rho_{E,p}$. Then using Theorem \ref{thm:3.6} it follows that, if $j\neq0,1728$ then either $G_{E,p}=\mathscr{N}_{\delta,\phi}(p)$, or $G_{E,p}$ is generated by $c_{\varepsilon}$ and the group  $G_{E,K,p}=\biggl\{\begin{pmatrix}a & b\\0 & a\end{pmatrix}:a\in((\mathbb{Z}/p\mathbb{Z})^{\times})^2,b\in\mathbb{Z}/p\mathbb{Z}\}\leq\operatorname{GL}_2(\mathbb{Z}/p\mathbb{Z})$ where $\delta\equiv 0\pmod p$.
\end{theorem}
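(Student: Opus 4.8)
The plan is to read the statement off from Theorem~\ref{thm:3.6} together with the decomposition $G_{E,p^{\infty}}=\langle\zeta c_\phi,\,G_{E,K,p^{\infty}}\rangle$ of \cite{lozano2022galois} quoted above, the decisive input being that $j\neq0,1728$ forces $\mathcal{O}^{\times}=\{\pm1\}$.

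First I would fix the parameters $(\delta,\phi)$. Since $N=p$ is odd, Theorem~\ref{thm:3.2} puts us in the case $\phi=0$ and $\delta=\triangle_Kf^{2}/4$, which is a well-defined element of $\mathbb{Z}/p\mathbb{Z}$ because $4\in(\mathbb{Z}/p\mathbb{Z})^{\times}$; the hypothesis $p\mid\triangle_Kf$ then forces $\delta\equiv0\pmod p$. Thus $c_\phi=\operatorname{diag}(-1,1)$ and the Cartan subgroup degenerates to the ramified shape
\begin{equation*}
	\mathscr{C}_{\delta,\phi}(p)=\left\{\begin{pmatrix}a & b\\ 0 & a\end{pmatrix}:a\in(\mathbb{Z}/p\mathbb{Z})^{\times},\ b\in\mathbb{Z}/p\mathbb{Z}\right\},
\end{equation*}
an abelian group fitting in a split extension $1\to\mathbb{Z}/p\mathbb{Z}\to\mathscr{C}_{\delta,\phi}(p)\to(\mathbb{Z}/p\mathbb{Z})^{\times}\to1$, the subgroup being the upper unipotents and the quotient recording the diagonal entry $a$.

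Next I would produce generators for $G_{E,p}$ and locate $G_{E,K,p}$ inside $\mathscr{C}_{\delta,\phi}(p)$. By Theorem~\ref{thm:3.6}(a) one may choose the $\mathbb{Z}_p$-basis of $T_p(E)$ so that $G_{E,K,p^{\infty}}\subseteq\mathscr{C}_{\delta,\phi}(p^{\infty})$ and $\rho_{E,p^{\infty}}(c)=c_{\varepsilon}=\operatorname{diag}(\varepsilon,-\varepsilon)$ for some $\varepsilon\in\{\pm1\}$; reduction modulo $p$ gives $G_{E,K,p}\subseteq\mathscr{C}_{\delta,\phi}(p)$ and $\rho_{E,p}(c)=c_{\varepsilon}$. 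Since $\operatorname{Gal}(\overline{\mathbb{Q}(j_{K,f})}/\mathbb{Q}(j_{K,f}))$ is the union of $\operatorname{Gal}(\overline{H_f}/H_f)$ and the coset $c\cdot\operatorname{Gal}(\overline{H_f}/H_f)$ (standard CM theory: $[H_f:\mathbb{Q}(j_{K,f})]=2$ with $c$ acting nontrivially on $K\subseteq H_f$), applying $\rho_{E,p}$ yields $G_{E,p}=\langle G_{E,K,p},c_{\varepsilon}\rangle$, which is exactly the reduction mod $p$ of $G_{E,p^{\infty}}=\langle\zeta c_\phi,G_{E,K,p^{\infty}}\rangle$. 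By [\cite{lozano2022galois}, Lemma $6.11$] the index of $G_{E,K,p^{\infty}}$ in $\mathscr{C}_{\delta,\phi}(p^{\infty})$ divides $\#\mathcal{O}^{\times}=2$, so (as $\mathscr{C}_{\delta,\phi}$ is abelian and reduction mod $p$ is surjective) $G_{E,K,p}$ has index $1$ or $2$ in $\mathscr{C}_{\delta,\phi}(p)$. If the index is $1$, then, writing $c_{\varepsilon}=(-\varepsilon I)c_\phi$ with $-\varepsilon I\in\mathscr{C}_{\delta,\phi}(p)$, we get $G_{E,p}=\langle\mathscr{C}_{\delta,\phi}(p),c_{\varepsilon}\rangle=\langle\mathscr{C}_{\delta,\phi}(p),c_\phi\rangle=\mathscr{N}_{\delta,\phi}(p)$. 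If the index is $2$, then since $p$ is odd there is no nontrivial homomorphism $\mathbb{Z}/p\mathbb{Z}\to\mathbb{Z}/2\mathbb{Z}$, so every surjection $\mathscr{C}_{\delta,\phi}(p)\to\mathbb{Z}/2\mathbb{Z}$ factors through the quadratic residue character $a\mapsto\left(\frac{a}{p}\right)$ on the diagonal quotient; hence $\mathscr{C}_{\delta,\phi}(p)$ has a unique index-$2$ subgroup, and we are forced to
\begin{equation*}
	G_{E,K,p}=\left\{\begin{pmatrix}a & b\\ 0 & a\end{pmatrix}:a\in\left((\mathbb{Z}/p\mathbb{Z})^{\times}\right)^{2},\ b\in\mathbb{Z}/p\mathbb{Z}\right\},\qquad G_{E,p}=\langle G_{E,K,p},c_{\varepsilon}\rangle,
\end{equation*}
which is the second alternative. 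The two are genuinely distinct: $p$ odd makes $c_{\varepsilon}=\operatorname{diag}(\varepsilon,-\varepsilon)\notin\mathscr{C}_{\delta,\phi}(p)\supseteq G_{E,K,p}$ (in accordance with [\cite{lozano2022galois}, Theorem $6.7$]), so in the second case $[G_{E,p}:G_{E,K,p}]=2$ and $|G_{E,p}|=|\mathscr{C}_{\delta,\phi}(p)|<|\mathscr{N}_{\delta,\phi}(p)|$.

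Given Theorem~\ref{thm:3.6} and [\cite{lozano2022galois}, Lemma $6.11$] the argument is essentially group-theoretic bookkeeping, and the one place demanding care is the descent of the index bound $d\mid\#\mathcal{O}^{\times}$ from the $p^{\infty}$-level down to level $p$, together with the check that $c_{\varepsilon}$ represents the nontrivial coset of $\mathscr{N}_{\delta,\phi}(p)/\mathscr{C}_{\delta,\phi}(p)$ rather than sliding into $\mathscr{C}_{\delta,\phi}(p)$; both points, as indicated, reduce to the abelianness of $\mathscr{C}_{\delta,\phi}$, the surjectivity of reduction modulo $p$, and the oddness of $p$.
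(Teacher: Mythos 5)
Your argument is correct and follows essentially the derivation the paper indicates: this statement is quoted from Lozano-Robledo (Theorem $8.1(a)$) and is presented as a consequence of Theorem \ref{thm:3.6} together with the decomposition $G_{E,p^{\infty}}=\big<\zeta c_{\phi},G_{E,K,p^{\infty}}\big>$ and the bound $d\mid\#\mathcal{O}^{\times}=2$ for $j\neq0,1728$, which is exactly the route you take. Your reduction to the degenerate Cartan $\mathscr{C}_{\delta,0}(p)\cong(\mathbb{Z}/p\mathbb{Z})^{\times}\times\mathbb{Z}/p\mathbb{Z}$ and its unique index-$2$ subgroup matches the source's argument, so there is nothing substantive to add.
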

\begin{theorem}[Lozano-Robledo, \cite{lozano2022galois}, Theorem $9.3$]\label{thm:3.8}
Let $E/\mathbb{Q}(j_{K,f})$ be an elliptic curve with CM by an order $\mathcal{O}$ with conductor $f$ in an imaginary quadratic field $K$,  and  $j\neq1728$. Suppose that $\operatorname{Gal}(H_f(E[2^n])/H_f)\subsetneq(\mathcal{O}/2^n\mathcal{O})^{\times}$ for some $n\ge1$. Then, for all $n\ge3$, we have $\operatorname{Gal}(H_f(E[2^n])/H_f)\cong(\mathcal{O}/2^n\mathcal{O})^{\times}/\{\pm1\}$, and there are two possibilities:\\
\\
(1) If $\operatorname{Gal}(H_f(E[4])/H_f)\cong(\mathcal{O}/4\mathcal{O})^{\times}/\{\pm1\}$, then:
\begin{itemize}
	\item[(a)] $\triangle_{K}f^2\equiv0\pmod{16}$. In particular, one has either:
	\begin{itemize}
		\item $\triangle_K\equiv1\pmod4$ and $f\equiv0\pmod4$, or
		\item $\triangle_K\equiv0\pmod4$ and $f\equiv0\pmod2$.
	\end{itemize}
	\item[(b)] The $4$-th roots of unity $\mu_4$ are contained in $H_f$, i.e., $\mathbb{Q}(i)\subset H_f$.
	\item[(c)] For each $n\ge2$, there is a $\mathbb{Z}/2^n\mathbb{Z}$-basis of $E[2^n]$ such that the image of the Galois representation $\rho_{E,2^n}:\operatorname{Gal}(\overline{H_f}/H_f)\to\operatorname{GL}_2(\mathbb{Z}/2^n\mathbb{Z})$ is one of the groups
	$$J_1=\biggl<\begin{pmatrix}5 & 0\\0 & 5
	\end{pmatrix},\begin{pmatrix}
		1 & 1\\ \delta & 1
	\end{pmatrix}\biggr>\quad\text{or}\quad J_2=\biggl<\begin{pmatrix}5 & 0\\0 & 5
	\end{pmatrix},\begin{pmatrix}
		-1 & -1\\ -\delta & -1
	\end{pmatrix}\biggr>\leq\mathscr{C}_{\delta,0}(2^n).$$
\end{itemize}	
(2) If $\operatorname{Gal}(H_f(E[4])/H_f)\cong(\mathcal{O}/4\mathcal{O})^{\times}$, then:
\begin{itemize}
	\item[(a)] $\triangle_{K}\equiv0\pmod8$.
	\item[(b)] For each $n\ge3$, there is a $\mathbb{Z}/2^n\mathbb{Z}$-basis of $E[2^n]$ such that the image of the Galois representation $\rho_{E,2^n}:\operatorname{Gal}(\overline{H_f}/H_f)\to\operatorname{GL}_2(\mathbb{Z}/2^n\mathbb{Z})$ is one of the groups
	$$J_1^{\prime}=\biggl<\begin{pmatrix}3 & 0\\0 & 3
	\end{pmatrix},\begin{pmatrix}
		1 & 1\\ \delta & 1
	\end{pmatrix}\biggr>\quad\text{or}\quad J_2^{\prime}=\biggl<\begin{pmatrix}3 & 0\\0 & 3
	\end{pmatrix},\begin{pmatrix}
		-1 & -1\\ -\delta & -1
	\end{pmatrix}\biggr>\leq\mathscr{C}_{\delta,0}(2^n).$$
\end{itemize}
\end{theorem}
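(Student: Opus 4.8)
The statement is Theorem $9.3$ of \cite{lozano2022galois}; I indicate the strategy one would follow, combining the structural results recalled above with the classical theory of complex multiplication. The plan is to first bound the size of the image. Since $j\neq0,1728$, the order $\mathcal{O}$ has unit group $\mathcal{O}^{\times}=\{\pm1\}$, so $\mathcal{O}_{2^n}^{\times}=\{u\in\mathcal{O}^{\times}:u\equiv1\bmod2^n\mathcal{O}\}$ equals $\{\pm1\}$ when $n=1$ and $\{1\}$ when $n\ge2$, because $-1\equiv1\bmod2^n\mathcal{O}$ only for $n=1$. By Theorem \ref{thm:3.2}(2),(3), the index of $\operatorname{Gal}(H_f(E[2^n])/H_f)$ in $\mathscr{C}_{\delta,\phi}(2^n)\cong(\mathcal{O}/2^n\mathcal{O})^{\times}$ divides $|\mathcal{O}^{\times}/\mathcal{O}_{2^n}^{\times}|$; hence it is $1$ for $n\le1$ and divides $2$ for $n\ge2$. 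As the reduction maps $(\mathcal{O}/2^{n+1}\mathcal{O})^{\times}\twoheadrightarrow(\mathcal{O}/2^n\mathcal{O})^{\times}$ have $2$-power kernel, the index is non-decreasing along the tower, so once it equals $2$ it stays $2$; thus the hypothesis of the theorem forces it to be exactly $2$ for all sufficiently large $n$, which gives $|\operatorname{Gal}(H_f(E[2^n])/H_f)|=\tfrac12\,|(\mathcal{O}/2^n\mathcal{O})^{\times}|$.

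Next I would identify the resulting index-$2$ subgroup by passing to the $2$-adic limit $U:=(\mathcal{O}\otimes\mathbb{Z}_2)^{\times}$ and using the Weber function $h$. By CM theory, $H_f(h(E[2^n]))$ is a ray class field of $K$, $\operatorname{Gal}(H_f(h(E[2^n]))/H_f)$ is the corresponding ray class group, and $\operatorname{Gal}(H_f(E[2^n])/H_f(h(E[2^n])))$ embeds into $\mathcal{O}^{\times}/\mathcal{O}_{2^n}^{\times}$; a comparison of orders then shows that the closure of the image in $U$ is exactly the kernel of the surjection $U\to\mathcal{O}^{\times}\cong\{\pm1\}$ induced by the Weber map. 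For the orders under consideration $-1$ is a non-square in $(\mathcal{O}/2^n\mathcal{O})^{\times}$, so this kernel is abstractly isomorphic to $(\mathcal{O}/2^n\mathcal{O})^{\times}/\{\pm1\}$, giving the claimed isomorphism for all large $n$. Since every closed subgroup of index $2$ in $U$ is detected modulo a small power of $2$ (at most $8$), the defect, if present, already shows up at level $8$, which yields the statement for all $n\ge3$; the dichotomy between (1) and (2) is precisely whether it is visible at level $4$ as well. A direct computation of $(\mathcal{O}/4\mathcal{O})^{\times}$, organised by $\triangle_Kf^2\bmod16$, shows that $\operatorname{Gal}(H_f(E[4])/H_f)\cong(\mathcal{O}/4\mathcal{O})^{\times}/\{\pm1\}$ exactly when $16\mid\triangle_Kf^2$ --- which forces the two sub-cases in (1)(a) --- while $\operatorname{Gal}(H_f(E[4])/H_f)\cong(\mathcal{O}/4\mathcal{O})^{\times}$ exactly when $8\mid\triangle_K$, giving (2)(a); and $\mathbb{Q}(i)\subset H_f$ in (1)(b) follows by identifying, via the reciprocity law, the quadratic subextension of $H_f(E[4])/H_f$ that is killed with one forced to contain $\mu_4$. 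Finally, having located the image as a fixed index-$2$ subgroup of $\mathscr{C}_{\delta,0}(2^n)$, explicit generators are read off from the presentation $\mathscr{C}_{\delta,0}(2^n)\cong(\mathcal{O}/2^n\mathcal{O})^{\times}$ with $\{\pm I\}$ for $\{\pm1\}$ and $\begin{pmatrix}1&1\\ \delta&1\end{pmatrix}$ for a generator of the non-diagonal direction: after quotienting $\{\pm I\}$ the remaining cyclic part has two natural lifts, $5I$ and $3I$, producing $J_1,J_2$ (resp.\ $J_1',J_2'$), which is the content of (1)(c) and (2)(b).

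The step I expect to be the main obstacle is this last identification --- matching the arithmetically defined index-$2$ subgroup (the kernel of the Weber map) with the explicit matrix subgroups of $\mathscr{C}_{\delta,\phi}(2^n)$, while tracking both the splitting behaviour of $2$ in $\mathcal{O}$ and the entanglement between $H_f$ and the $2$-power torsion fields. The bookkeeping is delicate because the very shape of $\mathscr{C}_{\delta,\phi}(2^n)$ --- and even the choice $(\delta,\phi)=(\triangle_Kf^2/4,0)$ versus $((\triangle_K-1)f^2/4,f)$ --- depends on congruences of $\triangle_Kf^2$ modulo powers of $2$, so the case division must be arranged carefully to remain exhaustive.
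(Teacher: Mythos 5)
You should note at the outset that the paper never proves this statement: it is imported verbatim as Theorem 9.3 of Lozano-Robledo \cite{lozano2022galois} and used as a black box, so the only thing to measure your sketch against is the original argument it tries to reconstruct. Your opening reduction is essentially right: $\mathcal{O}^{\times}=\{\pm1\}$, the index of $\operatorname{Gal}(H_f(E[2^n])/H_f)$ in $\mathscr{C}_{\delta,\phi}(2^n)\cong(\mathcal{O}/2^n\mathcal{O})^{\times}$ divides $|\mathcal{O}^{\times}/\mathcal{O}_{2^n}^{\times}|\le 2$ by Theorem \ref{thm:3.2}(2),(3), the index is non-decreasing up the tower, and since squares in $(\mathcal{O}\otimes\mathbb{Z}_2)^{\times}$ contain $1+8(\mathcal{O}\otimes\mathbb{Z}_2)$, any index-$2$ closed subgroup is the full preimage of its image at level $8$, so a defect anywhere propagates to all $n\ge3$. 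That skeleton matches the source.

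The gaps are in everything after that, which is where the actual content of the theorem sits. (i) Your step ``$-1$ is a non-square in $(\mathcal{O}/2^n\mathcal{O})^{\times}$, so the kernel is abstractly isomorphic to $(\mathcal{O}/2^n\mathcal{O})^{\times}/\{\pm1\}$'' does not work as stated: what is needed is that the Galois image meets $\{\pm1\}$ trivially (an index-$2$ subgroup containing $-1$ need not be isomorphic to the quotient by $\{\pm1\}$), and establishing this is exactly the purpose of the tower $H_f\subset H_f(h(E[2^n]))\subset H_f(E[2^n])$ and the ray-class/Weber-function counting in the original proof, not a remark about squares. (ii) The congruence criteria in (1)(a) and (2)(a), and the containment $\mathbb{Q}(i)\subset H_f$ in (1)(b), are asserted (``a direct computation \dots shows'', ``via the reciprocity law'') rather than derived; these constraints on $\triangle_Kf^2$ and the cyclotomic input from the Weil pairing ($\mu_4\subset H_f(E[4])$) are the arithmetic heart of the result and nothing in your sketch produces them. (iii) Your account of the explicit generators is garbled: $J_1$ and $J_2$ share the generator $5I$ and differ in the sign of $\bigl(\begin{smallmatrix}1&1\\ \delta&1\end{smallmatrix}\bigr)$, while the $5$ versus $3$ on the diagonal distinguishes case (1) from case (2) and is dictated by the level-$4$ behaviour, not by ``two natural lifts'' within one case; relatedly, case (1) holds already for $n\ge2$ while case (2) only for $n\ge3$, an asymmetry your argument neither notices nor explains. (iv) Minor: the statement assumes only $j\neq1728$, but you work with $j\neq0,1728$ to get $\mathcal{O}^{\times}=\{\pm1\}$; the $j=0$ possibility (index a priori dividing $6$) must be ruled out or treated separately. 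As written, the proposal is a plausible outline of the strategy but not a proof.
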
 
	\section{\textbf{Possible Decompositions of the Galois Groups of the Division Fields into Direct Products of Galois Groups}}\label{sec:4}
	In the previous section, we studied possible representations of the Galois groups of the $p$-division fields for a prime number $p$ as a subgroup of $\operatorname{GL}_2(\mathbb{Z}/p\mathbb{Z})$. But for our computations in section \ref{sec:5} we need to know such representations for the Galois groups of $N$-division fields where $N$ is a composite number. To this end, we note the work of Campagna and Pengo\cite{campagna2022entanglement}.\\
	\\
	Let $E/F$ be an elliptic curve, and let $F\subset\overline{F}$ be a fixed algebraic closure. Let $E_{tors}=E(\overline{F})_{tors}$ be the group of all torsion points of $E$, and let $F(E)_{tors}$ denote the the compositum of the family of fields $\{F(E[p^{\infty}])\}$. Each extension $F\subset F(E[p^{\infty}])$ is defined as the compsitum of the family $\{F(E[p^n])\}$. The absolute Galois group $\operatorname{Gal}(\overline{F}/F)$ acts on the group $E_{tors}$ , which gives us the following Galois representation
	\begin{equation*}
		\rho_E:\operatorname{Gal}(F(E)_{tors}/F)\to\operatorname{Aut}_{\mathbb{Z}}(E_{tors})\cong\operatorname{GL}_2(\hat{\mathbb{Z}}).
	\end{equation*}
	There is a natural inclusion
	\begin{equation}\label{eq:(4.1)}
		\operatorname{Gal}(F(E)_{tors}/F)\hookrightarrow\prod_p\operatorname{Gal}(F(E[p^{\infty}])/F).
	\end{equation}
	In this section, we note the conditions under which this inclusion would be an isomorphism for CM elliptic curves as investigated by Campagna and Pengo\cite{campagna2022entanglement}. To this end, we note the following theorems:
	\begin{theorem}[Campagna-Pengo, \cite{campagna2022entanglement}, Theorem $1.1$]\label{thm:4.1}
		Let $F$ be a number field and $E/F$ an elliptic curve with complex
		multiplication by an order $\mathcal{O}$ with conductor $f$ in an imaginary quadratic field $K\subset F$. Denote by $B_E:=f\triangle_FN_{F/\mathbb{Q}}(f_E)\in\mathbb{Z}$, where $\triangle_{F}\in\mathbb{Z}$ is the absolute discriminant of the number field $F$ and the absolute norm $N_{F/\mathbb{Q}}(f_E)=|\mathcal{O}_F/f_E|\in\mathbb{N}$ of the conductor ideal $f_E\subset\mathcal{O}_F$ of $E$. Then the map \ref{eq:(4.1)} induces an isomorphism
		\begin{equation}\label{eq:(4.2)}
			\operatorname{Gal}(F(E)_{tors}/F)\xrightarrow{\cong}\operatorname{Gal}(F(E[S^{\infty}])/F)\prod_{p\notin S}\operatorname{Gal}(F(E[p^{\infty}])/F).
		\end{equation}
		for any finite set of primes $S\subset\mathbb{N}$ containing the primes dividing $B_E$ and $F(E[S^{\infty}])$ denotes the compositum of the family of fields $\{F(E[p^{\infty}])\}_{p\in S}$. In this case, one says that the family $\{F(E[S^{\infty}])\}_{p\in S}\cup\{F(E[p^{\infty}])\}_{p\notin S}$ is linearly disjoint over $F$. 
	\end{theorem}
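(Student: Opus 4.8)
The plan is to recast the isomorphism \eqref{eq:(4.2)} as a linear disjointness statement and to establish it by a ramification analysis that exploits the hypothesis $K\subseteq F$, which forces every division field $F(E[N])$ to be abelian over $F$. Concretely: a family $\{L_i\}_i$ of Galois extensions of $F$ is linearly disjoint over $F$ --- equivalently, the natural map $\operatorname{Gal}(\prod_iL_i/F)\to\prod_i\operatorname{Gal}(L_i/F)$ is an isomorphism --- exactly when $L_i\cap\bigl(\prod_{j\neq i}L_j\bigr)=F$ for every $i$. Since the injectivity in \eqref{eq:(4.1)} is automatic from $F(E)_{\text{tors}}=\prod_pF(E[p^\infty])$, proving \eqref{eq:(4.2)} reduces, after bundling the primes of $S$ into $F(E[S^\infty])$, to the identities
\begin{equation*}
	F(E[p^\infty])\cap\Bigl(F(E[S^\infty])\cdot\!\!\prod_{q\notin S\cup\{p\}}\!\!F(E[q^\infty])\Bigr)=F\qquad(p\notin S)
\end{equation*}
together with $F(E[S^\infty])\cap\prod_{p\notin S}F(E[p^\infty])=F$.

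Next I would feed in complex multiplication. Because $K\subseteq F$, the restriction of $\rho_E$ to $\operatorname{Gal}(\overline F/F)$ lands in $\widehat{\mathcal O}^\times$, so each $F(E[p^\infty])/F$ is abelian and, by the main theorem of complex multiplication, its Galois group is described by the algebraic Hecke character $\psi_{E/F}$ of $E/F$, whose conductor is an ideal of $\mathcal O_F$ supported on the bad primes of $E$ and controlled by $f_E$. By the N\'eron--Ogg--Shafarevich criterion, $F(E[p^\infty])/F$ is then unramified outside the primes of $F$ lying above $p$ or above a prime of bad reduction, and the rational primes below the latter divide $N_{F/\mathbb Q}(f_E)\mid B_E$, hence lie in $S$; the primes dividing the order conductor $f$ and those recorded in $\triangle_F$ are likewise in $S$. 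Consequently, for $p\notin S$, the extension $F(E[p^\infty])/F$ is ramified only above $p$ and above the $S$-primes, while each division field entering the compositum on the left-hand side above (all attached to primes $q\neq p$, together with $F(E[S^\infty])$) is unramified above $p$; intersecting, one is left with an abelian extension of $F$ whose ramification is confined to the finitely many $S$-primes. Because $p\nmid\triangle_F$, the $p$-adic ramification of $F(E[p^\infty])/F$ is genuinely new over $F$, and the contributions of the $S$-primes to $F(E[p^\infty])$ are --- through $\psi_{E/F}$ --- independent of $p$ and already contained in $F(E[S^\infty])$; combining these, one forces the intersection down to $F$, and symmetrically for the $F(E[S^\infty])$ identity.

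The genuine obstacle is this last step: pinning down the entanglement at the primes of bad reduction and at the primes ramified in $F/\mathbb Q$. For a CM curve every such prime has potentially good reduction, so the inertia there acts through a finite quotient, and one must show --- using the Lubin--Tate/CM description of the local Galois action together with an exact formula for the conductor of $\psi_{E/F}$ in terms of $f_E$ and $f$ --- that for $p\notin S$ the part of $F(E[p^\infty])$ ramified at such a prime either collapses to $F$ or is already absorbed into $F(E[S^\infty])$, and moreover that no residual everywhere-unramified entanglement survives (this last point being exactly why $\triangle_F$, hence the Hilbert class field of $F$, must appear in $B_E$). Carrying this out uniformly over an arbitrary number field $F\supseteq K$, rather than over $\mathbb Q$ or $K$, is the technical core of \cite{campagna2022entanglement}.
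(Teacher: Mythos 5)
First, note that the paper itself does not prove this statement: it is imported verbatim from Campagna--Pengo and used as a black box, so the only comparison available is with the argument in that source. Your opening moves are sound and do match the shape of that argument: since every field in sight is Galois over $F$, the isomorphism \eqref{eq:(4.2)} is indeed equivalent to the triviality of the intersections $F(E[p^\infty])\cap\bigl(F(E[S^\infty])\cdot\prod_{q\notin S\cup\{p\}}F(E[q^\infty])\bigr)=F$ for $p\notin S$ (injectivity of \eqref{eq:(4.1)} being automatic), the hypothesis $K\subseteq F$ does make each $F(E[p^\infty])/F$ abelian with group inside $(\mathcal{O}\otimes\mathbb{Z}_p)^\times$, and N\'eron--Ogg--Shafarevich correctly confines the ramification of any such intersection to primes above $S$ (the bad-reduction primes lie over divisors of $N_{F/\mathbb{Q}}(f_E)\mid B_E$, and $p\nmid\triangle_F$ for $p\notin S$).

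The gap is that everything after this point is asserted rather than proved, and it is precisely the content of the theorem. An abelian subextension of $F(E[p^\infty])/F$ ramified only at $S$-primes need not be trivial a priori: $F(E[p^\infty])$ genuinely can be ramified at bad-reduction primes (potential good reduction makes the inertia image there finite, not trivial), and an everywhere-unramified piece is not excluded by anything you say --- your sentence that the $S$-prime contributions to $F(E[p^\infty])$ are ``independent of $p$ and already contained in $F(E[S^\infty])$'' is exactly the statement that has to be established, not a tool for establishing it. What is missing is the quantitative step on which the cited proof turns: for $p\nmid B_E$ (so $p$ unramified in $K$, prime to $f$, and of good reduction above $p$) one must show, via the CM/formal-group description of the local action at primes of $F$ above $p$, that the inertia subgroups above $p$ generate so large a subgroup of $\operatorname{Gal}(F(E[p^\infty])/F)\cong(\mathcal{O}\otimes\mathbb{Z}_p)^\times$ that the quotient cannot accommodate the bounded, prime-to-$p$ contributions coming from inertia at the $S$-primes or from unramified extensions; only then does the intersection collapse to $F$. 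You flag this yourself as ``the technical core,'' which is an accurate self-assessment: without it the proposal is a correct roadmap, not a proof. A smaller but real inaccuracy: the parenthetical that the Hilbert class field of $F$ is accounted for ``because $\triangle_F$ appears in $B_E$'' is off --- $\triangle_F$ carries no information about the class group of $F$, and the possible unramified entanglement has to be excluded by the group-theoretic argument just described, not by enlarging $S$.
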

	The isomorphism \ref{eq:(4.2)} does not hold if the set $S$ does not contain all the primes dividing $B_E$. However, we obtain a sufficient condition from the following theorem for which the said isomorphism holds when $S$ is empty.
	\begin{theorem}[Campagna-Pengo, \cite{campagna2022entanglement}, Theorem $1.2$]
		Let $\mathcal{O}$ be an order with conductor $f$ in an imaginary quadratic field $K$, and let $E$ be an elliptic curve with complex multiplication by $\mathcal{O}$ defined over the ring class field $H_\mathcal{O}=K(j(E))$, such that $H_{\mathcal{O}}(E_{tors})\neq K^{ab}$. Then the whole family of $p^{\infty}$-division fields $\{H_{\mathcal{O}}(E[p^{\infty}])\}_p$ runs through the rational primes $p$, is linearly disjoint over $H_{\mathcal{O}}$. Moreover, if the class group $Pic({\mathcal{O}})\neq\{1\}$, there exist infinitely many such elliptic
		curves $E/H_{\mathcal{O}}$, which are nonisomorphic over $H_{\mathcal{O}}$.
	\end{theorem}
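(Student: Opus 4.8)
The plan is to bootstrap from Theorem~\ref{thm:4.1}. That result already yields linear disjointness of $\{H_{\mathcal{O}}(E[p^{\infty}])\}_{p}$ over $H_{\mathcal{O}}$ once one deletes the finite set $S_0$ of primes dividing $B_E$, so it suffices to show that, under the hypothesis $H_{\mathcal{O}}(E_{\mathrm{tors}})\neq K^{ab}$, none of the primes in $S_0$ genuinely entangles with the rest. I would first recast everything in terms of the adelic Galois image: since $E/H_{\mathcal{O}}$ with $H_{\mathcal{O}}=K(j(E))\supseteq K$ has CM by $\mathcal{O}$, each $T_p(E)$ is free of rank one over $\mathcal{O}\otimes\mathbb{Z}_p$, so the Galois action factors through $\rho_{E,p^{\infty}}\colon\operatorname{Gal}(\overline{H_{\mathcal{O}}}/H_{\mathcal{O}})\to(\mathcal{O}\otimes\mathbb{Z}_p)^{\times}$; assembling these gives $\rho_E\colon\operatorname{Gal}(\overline{H_{\mathcal{O}}}/H_{\mathcal{O}})\to\widehat{\mathcal{O}}^{\times}=\prod_p(\mathcal{O}\otimes\mathbb{Z}_p)^{\times}$ with image $\operatorname{Gal}(H_{\mathcal{O}}(E_{\mathrm{tors}})/H_{\mathcal{O}})$. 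Linear disjointness of the family over $H_{\mathcal{O}}$ is exactly the assertion that $\operatorname{image}(\rho_E)=\prod_p\operatorname{image}(\rho_{E,p^{\infty}})$, i.e. that this image is a full direct product.

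Next I would extract the arithmetic constraint coming from complex multiplication. By the main theorem of CM, adjoining to $H_{\mathcal{O}}$ the values of a Weber function at the $N$-torsion, for all $N$, produces the ray class fields of $K$ of every conductor divisible by $f$, and these together generate $K^{ab}$; hence $K^{ab}\subseteq H_{\mathcal{O}}(E_{\mathrm{tors}})$. On the other hand the idelic description of the ring class field gives $\operatorname{Gal}(K^{ab}/H_{\mathcal{O}})\cong\widehat{\mathcal{O}}^{\times}/\Delta(\mathcal{O}^{\times})$, where $\Delta(\mathcal{O}^{\times})$ is the diagonal copy of the finite unit group $\mathcal{O}^{\times}$. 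Restriction then yields a short exact sequence
\[
1\longrightarrow\operatorname{image}(\rho_E)\cap\Delta(\mathcal{O}^{\times})\longrightarrow\operatorname{image}(\rho_E)\longrightarrow\widehat{\mathcal{O}}^{\times}/\Delta(\mathcal{O}^{\times})\longrightarrow 1,
\]
whose left-hand term is $\operatorname{Gal}(H_{\mathcal{O}}(E_{\mathrm{tors}})/K^{ab})$. Thus $H_{\mathcal{O}}(E_{\mathrm{tors}})\neq K^{ab}$ is equivalent to $\operatorname{image}(\rho_E)\cap\Delta(\mathcal{O}^{\times})\neq\{1\}$, and in that case $[\widehat{\mathcal{O}}^{\times}:\operatorname{image}(\rho_E)]=[\Delta(\mathcal{O}^{\times}):\operatorname{image}(\rho_E)\cap\Delta(\mathcal{O}^{\times})]$ is a \emph{proper} divisor of $|\mathcal{O}^{\times}|\in\{2,4,6\}$.

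Now combine the two. By Theorem~\ref{thm:4.1}, $\operatorname{image}(\rho_E)=\operatorname{image}(\rho_E^{(S_0)})\times\prod_{p\notin S_0}\operatorname{image}(\rho_{E,p^{\infty}})$ with each factor for $p\notin S_0$ equal to the full group $(\mathcal{O}\otimes\mathbb{Z}_p)^{\times}$ (cf. \ref{eq:(2.10)}); hence any failure of $\operatorname{image}(\rho_E)$ to be a full direct product occurs among the finitely many primes of $S_0$ and is bounded by the index $[\widehat{\mathcal{O}}^{\times}:\operatorname{image}(\rho_E)]$, a proper divisor of $|\mathcal{O}^{\times}|$. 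When $|\mathcal{O}^{\times}|=2$, which covers every order of conductor $f>1$ and every $K$ with $D\notin\{1,3\}$, this divisor is $1$, so $\operatorname{image}(\rho_E)=\widehat{\mathcal{O}}^{\times}$ is trivially a full product and the theorem follows. In the two remaining cases $\mathcal{O}=\mathbb{Z}[i]$ (so $j=1728$) and $\mathcal{O}=\mathbb{Z}[\zeta_3]$ (so $j=0$) one must still exclude the possibility that a proper nontrivial index ($2$; respectively $2$ or $3$) is carried by genuine entanglement between $p=2$, $p=3$ and the remaining bad primes rather than by a purely local shrinkage of a single $p$-adic image; this is a finite check, using the explicit determination of the adelic image of a CM curve with $j\in\{0,1728\}$, and the point one verifies is that distinct primes can be glued through $\Delta(\mathcal{O}^{\times})$ only when $\operatorname{image}(\rho_E)$ is forced to be a complement of $\Delta(\mathcal{O}^{\times})$ in $\widehat{\mathcal{O}}^{\times}$ — equivalently $\operatorname{image}(\rho_E)\cap\Delta(\mathcal{O}^{\times})=\{1\}$, i.e. $H_{\mathcal{O}}(E_{\mathrm{tors}})=K^{ab}$, the situation we have excluded (these are exactly the finitely many exceptional curves tabulated by Campagna--Pengo). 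Hence $\operatorname{image}(\rho_E)=\prod_p\operatorname{image}(\rho_{E,p^{\infty}})$, which is the claimed linear disjointness.

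For the last assertion, when $\operatorname{Pic}(\mathcal{O})\neq\{1\}$ one has $H_{\mathcal{O}}\supsetneq K$, and infinitely many pairwise non-$H_{\mathcal{O}}$-isomorphic examples with $H_{\mathcal{O}}(E_{\mathrm{tors}})\neq K^{ab}$ are produced by twisting: replacing $E$ by its twist by a class in $H_{\mathcal{O}}^{\times}/(H_{\mathcal{O}}^{\times})^{|\mathcal{O}^{\times}|}$ changes $H_{\mathcal{O}}(E_{\mathrm{tors}})$ by a controlled abelian extension, and since only the finitely many exceptional curves above have $H_{\mathcal{O}}(E_{\mathrm{tors}})=K^{ab}$ while these twist classes are infinite in number, infinitely many twists retain the required property and remain mutually non-isomorphic over $H_{\mathcal{O}}$. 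I expect the real obstacle to be the finite case analysis for $\mathcal{O}=\mathbb{Z}[i]$ and $\mathbb{Z}[\zeta_3]$: making rigorous the slogan that the diagonal unit group $\Delta(\mathcal{O}^{\times})$ is the only mechanism capable of entangling two distinct primes, and checking it against the explicit $2$- and $3$-adic images, while keeping careful track of the ramification data packaged in $B_E$ (the conductor $f$, the discriminant of $H_{\mathcal{O}}$, and the primes of bad reduction) so that Theorem~\ref{thm:4.1} applies with precisely the set $S_0$ one wants. The adelic reformulation and the twisting argument are comparatively routine.
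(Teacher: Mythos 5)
This statement is quoted verbatim from Campagna--Pengo \cite{campagna2022entanglement} (their Theorem 1.2); the paper under review gives no proof of it, so your proposal has to stand on its own. Its first half is sound: over $H_{\mathcal{O}}\supseteq K$ the image $G$ of $\rho_E$ lies in $\widehat{\mathcal{O}}^{\times}$, linear disjointness of $\{H_{\mathcal{O}}(E[p^{\infty}])\}_p$ is exactly $G=\prod_p G_p$, CM theory gives $K^{ab}\subseteq H_{\mathcal{O}}(E_{\mathrm{tors}})$ with $\operatorname{Gal}(K^{ab}/H_{\mathcal{O}})\cong\widehat{\mathcal{O}}^{\times}/\Delta(\mathcal{O}^{\times})$, hence $[\widehat{\mathcal{O}}^{\times}:G]=[\Delta(\mathcal{O}^{\times}):G\cap\Delta(\mathcal{O}^{\times})]$, and the hypothesis $H_{\mathcal{O}}(E_{\mathrm{tors}})\neq K^{ab}$ is indeed equivalent to $G\cap\Delta(\mathcal{O}^{\times})\neq\{1\}$. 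When $|\mathcal{O}^{\times}|=2$ this forces $G=\widehat{\mathcal{O}}^{\times}$ and the conclusion follows, as you say.

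The genuine gap is exactly the step you defer as ``a finite check'': the cases $\mathcal{O}=\mathbb{Z}[i]$ and $\mathcal{O}=\mathbb{Z}[\zeta_3]$. There the hypothesis only forces $G\cap\Delta(\mathcal{O}^{\times})\supseteq\{\pm1\}$, so $[\widehat{\mathcal{O}}^{\times}:G]$ may be $2$ or $3$, and $G$ is then the kernel of a character $\chi$ of $\widehat{\mathcal{O}}^{\times}$ of order $2$ or $3$ which is nontrivial on the diagonal unit of order $4$ or $6$. Nothing in group theory prevents such a $\chi$ from having nontrivial local components at two bad primes (say $2$ and $3$), in which case every projection $G_p$ is full yet $G\neq\prod_p G_p$ --- precisely the entanglement the theorem denies. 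Your slogan that distinct primes can be glued through $\Delta(\mathcal{O}^{\times})$ only when $G$ is a complement of $\Delta(\mathcal{O}^{\times})$ is therefore not a routine verification but the arithmetic heart of the theorem; establishing it requires knowing which such characters actually arise from quartic/sextic twists (the ray-class-field computations Campagna--Pengo carry out), and Theorem \ref{thm:4.1} cannot help because the offending primes all lie in $S_0$. The final ``infinitely many curves'' claim also leans on an unjustified finiteness assertion: the curves with $H_{\mathcal{O}}(E_{\mathrm{tors}})=K^{ab}$ are not the finitely many curves of Table 1 of \cite{campagna2022entanglement} (that table concerns curves over $\mathbb{Q}$ with CM discriminant $<-4$ and lists those whose division fields \emph{are} linearly disjoint over $K$, a different condition, and in general there are infinitely many twists with $H_{\mathcal{O}}(E_{\mathrm{tors}})=K^{ab}$), so you still owe an argument that infinitely many twists over $H_{\mathcal{O}}$ satisfy the hypothesis when $\operatorname{Pic}(\mathcal{O})\neq\{1\}$.
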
 
	\begin{theorem}[Campagna-Pengo, \cite{campagna2022entanglement}, Theorem $1.3$]\label{thm:4.3}
		Let $\mathcal{O}$ be an order with conductor $f$ of discriminant $\triangle_{\mathcal{O}}:=f^2\triangle_{K}<-4$ inside an imaginary quadratic field $K$, and suppose that $Pic(\mathcal{O})=\{1\}$. Let $E/\mathbb{Q}$ be an elliptic curve with complex multiplication by $\mathcal{O}$. Then the family of division fields $\{K(E[p^{\infty}])\}_p$, where $p$ runs over the rational primes $p$, is linearly disjoint over $K$ if and only if $E$ is isomorphic over $K$ to one of the thirty elliptic curves appearing in [\cite{campagna2022entanglement}, Table $1$].
	\end{theorem}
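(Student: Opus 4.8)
The plan is to derive the statement from Theorem~\ref{thm:4.1} (Campagna--Pengo, Theorem~1.1) together with an explicit description, via the main theorem of complex multiplication, of the $p$-adic division fields $K(E[p^{\infty}])$. Note first that $\mathrm{Pic}(\mathcal{O})=\{1\}$ forces $\mathrm{Cl}(K)=\{1\}$, so the Hilbert class field of $K$ is $K$ itself and $j(E)\in\mathbb{Q}$; moreover $\triangle_{\mathcal{O}}<-4$ gives $\mathcal{O}^{\times}=\{\pm1\}$ and $j(E)\neq0,1728$, so every such $E$ is a quadratic twist of a fixed ``canonical'' curve $E_{\mathcal{O}}/\mathbb{Q}$ with CM by $\mathcal{O}$. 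Applying Theorem~\ref{thm:4.1} with $F=K$ and $S$ the set of rational primes dividing $B_{E}=f\triangle_{K}N_{K/\mathbb{Q}}(f_{E})$, the subfamily $\{K(E[p^{\infty}])\}_{p\notin S}$ is automatically linearly disjoint from $K(E[S^{\infty}])$ over $K$; hence the full family is linearly disjoint over $K$ if and only if $\{K(E[p^{\infty}])\}_{p\in S}$ is linearly disjoint over $K$. Since $S$ is finite and explicit, this reduces the problem to a finite verification.

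Next I would make these division fields concrete. Let $\psi_{E}$ be the Hecke character of $K$ attached to $E/K$; then $\psi_{E}=\psi_{\mathcal{O}}^{\mathrm{can}}\cdot\chi_{E}$, with $\chi_{E}$ the finite-order quadratic character cutting out the twisting field, and $K(E[p^{\infty}])$ is the abelian extension of $K$ fixed by the kernel of the $p$-adic avatar of $\psi_{E}$. By the main theorem of complex multiplication the image $\rho_{E}(\operatorname{Gal}(\overline{\mathbb{Q}}/K))\subseteq\widehat{\mathcal{O}}^{\times}=\prod_{p}(\mathcal{O}\otimes\mathbb{Z}_{p})^{\times}$ is cut out by $\psi_{E}$ and has finite index (Theorem~\ref{thm:2.6}), and the family $\{K(E[p^{\infty}])\}_{p}$ is linearly disjoint over $K$ precisely when this image equals the direct product of its projections to the factors $(\mathcal{O}\otimes\mathbb{Z}_{p})^{\times}$. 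Two phenomena obstruct this: (i) the quadratic ``defect'' $[K(E[p^{\infty}]):K(h(E[p^{\infty}]))]\mid|\mathcal{O}^{\times}|=2$ --- of order at most $2$ exactly because $\triangle_{\mathcal{O}}<-4$ --- whose associated quadratic extension of $K$ may ramify at primes dividing the conductor of $E$; and (ii) the bounded ramification of $K(E[p^{\infty}])$ at primes dividing $\mathfrak{f}_{\psi_{E}}$, which can create a common subextension of $K(E[p^{\infty}])$ and $K(E[q^{\infty}])$ for distinct $p,q\in S$ (whereas the ray or ring class field parts $K(h(E[p^{\infty}]))$ of $p$-power conductor are automatically linearly disjoint, their pairwise intersections being the Hilbert class field $K$). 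The family is linearly disjoint over $K$ iff neither (i) nor (ii) produces a collision among the primes in $S$.

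Finally I would run the classification. Parametrizing the curves with CM by $\mathcal{O}$ by their twist class $d$, one computes $B_{E^{(d)}}$ and the data of (i)--(ii) at each bad prime and tests for collisions: for all but finitely many twist classes one exhibits an explicit entanglement --- typically a quadratic field such as $K(i)$, $K(\sqrt{2})$, or $K(\sqrt{\triangle_{K}})$ sitting inside two distinct division fields --- while for the surviving classes one checks linear disjointness directly, and assembling the outcome over the eleven imaginary quadratic orders of class number one with $\triangle_{\mathcal{O}}<-4$ produces exactly the thirty curves of [\cite{campagna2022entanglement}, Table~1]. I expect the main obstacle to be the middle step: pinning down precisely which abelian extension of $K$ is generated by the ``defect'' and by the ramification at $\mathfrak{f}_{\psi_{E}}$ for each curve and each bad prime, which requires careful control of the conductors of Hecke characters under quadratic twisting, and then organizing the finite case analysis so that no exceptional twist is overlooked.
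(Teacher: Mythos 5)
This statement is not proved in the paper at all: it is quoted verbatim from Campagna--Pengo (\cite{campagna2022entanglement}, Theorem 1.3) and used as a black box in Section \ref{sec:5}, so there is no internal proof to compare yours against. Judged on its own terms, your outline does track the general strategy of the cited work (reduce to the finitely many primes dividing $B_E$ via their Theorem 1.1, i.e.\ Theorem \ref{thm:4.1} here; describe $K(E[p^\infty])$ through the Hecke character and the Weber-function ``defect'' of order dividing $|\mathcal{O}^\times|=2$; then classify twists), and the preliminary reductions you make are correct: $\mathrm{Pic}(\mathcal{O})=\{1\}$ does force $h_K=1$ and $j(E)\in\mathbb{Q}$, $\triangle_{\mathcal{O}}<-4$ does give $\mathcal{O}^\times=\{\pm1\}$ and a quadratic-twist parametrization, and linear disjointness of the whole family is indeed equivalent to linear disjointness of $\{K(E[p^\infty])\}_{p\in S}$ over $K$.

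The genuine gap is that the entire ``if and only if'' content of the theorem is asserted rather than proved in your final step. The twist classes form an infinite family, so the claim ``for all but finitely many twist classes one exhibits an explicit entanglement, while for the surviving classes one checks linear disjointness directly'' is exactly the theorem restated; nothing in your sketch shows why the entanglement data of step (i)--(ii) can be computed uniformly in the twisting discriminant, why only finitely many twists survive, or why the survivors are precisely the thirty curves of Table 1. This requires controlling how the conductor of the Hecke character $\psi_{E}$ changes under quadratic twisting and showing that any non-minimal twist creates a common quadratic subfield (such as $K(i)$ or $K(\sqrt{\pm p})$) inside two distinct $p^\infty$-division fields --- this is where Campagna--Pengo's ray-class-field computations (their Theorems 4.6--4.7, Propositions 3.3 and 5.5, and the case analysis of their Section 6) do the real work. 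Without that uniform argument your ``finite verification'' never becomes finite, and the ``only if'' direction in particular is unsupported.
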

	We consider the elliptic curves $E_1^*$ and $E_2^*$ as mentioned in section $1$. Referring to [\cite{campagna2022entanglement}, Table $1$] and their case-by-case analysis of these elliptic curves on page no. $57,59$, and $60$ of [\cite{campagna2022entanglement}, Section $6$], one obtains the following results:\\
	(a) For the elliptic curve $E_1^*$ having CM from the order $\mathcal{O}=\mathbb{Z}[\sqrt{-4}]$ in the imaginary quadratic field $K=\mathbb{Q}(\sqrt{-1})$, it's conductor $|f_{E_1^*}|\in\mathbb{N}$ and in fact, it is a power of $2$ which ramifies in $K$. Then from Theorem \ref{thm:4.1}, one obtains
	\begin{equation}
		\operatorname{Gal}(K(E_1^*)_{tors}/K)\cong\prod_{p}\operatorname{Gal}(K(E_1^*[p^{\infty}])/K).
	\end{equation} 
	where $p$ runs over all rational primes $p\in\mathbb{N}.$ Moreover, from [\cite{campagna2022entanglement}, Proposition $3.3$] it follows that $\operatorname{Gal}(K(E_1^*[p^{m}])/K)\cong(\mathcal{O}/p^m\mathcal{O})^{\times}$, where $p\neq 2$, $m\in\mathbb{N}$. Also, using [\cite{campagna2022entanglement}, Proposition $5.5$] along with [\cite{campagna2022entanglement}, Theorem $6.1$], one obtains $\operatorname{Gal}(K(E_1^*[2^{m}])/K)\cong(\mathcal{O}/2^m\mathcal{O})^{\times}/\{\pm1\}$, where $m\in\mathbb{N}$.\\
	(b) For the elliptic curve $E_2^*$ having CM from the order $\mathcal{O}=\mathbb{Z}[\sqrt{-3}]$ in the imaginary quadratic field $K=\mathbb{Q}(\sqrt{-3})$, we have a different scenario since $|f_{E_2^*}|=2^23^2$ which is not a power of a unique prime. So, one has to study the division fields $K(E_2^*[2^{\infty}])$ and $K(E_2^*[3^{\infty}])$. Then $\rho_{E_2^*,3}$ is not surjective since $K(E_2^*[3])=H_{3,\mathcal{O}}=K(\sqrt[3]{2})$ where $H_{I,\mathcal{O}}$ denotes the ray class field of $K$ modulo a non-zero ideal $I\subset\mathcal{O}$ relative to $\mathcal{O}$ (as defined in [\cite{campagna2022entanglement}, Section $4$, Theorem $4.7$]). Then $\rho_{E_2^*,3^n}$ is not surjective for every $n\in\mathbb{N}$. Moreover, $\rho_{E_2^*,2^n}$ is surjective for every $n\in\mathbb{N}$, since [\cite{campagna2022entanglement}, Theorem $4.6$ and Theorem $4.7$] suggests that
	\begin{equation*}
		|(\mathcal{O}/2^n\mathcal{O})^{\times}|=\frac{[H_{2^n3,\mathcal{O}}:K]}{[H_{3,\mathcal{O}}:K]}=\frac{[H_{2^n3,\mathcal{O}}:K]}{[K(E_2^*[3]):K]}\leq[\frac{[K(E_2^*[2^n3]):K]}{[K(E_2^*[3]):K]}\leq[K(E_2^*[2^n3]):K].
	\end{equation*}
	Then [\cite{campagna2022entanglement}, Lemma $3.2$] shows that every inequality is actually an equality. Then, one obtains $K(E_2^*[2^{n}])$ and $K(E_2^*[3^{m}])=K$ for every $n,m\in\mathbb{N}$. Finally, using Theorem \ref{thm:4.1} and [\cite{campagna2022entanglement}, Proposition $3.3$], we have
	\begin{equation}
		\operatorname{Gal}(K(E_2^*)_{tors}/K)\cong\prod_{p}\operatorname{Gal}(K(E_2^*[p^{\infty}])/K).
	\end{equation} where $p$ runs over all primes $p\in\mathbb{N}$. Moreover, $\operatorname{Gal}(K(E_2^*[p^{m}])/K)\cong(\mathcal{O}/p^m\mathcal{O})^{\times}$, where $p\neq 3$, $m\in\mathbb{N}$, and $\operatorname{Gal}(K(E_2^*[3^{m}])/K)\cong(\mathcal{O}/3^m\mathcal{O})^{\times}/\{\pm1\}$, where $m\in\mathbb{N}$.\\
\\
(c) For all the other isogeny classes of CM elliptic curves $E$ as listed above in section \ref{sec:1}, Campagna and Pengo obtained the isomorphism
\begin{equation}
\operatorname{Gal}(K(E[p^m])/K)\cong\begin{cases}\left(\mathcal{O}/p^m\mathcal{O}\right)^{\times}/{\pm1} & \text{if } p\in S\\
	\left(\mathcal{O}/p^m\mathcal{O}\right)^{\times} & \text{otherwise}
\end{cases}
\end{equation}
for all $m\in\mathbb{N}$. Moreover, one obtains
\begin{equation}
	\operatorname{Gal}(K(E)_{tors}/K)\cong\prod_{p}\operatorname{Gal}(K(E[p^{\infty}])/K).
\end{equation}
	Thus, all of these elliptic curves satisfy Theorem \ref{thm:4.3}.
   \section{\textbf{Final Computations}}\label{sec:5}
	Let us recall the curve $E_1/\mathbb{Q}$
	\begin{equation*} 
		E_1:y^2=x^3+4x,
	\end{equation*}
	and the curve $E_2/\mathbb{Q}$
	\begin{equation*}
		y^2=x^3+1.
	\end{equation*}
	In this section, we want to compute the constant $C_{E,r}$ for the curves $E_1$ and $E_2$ which are isogenous to the curves $E_1^*/\mathbb{Q}$ and $E_2^*/\mathbb{Q}$, respectively. Here,
	\begin{equation*}
		E_1^*: y^2=x^3-11x+14,
	\end{equation*}
	and
	\begin{equation*}
		E_2^*: y^2=x^3-15x+22.
	\end{equation*}
	Thanks to the results mentioned in sections \ref{sec:3} and \ref{sec:4}, we have the necessary tools to compute the explicit images of the corresponding Galois images. We would like to mention that using Theorem \ref{thm:4.1}, one can directly compute $C_{E,r}$ for the curves $E_1$ and $E_2$, since the set $S$ in Theorem \ref{thm:4.1} is a singleton set for these curves. There are curves for which the set $S$ would contain multiple elements, but the structure of such Galois groups is unknown. That's why all of our discussion in this section follows as a consequence of Theorem \ref{thm:4.3}, but this approach is quite restrictive, since not all CM elliptic curves $E/\mathbb{Q}$ satisfy this theorem.
	\subsection{Computing $m_E$}\label{sec:5.1}
	We recall the constant $C_{E,r}$ as described in \ref{eq:(2.11)},
	\begin{equation*}
		C_{E,r}=\frac{m_E}{2\pi}\frac{|(\operatorname{Gal}(K(E[m_E])/K))_r|}{|\operatorname{Gal}(K(E[m_E])/K)|}\prod_{\substack{\ell\nmid m_E\\ \ell\mid r}}\frac{\ell}{\ell-\chi\mathcal{O}(\ell)}\prod_{\substack{\ell\nmid m_E\\ \ell\nmid r}}\frac{\ell^2-(1+\chi\mathcal{O}(\ell))\ell}{(\ell-1)(\ell-\chi\mathcal{O}(\ell))}.
	\end{equation*}
	One notices that to proceed any further we need to know the exact value of $m_E$ for the curves $E_1^*$ and $E_2^*$, which has CM from $K=\mathbb{Q}(\sqrt{-1})$ and $K=\mathbb{Q}(\sqrt{-3})$, respectively. We recall that $m_E$ is the smallest positive integer that satisfies the isomorphism \ref{eq:(2.9)} and $4\ell\mid m_E$ where $\ell$ is a prime ramifying in the order $\mathcal{O}$ of $K$ by which $E$ has CM. Hence, $m_{E_1^*}=4$ and $m_{E_2^*}=12$. Next, we have to verify that these values of $m_{E_1^*}$ and $m_{E_2^*}$ satisfy the isomorphism \ref{eq:(2.9)}.
	\begin{proposition}Let $E_1^*/\mathbb{Q}$ and $E_2^*/\mathbb{Q}$ denote the curves,
		\begin{equation*}
			E_1^*: y^2=x^3-11x+14
		\end{equation*}
		with CM by the order $\mathcal{O}=\mathbb{Z}[\sqrt{-4}]$ of the imaginary quadratic field $K=\mathbb{Q}(\sqrt{-1})$, and
		\begin{equation*}
			E_2^*: y^2=x^3-15x+22.
		\end{equation*}
		with CM by the order $\mathcal{O}=\mathbb{Z}[\sqrt{-3}]$ of the imaginary quadratic field $K=\mathbb{Q}(\sqrt{-3})$, respectively. Then $m_{E_1^*}=4$ and $m_{E_2^*}=12$ satisfies the the respective isomorphisms given by \ref{eq:(2.9)}.
	\end{proposition}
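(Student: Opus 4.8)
The plan is to verify the defining property \ref{eq:(2.9)} of $m_E$ for each curve by exhibiting, for every $N$, the canonical projection $\pi\colon(\mathcal{O}/N\mathcal{O})^{\times}\to(\mathcal{O}/\gcd(N,m_E)\mathcal{O})^{\times}$ and checking that $\operatorname{Gal}(K(E[N])/K)$ is precisely its preimage of $\operatorname{Gal}(K(E[\gcd(N,m_E)])/K)$. Since both $m_{E_1^*}=4$ and $m_{E_2^*}=12$ are already forced to divide any valid level by the requirement $4\ell\mid m_E$ for ramified $\ell$ (here $\ell=2$ for $D=1$, and $\ell\in\{2,3\}$ for $D=3$), the real content is the reverse inequality: that no smaller level works, i.e., that the full Cartan behaviour of $\operatorname{Gal}(K(E[N])/K)$ is already ``visible'' at level $m_E$. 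I would reduce everything to the prime-power description of the Galois images supplied by Section \ref{sec:4}: for $E_1^*$ one has $\operatorname{Gal}(K(E_1^*[2^m])/K)\cong(\mathcal{O}/2^m\mathcal{O})^{\times}/\{\pm1\}$ and $\operatorname{Gal}(K(E_1^*[p^m])/K)\cong(\mathcal{O}/p^m\mathcal{O})^{\times}$ for odd $p$; for $E_2^*$ one has $\operatorname{Gal}(K(E_2^*[3^m])/K)\cong(\mathcal{O}/3^m\mathcal{O})^{\times}/\{\pm1\}$ and $\operatorname{Gal}(K(E_2^*[p^m])/K)\cong(\mathcal{O}/p^m\mathcal{O})^{\times}$ for $p\neq 3$ (in particular the $2$-part is \emph{full}, $\operatorname{Gal}(K(E_2^*[2^m])/K)\cong(\mathcal{O}/2^m\mathcal{O})^{\times}$). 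Combined with the direct-product decomposition $\operatorname{Gal}(K(E)_{tors}/K)\cong\prod_p\operatorname{Gal}(K(E[p^{\infty}])/K)$ that Section \ref{sec:4} establishes for exactly these curves via Theorem \ref{thm:4.3}, this lets me compute $\operatorname{Gal}(K(E[N])/K)$ for composite $N$ factor by factor.

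The concrete steps are as follows. First, for a general $N$ write $N=2^{a}M$ (for $E_1^*$) or $N=2^{a}3^{b}M$ (for $E_2^*$) with $M$ coprime to the relevant ramified primes, and use the linear-disjointness/direct-product isomorphism to write $\operatorname{Gal}(K(E[N])/K)$ as the product of its local factors. Second, observe that the only ``non-full'' local factor is the $2$-part (resp.\ the $3$-part), and that the obstruction there is exactly the $\{\pm1\}$ quotient, which is already detected at $2^{\min(a,2)}$ (resp.\ $3^{\min(b,1)}$) because $-1\equiv 1\pmod{p^{k}\mathcal{O}}$ forces $p^{k}\mid 2$, so the kernel of reduction $(\mathcal{O}/p^{m}\mathcal{O})^{\times}\to(\mathcal{O}/p^{\min(m,v)}\mathcal{O})^{\times}$ contains no element congruent to $-1$ once $v\ge 2$ for $p=2$ (resp.\ $v\ge 1$ for $p=3$); hence projecting to level $\gcd(N,4)$ (resp.\ $\gcd(N,12)$) already recovers whether $\operatorname{Gal}$ is the whole unit group or its index-$2$ subgroup. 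Third, assemble: the preimage under $\pi$ of $\operatorname{Gal}(K(E[\gcd(N,m_E)])/K)$ is, factor by factor, exactly $(\mathcal{O}/p^{m}\mathcal{O})^{\times}$ at the full primes and exactly $(\mathcal{O}/p^{m}\mathcal{O})^{\times}/\{\pm1\}$ at the ramified prime, which is $\operatorname{Gal}(K(E[N])/K)$; this is \ref{eq:(2.9)}. Fourth, rule out any proper divisor of $m_E$: since $4\ell\mid m_E$ is imposed by fiat in the definition, and $4\mid m_{E_1^*}$, $12\mid m_{E_2^*}$ are the smallest multiples of $4$ (resp.\ $4$ and $3$) available, minimality is immediate; one only needs to double-check that $m=4$ (resp.\ $m=12$) genuinely satisfies \ref{eq:(2.9)}, which is precisely steps one through three.

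The main obstacle is the bookkeeping at $p=2$ for $E_2^*$ versus $E_1^*$: the two curves sit on \emph{opposite} sides of the dichotomy in Theorem \ref{thm:3.8}, with $E_1^*$ having $\operatorname{Gal}(K(E_1^*[2^n])/K)\cong(\mathcal{O}/2^n\mathcal{O})^{\times}/\{\pm1\}$ for all $n\ge 3$ but $E_2^*$ having the \emph{full} group $(\mathcal{O}/2^n\mathcal{O})^{\times}$ at $2$ (the index-$2$ collapse for $E_2^*$ occurring at $3$ instead). I must therefore apply Theorem \ref{thm:3.8}(1) to $E_1^*$ — checking its hypotheses $\triangle_K f^2\equiv 0\pmod{16}$ and $\mu_4\subset H_f$, which hold since $f\triangle_K=-4$ so $\triangle_Kf^2=-16$ and $H_f=K=\mathbb{Q}(i)\ni i$ — to conclude that the $2$-adic image stabilizes already at level $4$; this is what pins $m_{E_1^*}=4$ rather than something larger. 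For $E_2^*$, the analogous care is needed at $3$: one must invoke the computation $K(E_2^*[3])=K(\sqrt[3]{2})$ from Section \ref{sec:4} to see that the $3$-part is non-full and that the defect is already exhausted at $3^1$, while confirming the $2$-part is full so that no $2$-power beyond $4$ is needed for that prime — the factor $4$ in $m_{E_2^*}=12$ coming only from the blanket requirement $4\mid m_E$. Once these two local analyses are in hand, the proof is a routine splicing of the product decomposition with the level-lowering maps.
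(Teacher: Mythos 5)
Your proposal is correct and follows essentially the same route as the paper: reduce to the prime-power level at the ramified prime ($2$ for $E_1^*$, $3$ for $E_2^*$) via the CRT/linear-disjointness decomposition from Section \ref{sec:4}, then use the index-$2$ description $\operatorname{Gal}(K(E[\ell^m])/K)\cong(\mathcal{O}/\ell^m\mathcal{O})^{\times}/\{\pm1\}$ together with the compatibility of the reduction maps to identify the Galois image with the preimage of the level-$m_E$ group. The paper phrases the final identification as a containment of two index-$2$ subgroups (via the First Isomorphism Theorem) rather than your $\pm1$-kernel remark, but the substance and the inputs from Sections \ref{sec:3} and \ref{sec:4} are the same.
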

	\begin{proof}
		For the elliptic curve $E_1^*$, let $n=2^{\alpha}\times\beta$ and $\beta$ is odd. Consider the natural group homomorphism
		\begin{equation}
			\pi:(\mathcal{O}/n\mathcal{O})^{\times}\to(\mathcal{O}/\gcd(n,4)\mathcal{O})^{\times},
		\end{equation}
		which is surjective. For $\alpha=0$, the isomorphism 
		\begin{equation}
			\operatorname{Gal}(K(E_1^*[\beta])/K)\cong\pi^{-1}(\operatorname{Gal}(K(E_1^*[1])/K)),
		\end{equation}
		follows from \ref{eq:(2.10)}. We also note that, $(\mathcal{O}/2^{\alpha}\beta\mathcal{O})^{\times}\cong(\mathcal{O}/2^{\alpha}\mathcal{O})^{\times}\times(\mathcal{O}/\beta\mathcal{O})^{\times}$. Then, for $\alpha\ge1$, it suffices to check if the isomorphism \ref{eq:(2.9)} holds for $n=2^{\alpha}$. We further note, that when $\alpha=1$, $\gcd(n,4)=\gcd(2,4)=2$, and the isomorphism $(2.9)$ follows trivially. Let us now consider the group homomorphism when $\alpha>1$, given by
		\begin{equation}
			\pi:(\mathcal{O}/2^{\alpha}\mathcal{O})^{\times}\to(\mathcal{O}/4\mathcal{O})^{\times}.
		\end{equation}
		We learnt in section $4$, that $\operatorname{Gal}(K(E_1^*[2^{\alpha}])/K)\cong(\mathcal{O}/2^{\alpha}\mathcal{O})^{\times}/\{\pm1\}, \, \forall\alpha\ge3$  and we have the natural projection from $(\mathcal{O}/2^{\alpha}\mathcal{O})^{\times}$ to $(\mathcal{O}/2^{\alpha}\mathcal{O})^{\times}/\{\pm1\}$. Moreover, $\pi$ induces a surjective group homomorphism between $(\mathcal{O}/2^{\alpha}\mathcal{O})^{\times}/\{\pm1\}$ to $(\mathcal{O}/4\mathcal{O})^{\times}/\{\pm1\}$. $\pi$ also induces a surjective group homomorphism from $C=\operatorname{Gal}(K(E_1^*[2^{\alpha}])/K)$ to $D=\operatorname{Gal}(K(E_1^*[4])/K)$,and the corresponding diagram of maps is commutative. Let $A=(\mathcal{O}/2^{\alpha}\mathcal{O})^{\times}$ and $B=(\mathcal{O}/4\mathcal{O})^{\times}$. Then, $\operatorname{Gal}(K(E_1^*[2^{\alpha}])/K)$ and $\pi^{-1}(\operatorname{Gal}(K(E_1^*[4])/K))$ both above have index $2$ in $(\mathcal{O}/2^{\alpha}\mathcal{O})^{\times}$. Indeed, if we consider the canonical projection $p_r:B\to B/D$, then it is surjective and the kernel of the composition $p_r\circ\pi$ is $\pi^{-1}(D)$. Then by the First Isomorphism Theorem, $A/\pi^{-1}(D)\cong B/D$. Moreover, $\operatorname{Gal}(K(E_1^*[2^{\alpha}])/K)\leq\pi^{-1}(\operatorname{Gal}(K(E_1^*[4])/K))$. Thus,
		$$\operatorname{Gal}(K(E_1^*[2^{\alpha}])/K)=\pi^{-1}(\operatorname{Gal}(K(E_1^*[4])/K)).$$
		Now, for the elliptic curve $E_2^*$, $n=2^{\alpha}3^{\beta}\gamma$, where $6\nmid\gamma$, and we assume $m_{E_2^*}=12$. When $\beta=0$, the isomorphism \ref{eq:(2.9)} follows trivially. When $\beta\ge1$, it suffices to verify the isomorphism for $n=3^{\beta}$. The verification is similar to what we did for $E_1^*$. Thus,
		$$\operatorname{Gal}(K(E_2^*[3^{\alpha}])/K)=\pi^{-1}(\operatorname{Gal}(K(E_2^*[3])/K)),$$ where $	\pi:(\mathcal{O}/3^{\alpha}\mathcal{O})^{\times}\to(\mathcal{O}/3\mathcal{O})^{\times}$ is a group homomorphism.
	\end{proof}
	\subsection{Computing the explicit Galois groups}\label{sec:5.2}
	In this section, we will use the results mentioned in sections \ref{sec:3} and \ref{sec:4} to determine the explicit groups $\operatorname{Gal}(K(E_1^*[4])/K)$ as a subgroup of $\operatorname{GL}_{2}(\mathbb{Z}/4\mathbb{Z})$ (for $K=\mathbb{Q}(\sqrt{-1}))$ and $\operatorname{Gal}(K(E_2^*[12])/K)$ as a subgroup of $\operatorname{GL}_{2}(\mathbb{Z}/12\mathbb{Z})$ (for $K=\mathbb{Q}(\sqrt{-3}))$.\\
	\\From our discussion in section \ref{sec:4}, page $12$, part $(a)$, and Theorem \ref{thm:3.8}, we know that $$\operatorname{Gal}(K(E_1^*[4])/K)\cong(\mathcal{O}/4\mathcal{O})^{\times}/\{\pm1\},$$ and from [\cite{lozano2022galois}, Theorem $9.1(3)$], we deduce that $$\operatorname{Gal}(K(E_1^*[4])/K)\leq (\mathcal{O}/4\mathcal{O})^{\times}\cong\mathbb{Z}/2\mathbb{Z}\times\mathbb{Z}/4\mathbb{Z}.$$ Then, $\operatorname{Gal}(K(E_1^*[4])/K)$ has order $4$ and index $2$ in
	$(\mathcal{O}/4\mathcal{O})^{\times}$, given by
	\begin{align*}
		(\mathcal{O}/4\mathcal{O})^{\times}=\biggl\{\begin{pmatrix}1 & 0\\ 
			0 & 1\end{pmatrix},\, \begin{pmatrix}1 & 1\\ 
			0 & 1\end{pmatrix},\, \begin{pmatrix}1 & 2\\ 
			0 & 1\end{pmatrix},\, \begin{pmatrix}1 & 3\\ 
			0 & 1\end{pmatrix},\, \begin{pmatrix}3 & 0\\ 
			0 & 3\end{pmatrix},\, \begin{pmatrix}3 & 1\\ 
			0 & 3\end{pmatrix},\, \begin{pmatrix}3 & 2\\ 
			0 & 3\end{pmatrix},\\
		\begin{pmatrix}3 & 3\\ 
			0 & 3\end{pmatrix}\biggr\}
	\end{align*}
	Moreover, any subgroup of index $2$ contains the squared elements, i.e., if $H\leq G$, and $H$ has index $2$ in $G$, then $g^2\in H$, $\forall g\in G$. After computation, we find that the squared elements of $(\mathcal{O}/4\mathcal{O})^{\times}$ are $\begin{pmatrix}1 & 0\\ 
		0 & 1\end{pmatrix},\, \begin{pmatrix}1 & 2\\ 
		0 & 1\end{pmatrix}$.\\
	\\Then using Theorem \ref{thm:3.8} in section \ref{sec:3}, for $H_f=K$ where $f=2$ and example $9.4(1)$ in \cite{lozano2022galois}, it follows that $$\operatorname{Gal}(K(E_1^*[4])/K)=\biggl<\begin{pmatrix}3 & 3\\ 
		0 & 3\end{pmatrix}\biggr>.$$
	\\Next, we want to find the group $\operatorname{Gal}(K(E_2^*[12])/K)$ as a subgroup of $\operatorname{GL}_{2}(\mathbb{Z}/12\mathbb{Z})$. From Theorem \ref{thm:4.3}, it follows that,
	\begin{equation}
		\operatorname{Gal}(K(E_2^*[4])/K)\times\operatorname{Gal}(K(E_2^*[3])/K)\cong\operatorname{Gal}(K(E_2^*[12])/K).
	\end{equation}
	We know that there exists an isomorphism of rings,
	\begin{equation*}
		\theta:\mathbb{Z}/m\mathbb{Z}\times\mathbb{Z}/n\mathbb{Z}\to\mathbb{Z}/mn\mathbb{Z},
	\end{equation*}
	such that the generator $(1,1)\in\mathbb{Z}/m\mathbb{Z}\times\mathbb{Z}/n\mathbb{Z}$ maps to $1\in\mathbb{Z}/mn\mathbb{Z}$.
	Let $A=(a_{ij})_{1\leq i,j\leq k}\in\operatorname{GL}_{k}(\mathbb{Z}/m\mathbb{Z})$ and $B=(b_{ij})_{1\leq i,j\leq k}\in\operatorname{GL}_{k}(\mathbb{Z}/n\mathbb{Z})$. Then, we define
	\begin{align*}
		\phi:\operatorname{GL}_{k}(\mathbb{Z}/m\mathbb{Z})\times\operatorname{GL}_{k}(\mathbb{Z}/n\mathbb{Z})\to\operatorname{GL}_{k}(\mathbb{Z}/mn\mathbb{Z})\\
		(A,B)\mapsto(\theta(a_{ij},b_{ij}))_{1\leq i,j\leq k}.
	\end{align*}
	Then $\phi$ is an isomorphism of groups. Next, we need to find the groups $$\operatorname{Gal}(K(E_2^*[4])/K)\leq\operatorname{GL}_{2}(\mathbb{Z}/4\mathbb{Z}),$$ and $$\operatorname{Gal}(K(E_2^*[3])/K)\leq\operatorname{GL}_{2}(\mathbb{Z}/3\mathbb{Z}).$$ From section \ref{sec:4}, we know that $$\operatorname{Gal}(K(E_2^*[4])/K)\cong(\mathcal{O}/4\mathcal{O})^{\times},$$ and $$\operatorname{Gal}(K(E_2^*[3])/K)\cong(\mathcal{O}/3\mathcal{O})^{\times}\{\pm1\}.$$ Further, $\operatorname{Gal}(K(E_2^*[4])/K)\leq(\mathcal{O}/4\mathcal{O})^{\times}$. So, $$\operatorname{Gal}(K(E_2^*[4])/K)=(\mathcal{O}/4\mathcal{O})^{\times}.$$ From [\cite{lozano2022galois}, Theorem $9.1(1)(c)$], we deduce that $$(\mathcal{O}/4\mathcal{O})^{\times}\cong\mathbb{Z}/2\mathbb{Z}\times\mathbb{Z}/2\mathbb{Z}\times\mathbb{Z}/2\mathbb{Z}.$$ So, $\operatorname{Gal}(K(E_2^*[4])/K)$ is a group of order $8$. We further deduce that, $$(\mathcal{O}/4\mathcal{O})^{\times}=\biggl\{\begin{pmatrix}a & b\\b & a\end{pmatrix}: a,b\in\mathbb{Z}/4\mathbb{Z}, \, a^2-b^2\pmod4\in\{1,3\}\biggr\}\leq\operatorname{GL}_{2}(\mathbb{Z}/4\mathbb{Z}).$$
	Then, we obtain,
	\begin{align*}
		\operatorname{Gal}(K(E_2^*[4])/K)=\biggl\{\begin{pmatrix}1 & 0\\ 
			0 & 1\end{pmatrix},\, \begin{pmatrix}1 & 2\\ 
			2 & 1\end{pmatrix},\, \begin{pmatrix}2 & 1\\ 
			2 & 1\end{pmatrix},\, \begin{pmatrix}2 & 3\\ 
			3 & 2\end{pmatrix},\, \begin{pmatrix}3 & 0\\ 
			0 & 3\end{pmatrix},\, \begin{pmatrix}3 & 2\\ 
			2 & 3\end{pmatrix},\, \begin{pmatrix}0 & 1\\ 
			1 & 0\end{pmatrix},\\
		\begin{pmatrix}0 & 3\\ 
			3 & 0\end{pmatrix}\biggr\}.
	\end{align*}
	Now, the group $\operatorname{Gal}(K(E_2^*[3])/K)$ has index $2$ in $(\mathcal{O}/3\mathcal{O})^{\times}$. From [\cite{lozano2022galois}, Lemma $8.2(1)$], it follows that $(\mathcal{O}/3\mathcal{O})^{\times}\cong\mathbb{Z}/6\mathbb{Z}$, and 
	$$(\mathcal{O}/3\mathcal{O})^{\times}=\biggl\{\begin{pmatrix}a & b\\0 & a\end{pmatrix},\, a\in\{1,2\},\, b\in\{0,1,2\}\biggr\}\leq\operatorname{GL}_{2}(\mathbb{Z}/3\mathbb{Z}).$$ If we compute the squares of the elements of the group above then we obtain,
	$$\operatorname{Gal}(K(E_2^*[3])/K)=\biggl<\begin{pmatrix}1 & 1\\ 
		0 & 1\end{pmatrix}\biggr>.$$
	One can verify this by observing that, $\operatorname{Gal}(K(E_2^*[3])/K)=G_{E_2^*,K,3}=\rho_{E_2^*,3}(\operatorname{Gal}(\overline{H_f}/H_f))$, where $H_f=K$ and $f=2$ in Theorem $3.7$. Finally, we need to compute the group $\operatorname{Gal}(K(E_2^*[12])/K)$. In fact, we only need to know the diagonal entries to compute the trace of the matrices in $\operatorname{Gal}(K(E_2^*[12])/K)$. So, the diagonal entries are $\theta(0,1)=4,\theta(1,1)=1,\theta(2,1)=10,\theta(3,1)=7$. Then, we obtain
	$|(\operatorname{Gal}(K(E_1^*[4])/K))_r|=|\operatorname{Gal}(K(E_1^*[4])/K)|$ for any $r\equiv 2\pmod4$ and $|(\operatorname{Gal}(K(E_1^*[4])/K))_r|=0$, otherwise. Similarly, we obtain $|(\operatorname{Gal}(K(E_2^*[12])/K))_r|=\frac{1}{2}|\operatorname{Gal}(K(E_2^*[12])/K)|$ for any $r\equiv 2,8\pmod{12}$ and\\ $|(\operatorname{Gal}(K(E_2^*[12])/K))_r|=0$, otherwise. Then from \ref{eq:(2.12)} for primes $p$, we obtain
	\begin{equation}\label{eq:(5.5)}
		C_{E_1^*,r}=\begin{cases}
			\frac{2}{\pi}\prod\limits_{\substack{p\nmid2\\ p\mid r}}\frac{p}{p-\left(\frac{-1}{p}\right)}\prod\limits_{\substack{p\nmid2\\p\nmid r}}\left(1-\frac{\left(\frac{-1}{p}\right)}{(p-1)\left(p-\left(\frac{-1}{p}\right)\right)}\right), & r\equiv2\pmod4\\
			0 & \text{otherwise,}
		\end{cases}
	\end{equation}
	and
	\begin{equation}\label{eq:(5.6)}
		C_{E_2^*,r}=\begin{cases}
			\frac{6}{\pi}\prod\limits_{\substack{p\nmid6\\ p\mid r}}\frac{p}{p-\left(\frac{-3}{p}\right)}\prod\limits_{\substack{p\nmid6\\p\nmid r}}\left(1-\frac{\left(\frac{-3}{p}\right)}{(p-1)\left(p-\left(\frac{-3}{p}\right)\right)}\right), & r\equiv2,8\pmod{12}\\
			0 & \text{otherwise.}
		\end{cases}
	\end{equation}
	where $\left(\frac{.}{p}\right)$ denotes the Legendre symbol for an odd prime $p$. Moreover, from Theorem \ref{thm:2.1} and Theorem \ref{thm:2.2}, we obtain
	\begin{equation}\label{eq:(5.7)}
		\overline{\omega}_{E_1,r}=\begin{cases}
			\frac{1}{2}\prod\limits_{p\nmid2r}\left(1-\frac{\left(\frac{-1}{p}\right)}{p-1}\right), & r\equiv2\pmod{4}\\
			0 & \text{otherwise.}
		\end{cases}
	\end{equation}
	and
	\begin{equation}\label{eq:(5.8)}
		\overline{\omega}_{E_2,r}=\begin{cases}
			\frac{\sqrt{3}}{2}\prod\limits_{p\nmid2r}\left(1-\frac{\left(\frac{-3}{p}\right)}{p-1}\right), & r\equiv2\pmod{6}\\
			0 & \text{otherwise.}
		\end{cases},
	\end{equation}
	respectively. A natural question is whether this infinite products in \ref{eq:(5.7)} and \ref{eq:(5.8)} converge. We observe that the infinite product over primes can be written as $\prod\limits_{p}\left(1-\frac{\chi(p)}{p-1}\right)=\prod\limits_{p}\left(1-\frac{\chi(p)}{p}\right)\left(\frac{1-\frac{\chi(p)}{p-1}}{1-\frac{\chi(p)}{p}}\right)=\frac{1}{L(1,\chi)}\prod\limits_{p}\left(\frac{1-\frac{\chi(p)}{p-1}}{1-\frac{\chi(p)}{p}}\right)$, where $\chi$ is a non-principal Dirichlet character. This is true because the Euler product of $L(1,\chi)$ defined as $(1-\chi(p)p^{-1})^{-1}$ is convergent after writing the product in increasing order of primes and the terms of other infinite product on the right hand side is of the form $1+O\left(\frac{1}{p^2}\right)$, hence it is convergent as well. So, it remains to be verified if $C_{E_1^*,r}=\omega_{E_1,r}$ and $C_{E_2^*,r}=\omega_{E_2,r}$.
	To this end, we observe, that other than $r\equiv2\pmod4$, $C_{E_1^*,r}=0=\omega_{E_1,r}$, and other than $r\equiv2,8\pmod{12}$, $C_{E_2^*,r}=0=\omega_{E_2,r}$. Then, we only need to verify the equality when $r\equiv2\pmod4$ for $E_1,E_1^*$ and $r\equiv2,8\pmod{12}$ for $E_2,E_2^*$. So, we observe that for any $r\equiv2\pmod4$, we can rewrite $C_{E_1^*,r}$ in \ref{eq:(5.5)} as
	\begin{align*}
		C_{E_1^*,r}=\frac{2}{\pi}\prod\limits_{\substack{p\nmid2\\ p\mid r}}\frac{p}{p-\left(\frac{-1}{p}\right)}\prod\limits_{\substack{p\nmid2\\p\nmid r}}\left(1-\frac{\left(\frac{-1}{p}\right)}{(p-1)\left(p-\left(\frac{-1}{p}\right)\right)}\right)\\
		=\frac{2}{\pi}L(1,\chi_4)\prod\limits_{\substack{p\nmid2\\p\nmid r}}\frac{p-\left(\frac{-1}{p}\right)}{p}\prod\limits_{\substack{p\nmid2\\p\nmid r}}\left(1-\frac{\left(\frac{-1}{p}\right)}{(p-1)\left(p-\left(\frac{-1}{p}\right)\right)}\right)\\
		=\frac{2}{\pi}L(1,\chi_4)\prod\limits_{p\nmid2r}\left(1-\frac{\left(\frac{-1}{p}\right)}{p-1}\right)\\
		=\frac{2}{\pi}L(1,\chi_4)2\overline{\omega}_{E_1,r}\\
		=\frac{2}{\pi}\frac{\pi}{4}2\overline{\omega}_{E_1,r}\\
		=\overline{\omega}_{E_1,r},
	\end{align*}
	where $L(1,\chi_4)=\prod\limits_{p}\left(1-\frac{\chi_4}{p}\right)^{-1}=\frac{\pi}{4}$ is the Dirichlet L-function $(L(s,\chi_4))$ of non-principal character modulo $4$,  $\chi_4$ at $s=1$. Similarly, for any $r\equiv2,8\pmod{12}$, we can rewrite $C_{E_2^*,r}$ in \ref{eq:(5.6)} as
	\begin{align*}
		C_{E_2^*,r}=\frac{6}{\pi}\frac{1}{2}\prod\limits_{\substack{p\nmid6\\ p\mid r}}\frac{p}{p-\left(\frac{-3}{p}\right)}\prod\limits_{\substack{p\nmid6\\p\nmid r}}\left(1-\frac{\left(\frac{-3}{p}\right)}{(p-1)\left(p-\left(\frac{-3}{p}\right)\right)}\right)\\
		=\frac{3}{\pi}\frac{3}{2}L(1,\chi_3)\prod\limits_{\substack{p\nmid6\\ p\nmid r}}\frac{p-\left(\frac{-3}{p}\right)}{p}\prod\limits_{\substack{p\nmid6\\p\nmid r}}\left(1-\frac{\left(\frac{-3}{p}\right)}{(p-1)\left(p-\left(\frac{-3}{p}\right)\right)}\right)\\
		=\frac{3}{\pi}\frac{3}{2}L(1,\chi_3)\prod\limits_{p\nmid2r}\left(1-\frac{\left(\frac{-3}{p}\right)}{p-1}\right)\\
		=\frac{3}{\pi}\frac{3}{2}L(1,\chi_3)\frac{2}{\sqrt{3}}\overline{\omega}_{E_2,r}\\
		=\frac{3}{\pi}\frac{3}{2}\frac{\pi}{3\sqrt{3}}\frac{2}{\sqrt{3}}\overline{\omega}_{E_2,r}\\
		=\overline{\omega}_{E_2,r},
	\end{align*}
	where $L(1,\chi_3)=\prod\limits_{p}\left(1-\frac{\chi_3}{p}\right)^{-1}=\frac{\pi}{3\sqrt{3}}$ is the Dirichlet L-function  of non-principal character modulo $3$, $\chi_3$ at $s=1$.\\
	\\
	Let us define the following:\\
	\\
	i) $\Gamma:=\{r\in\mathbb{Z}\mid\operatorname{tr} a\equiv r\pmod{m_E}, \, a\in\operatorname{Gal}(K(E[m_E])/K)\leq\operatorname{GL}_{2}(\mathbb{Z}/m_E\mathbb{Z})\}$.\\
	\\ii) $\kappa_{\text{odd}}:=\frac{|(\operatorname{Gal}(K(E[m_E])/K))_r|}{|\operatorname{Gal}(K(E[m_E])/K)|}$ for odd $r\in\Gamma$, and $\kappa_{\text{even}}:=\frac{|(\operatorname{Gal}(K(E[m_E])/K))_r|}{|\operatorname{Gal}(K(E[m_E])/K)|}$ for even $r\in\Gamma$.\\
	\\
	One can observe that for all the elliptic curves mentioned below in Table $1$, $g=1$ in \ref{eq:(2.5)}. So, the corresponding $\lambda,\mu$ are $0$ and $g_1=1$. Moreover, $\Omega_2(D;1,r)=1$ and $\left(\frac{2r}{D}\right)=-1$ for $r\notin\Gamma$, $D\nmid r$, and  $\left(\frac{2r}{D}\right)=0$ for $D\mid r$. We would also like to note that for a non-prinicipal Dirichlet character modulo $D$ taking values $\pm1$, $L(1,\chi)=\frac{\pi}{\sqrt{D}}$, for the values of $D$ in Table $1$.\\
	\\ 
	Then for $r\in\Gamma$, we replicate the methods as above to obtain the following table:
		\begin{table}[!h]
			\begin{center}
				\begin{tabular}{|c|c|c|c|c|c|c|c|c|} 
		\hline
		$E$ & $D$ & $m_E$ & $\kappa_{\text{odd}}$ & $\kappa_{\text{even}}$ & $\left(\frac{2r}{D}\right)$ & $\xi(D,r)$, $2\nmid r$ & $\xi(D,r)$, $2\mid r$  & $\xi_D(1,r)$ \\
		\hline
		$E_3$  & $7$  & $28$ & $0$ & $\frac{1}{6}$ & $1$ & $0$ & $1$ & $1$ \\
		\hline
		$E_4$  & $11$  & $44$ & $\frac{1}{15}$ & $\frac{1}{30}$ & $1$ & $2$ & $1$ & $1$ \\
		\hline
		$E_5$ & $19$ & $76$ & $\frac{1}{27}$ & $\frac{1}{54}$ & $1$ & $2$ & $1$ & $1$ \\
		\hline
		$E_6$ & $43$ & $172$ & $\frac{1}{63}$ & $\frac{1}{126}$ & $1$ & $2$ & $1$ & $1$ \\
		\hline
		$E_7$ & $67$ & $268$ & $\frac{1}{99}$ & $\frac{1}{198}$ & $1$ & $2$ & $1$ & $1$ \\
		\hline
		$E_8$ & $163$ & $652$ & $\frac{1}{243}$ & $\frac{1}{486}$ & $1$ & $2$ & $1$ & $1$ \\
		\hline
	\end{tabular}
	\caption*{Table 1}
    \end{center}
    \end{table}
	\\Then, for each of the CM elliptic curves $E$ in Table $1$, plugging in the corresponding values in the formula of $C_{E,r}$ and $\overline{\omega}_{E,r}$, we obtain the desired equality of the two constants. 
	\section{Concluding Remarks}
	We have thus shown that the constant $C_{E,r}$ in the asymptotics of the Lang-Trotter Conjecture \ref{con:1.1} is equal to the constant $\overline{\omega}_{E,r}$ in Theorem \ref{thm:1.4} for the $20$ CM elliptic curves as mentioned in section \ref{sec:1}. Thus confirming Conjecture \ref{con:1.5} for these isogeny classes of elliptic curves. Moreover, it proves that for these CM elliptic curves the Lang-Trotter Conjecture \ref{con:1.1} is equivalent to the Hardy-Littlewood Conjecture \ref{con:1.2} which extends the work of Qin \cite{qin2021} where the same has been proved for CM elliptic curves of the form $y^2=x^3+Dx$, $D\in\mathbb{Z}$. This helps us to compute the much harder Lang-Trotter constant in terms of the explicit constant given in \cite{wan2021langtrotter}, at least for the elliptic curves as discussed above. A possible extension of this work would significantly depend on finding a description of the Galois group $\operatorname{Gal}(F(E[S^{\infty}])/F)$ in Theorem \ref{thm:4.1} as a subgroup of a general linear group $\operatorname{GL}_{2}(\mathbb{Z}/n\mathbb{Z})$, where the $n$ depends on $m_E$ as its divisor. For this, one can try to work on the description of the Galois groups given by the $m_E$-division field of the CM elliptic curves obtained by twisting the elliptic curves in Table 1 \cite{campagna2022entanglement} as mentioned in section $6$\cite{campagna2022entanglement}, as a subgroup of $\operatorname{GL}_{2}(\mathbb{Z}/m_E\mathbb{Z})$. Lastly, for the CM elliptic curves with CM discriminant $-8$ as mentioned in Table $1$\cite{campagna2022entanglement}, it is not possible to figure out which among $J_1^{\prime}$ and $J_2^{\prime}$ in Theorem \ref{thm:3.8}$(2)$ would be the image of $\rho_{E,8}(\operatorname{Gal}(\overline{\mathbb{Q}}/K))$. But one should be able to show the equality of the two constants using some computational tools as developed by \cite{rouse_sutherland_zureick-brown_2022} to compute the Galois group $\operatorname{Gal}(K(E[8])/K)$ and replicating our methods as shown in section \ref{sec:5.2}. This would essentially prove Conjecture \ref{con:1.5} for $24$ CM elliptic curves.
	\bibliographystyle{apa}
	\bibliography{References}
	\vspace{0.2in}
	\texttt{Anish Ray, Department of Mathematics and Computer Science, University of M\"{u}nster, Germany}\\
	\texttt{Email id: }\url{anishray49@gmail.com}
\end{document}